\newtheorem {theorem} {Theorem}
\newtheorem {proposition} [theorem]{Proposition}
\newtheorem {corollary} [theorem]{Corollary}
\newtheorem {lemma}  [theorem]{Lemma}
\newtheorem {remark} [theorem]{Remark}
\newcolumntype{L}{>{\displaystyle}l}
\newcolumntype{C}{>{\displaystyle}c}
\newcolumntype{R}{>{\displaystyle}r}
\newcommand{\R}{\ensuremath{\mathbb{R}}}
\newcommand{\N}{\ensuremath{\mathbb{N}}}
\newcommand{\Z}{\ensuremath{\mathbb{Z}}}
\newcommand{\CO}{\ensuremath{\mathcal{O}}}
\newcommand{\CZ}{\ensuremath{\mathcal{Z}}}
\newcommand{\ov}{\overline}
\newcommand{\la}{\lambda}
\newcommand{\G}{\Gamma}
\newcommand{\T}{\theta}
\newcommand{\f}{\varphi}
\newcommand{\al}{\alpha}
\newcommand{\s}{\ensuremath{\mathbb{S}}}
\newcommand{\C}{\ensuremath{\mathcal{C}}}
\newcommand{\X}{\ensuremath{\mathcal{X}}}
\newcommand{\Y}{\ensuremath{\mathcal{Y}}}
\newcommand{\x}{\mathbf{x}}
\newcommand{\z}{\mathbf{z}}
\newcommand{\dint}{\displaystyle\int}
\def\p{\partial}
\def\e{\varepsilon}
\def\dis{\displaystyle}
\def\l{\omega}
\begin{document}

\title[Limit cycles of piecewise polynomial differential systems]
{Limit cycles of piecewise polynomial\\
perturbations of higher dimensional\\ linear differential systems}

\author[J. Llibre, D.D. Novaes and I.O. Zeli ]
{Jaume Llibre$^1$, Douglas D. Novaes$^{2}$ and Iris O. Zeli$^{3}$}

\address{$^1$ Departament de Matem\`{a}tiques,
Universitat Aut\`{o}noma de Barcelona (Uab), 08193 Bellaterra,
Barcelona, Catalonia, Spain} \email{jllibre@mat.uab.cat}

\address{$^2$ Departamento de Matem\'{a}tica, Universidade
Estadual de Campinas, Rua S\'{e}rgio Buarque de Holanda, 651, Cidade
Universit\'{a}ria Zeferino Vaz, 13083--859, Campinas, SP, Brazil}
 \email{ddnovaes@ime.unicamp.br}
 
 \address{$^2$ Departamento de Matem\'{a}tica, Divis\~ao de Ci\^encias Fundamentais, Instituto Tecnol\'ogico de Aeron\'autica, Pra\c{c}a Marechal Eduardo gomes, 50, Vila das Ac\'acias, 12228--900, S\~ao Jos\'e dos Campos, SP, Brazil}
 \email{iriszeli@ita.br}

\subjclass[2010]{34A36, 34C25, 34C29, 37G15.}

\keywords{limit cycle, averaging method, periodic orbit, polynomial
differential system, nonsmooth polynomial differential systems,
nonsmooth dynamical system, Filippov system.}

\maketitle


\begin{abstract}

The averaging theory has been extensively employed for studying periodic solutions of smooth and nonsmooth differential systems. Here, we extend the averaging theory for
studying periodic solutions a class of regularly perturbed non--autonomous
$n$-dimensional discontinuous piecewise smooth differential
system. As a fundamental hypothesis, it is assumed that the unperturbed system has a manifold $\CZ\subset\R^n$ of periodic solutions satisfying $\dim(\mathcal{Z})<n.$ Then, we apply this result to study limit cycles bifurcating from periodic solutions of
linear differential systems, $x'=Mx$, when they are perturbed inside
a class of discontinuous piecewise polynomial differential systems
with two zones. More precisely, we study the periodic solutions of
the following differential system
\[
x'=Mx+ \e F_1^n(x)+\e^2F_2^n(x),
\]
in $\R^{d+2}$ where $\e$ is a small parameter, $M$ is a
$(d+2)\times(d+2)$ matrix having one pair of pure imaginary
conjugate eigenvalues, $m$ zeros eigenvalues, and $d-m$ non--zero
real eigenvalues.
\end{abstract}

\section{Introduction}

The analysis of discontinuous piecewise smooth differential systems has recently had a large and fast growth due to its applications in several areas of the knowledge. Such systems model many phenomena in control systems (see \cite{Bar}), impact on mechanical systems (see \cite{BSC}), economy (see \cite{Ito}), biology (see \cite{Kri}), nonlinear oscillations (see \cite{Min}), neuroscience (see\cite{Co,HE,NC}), and other fields of science.

Establishing the existence of limit cycles is one of the major problem in the theory of differential systems. The interest in detecting such objects is due to the fact that they are non-local invariant sets providing information on the qualitative behavior of the system. The first studies on this subject considered smooth
differential systems and, since then, many contributions have been made in this direction (see \cite{Ily} and the references
therein). The study of limit cycles has also been considered for continuous (see, for instance, \cite{BL,LMP,LNT})
and discontinuous piecewise smooth differential systems (see, for instance, \cite{GLNP,HY,LM,LMN,LTZ}). Most of them are concentrated on planar piecewise differential systems. 

The {\it averaging theory} is one of the main tools for studying periodic solutions in regularly perturbed differential systems of the form
\begin{equation}\label{introS1}
\dot x=F_0(t,\x)+\sum_{i=1}^k\e^i F_i(t,\x)+\e^{k+1}R(t,\x,\e),\, (t,\x,\e)\in\R\times D\times (-\e_0,\e_0),
\end{equation}
where $D$ is an open bounded subset of $\R^n$ and the functions $F_i,$ $i=0,1,\ldots,k,$ and $R$ are $T$-periodic in the first variable. Here, $k$ is called {\it order of perturbation} in $\e$. As a fundamental hypothesis, it is assumed that the {\it unperturbed system},
\begin{equation}\label{introUS}
\dot x=F_0(t,\x),
\end{equation}
has a manifold $\CZ\subset\R^n$ of periodic solutions. Roughly speaking, this theory provides a sequence of functions, called {\it averaged functions}, which have their simple zeros associated with limit cycles of system \eqref{introS1}. 

The averaging theory has been extensively employed for studying periodic solutions of smooth and nonsmooth differential systems. First, considering $F_0=0$ (consequently, $\dim(\mathcal{Z})=n$) one can find in \cite{V,SVM} results providing sufficient condition on $F_1$ ensuring the existence of periodic solutions of system \eqref{introS1} under smoothness and boundedness conditions. Topological methods were used in \cite{BL}  to generalize these results for Lipschitz continuous differential systems. In \cite{LNT}, assuming the weaker hypothesis $\dim(\mathcal{Z})=n$, the averaging theory was developed at any order for Lipschitz continuous differential systems. Then, in \cite{LNT2015,LMN}, the averaging theory was extended up to order $2$ for detecting periodic orbits of discontinuous piecewise smooth differential systems. Some applications of these results can be found in \cite{LTZ,No}. Finally, in \cite{ILN,LNR}, the averaging theory was developed at any order for a class of discontinuous piecewise smooth systems. 

When $\dim(\mathcal{Z})<n,$ the averaging theory has to be combined with other techniques, for instance \textit{Lyapunov-Schmidt reduction method}, to provide sufficient conditions for the existence of periodic solutions. Here, we also obtain a sequence of function, now called {\it bifurcation functions}, which have their simple zeros associated with limit cycles of system \eqref{introS1}.  In the smooth case, the averaging theory is developed at any order \cite{BuicaLlibreFran,LliNovCand,GineLlibreZhang}. For the nonsmooth case, the first order averaging theory has been addressed in \cite{NovaesThesis15}, however it is lacking in a higher order analysis.

In this paper, our first main goal is to develop the averaging theory up to order $2$ in $\e$ for a class of discontinuous piecewise smooth differential systems assuming $\dim(\mathcal{Z})=d<n.$ The study of any finite order in $\e$ could be performed in a similar way, however the general expression for higher order bifurcation functions would be more complex because it involves higher derivatives of composite functions. As our second main goal, we apply this result to study the number of limit cycles bifurcating from the periodic orbits of a linear differential system $x'=Mx$, where $M$ is a $(d+2)\times(d+2)$ matrix having one pair of pure imaginary conjugate eigenvalues, $m$ zeros eigenvalues, and $d-m$ real eigenvalues. We focus our  attention when this system is perturbed up to order $2$ in the small parameter $\e$ inside a class of discontinuous piecewise polynomial functions having  two zones.

This paper is organized as follows. In Section \ref{sec2}, we state our main results: Theorem \ref{BRt1}, improving the averaging theory for nonsmooth systems; and Theorems \ref{th3}-\ref{th7}, regarding piecewise polynomial perturbations of higher dimensional linear systems. In Section \ref{sec:pr}, we provide some preliminary results. The remainder Sections \ref{secth1}-\ref{phi=2pi} are devoted to the proofs of Theorem \ref{BRt1} and Theorems \ref{th3}-\ref{th7}.

\section{Statements of the main results}\label{sec2}

\subsection{Advances on averaging theory}\label{sub12}

In this subsection we improve the averaging theory of first and
second order to study the limit cycles of a class of discontinuous
piecewise smooth differential systems.

Let $D$ be an open bounded subset of $\R^{d+1}$ and for a positive
real number $T$ we consider the $\C^3$ differentiable functions
$F_i^{\pm}:\s^1\times D\rightarrow\R^{d+1}$ for $i=0,1,2$, and
$R^{\pm}:\s^1\times D\times(-\e_0,\e_0)\rightarrow\R^{d+1}$ where
$\s^1\equiv\R/(\Z T)$. Thus, we define the following $T$-periodic
{\it discontinuous piecewise smooth differential system}
\begin{equation}\label{BRs1}
\x'=\left\{\begin{array}{l}
F^+(\T,\x,\e) \quad \textrm{if}\quad 0\leq\T\leq\phi,\vspace*{-0.2cm}\\
F^-(\T,\x,\e) \quad \textrm{if}\quad \phi\leq\T\leq T,
\end{array}\right.
\end{equation}
where the prime denotes derivative with respect to the variable
$\T\in \s^1$, and
\[
F^{\pm}(\T,\x,\e)=F_0^{\pm}(\T,\x)+\e
F_1^{\pm}(\T,\x)+\e^2F_2^{\pm}(\T,\x)+\e^3R^{\pm}(\T,\x,\e),
\]
with $\x \in D$. The set of discontinuity of system \eqref{BRs1} is
given by $\Sigma=\{\T=0\}\cup\{\T=\phi\}$.

For $\z\in D$, let $\f(\T,\z)$ be the solution of the   unperturbed
system
\begin{equation}
\label{ups} \x'=F_0(\T,\x),
\end{equation}
such that $\f(0,\z)=\z$, where
$$
F_0(\T,\x)=\left\{\begin{array}{l}
F_0^+(\T,\x) \quad \textrm{if}\quad 0\leq\T\leq\phi,\vspace*{-0.2cm}\\
F_0^-(\T,\x) \quad \textrm{if}\quad \phi\leq\T\leq T.
\end{array}\right.
$$
Clearly,
$$
\varphi(\T,\z)=\left\{\begin{array}{l}
\varphi^+(\T,\z) \quad \textrm{if}\quad 0\leq\T\leq\phi,\vspace*{-0.2cm}\\
\varphi^-(\T,\z) \quad \textrm{if}\quad \phi\leq\T\leq T,
\end{array}\right.
$$
where $\f^{\pm}(\T,\z)$ are the solutions of the systems
\begin{equation}\label{pns}
\x'=F_0^{\pm}(\T,\x),
\end{equation}
such that $\f^{\pm}(0,\z)=\z$.

We assume that there exists a manifold $\CZ$ embedded in $D$ such
that the solutions starting in $\CZ$ are all $T$-periodic.
More precisely, for $p=d+1,$ $q\leq p$ and $V$ an open bounded subset of $\R^q$, let $\sigma:\ov
V\rightarrow\R^{p-q}$ be a $\C^3$ function and define
\begin{equation}
\label{Z} \CZ=\{\z_{\nu}=(\nu,\sigma(\nu)):\,\nu \in\ov V\}.
\end{equation}
We shall assume that
\begin{itemize}
\item[($H$)] $\CZ\subset D$ and for each $\z_{\nu}$ the unique solution
$\f(\T,\z_{\nu})$ such that $\f(0,\z_{\nu})=\z_{\nu}$ is $T$-periodic.
\end{itemize}

For $\z\in D$ we consider the first order variational equations of
systems \eqref{pns} along the solution $\f^{\pm}(\T,\z)$, that is
\begin{equation}
\label{lin} Y'=D_{\x}F_0^{\pm}(\T,\f^{\pm}(\T,\z))\,Y.
\end{equation}
Denote by  $Y^{\pm}(\T,\z)$ a fundamental matrix of the differential
system \eqref{lin}.

Let $\xi:\R^q\times \R^{p-q}\rightarrow \R^q$ and
$\xi^{\perp}:\R^q\times \R^{p-q}\rightarrow \R^{p-q}$ be the
orthogonal projections onto the first $q$ coordinates and onto the
last $p-q$ coordinates, respectively. For a point $\rm \z \in D$
denote ${\z}=(u,v) \in  \R^q \times \R^{p-q}$.  Before defining the 
bifurcation functions we have to define some auxiliar functions. Let
\begin{equation}\label{yi}
\begin{array}{l}
y^{\pm}_0(\T,\z)=\f^{\pm}(\T,\z),\vspace{0.2cm}\\
\dis y^{\pm}_1(\T,\z)= Y^{\pm}(\T,\z)\int_0^{\T} Y^{\pm}(s,\z)^{-1}
F^{\pm}_1(s,\f^{\pm}(s,\z))ds,\vspace{0.2cm}\\
y^{\pm}_2(\T,\z)=Y^{\pm}(\T,\z)\dis\int_0^{\T}
Y^{\pm}(s,\z)^{-1}\Bigg(2
F^{\pm}_2(s,\f^{\pm}(s,\z))+\vspace{0.2cm}\\
\qquad \qquad \,\,\, \dis ~2\dfrac{\p F^{\pm}_1}{\p \x}(s,\f^\pm(s,\z))
y^{\pm}_1(s,\z) +\dfrac{\p^2 F^{\pm}_0}{\p \x^2}(s,\f^{\pm}(s,\z))
y^{\pm}_1(s,\z)^{2}\Bigg)ds.
\end{array}
\end{equation}
In the formula of $y^{\pm}_2(\T,\z)$, the second derivative $\dfrac{\p^2 F^{\pm}_0}{\p \x^2}(s,\f^{\pm}(s,\z))$
is a bilinear form defined on $\R^{p}\times \R^{p}$ which is applied to a ``product'' of two vectors, in our case $y^{\pm}_1(s,\z)^{2}$. 

Now, consider
\begin{equation}\label{gi}
g_i(\z)=y_i^+(\phi,\z)-y_i^-(\phi-T,\z),~\text{for $i=0,1,2$}.
\end{equation}
The functions $g_1$ and $g_2$ are usually called averaged functions of order $1$ and $2,$ respectively.
Finally, assuming that the lower right corner
$(p-q)\times(p-q)$ matrix of $Y^+(\phi,\nu)-Y^-(\phi-T,\nu),$ denoted by $\Delta_{\nu},$ is invertible, we define
\begin{equation}\label{gamma}
\gamma(\nu)=-\Delta_{\nu}^{-1}\xi^{\perp}g_1(\z_{\nu}).
\end{equation}

Hence, the bifurcation functions  $f_1, f_2:\ov V\rightarrow \R^q$ of order $1$ and $2$ are given, respectively, by
\begin{align}
\label{fi}
\begin{array}{l}
f_1(\nu)=\xi g_1(\z_{\nu}),\\
f_2(\nu)=    2\dfrac{\p \xi g_1}{\p
v}(\z_{\nu})\gamma(\nu)\vspace{0.2cm} +\dfrac{\p^2 \xi g_0}{\p
v^2}(\z_{\nu})\gamma(\nu)^2 + 2\xi g_2(\z_{\nu}).
\end{array}
\end{align}
Again, in the formula of $f_2$, the second derivative $\dfrac{\p^2 \xi g_0}{\p
v^2}(\z_{\nu})$ is a bilinear form defined on $\R^{(p-q)}\times\R^{(p-q)}$. Thus, as before, we say that it is applied to a ``product'' of two vectors, in our case, $\gamma(\nu)^2.$ 

Our main result on the periodic solutions of system \eqref{BRs1} is
the following.

\begin{theorem}\label{BRt1}
In addition to hypothesis $(H),$ we assume that for any $\nu\in\ov V$
the matrix $Y^+(\phi,\nu)-Y^-(\phi-T,\nu)$ has in the upper right
corner the null $q\times(p-q)$ matrix, and in the lower right corner
has the $(p-q)\times(p-q)$ matrix $\Delta_{\nu}$ with
$\det(\Delta_{\nu})\neq0$. Then, the following statements hold.
\begin{itemize}
\item[$(a)$] If there exists ${\nu^*}\in V$ such that $f_1({\nu^*})=0$ and
$\det(f_1'({\nu^*}))\neq0$, then for $|\e|>0$ sufficiently small
there exists a $T$--periodic solution $\x(\T,\e)$ of system
\eqref{BRs1} such that $\x(0,\e)\to \z_{\nu^*}$ as $\e\to 0$.
\vspace{0.15cm}

\item[$(b)$] Assume that $f_1\equiv 0$. If there exists ${\nu^*}\in V$
such that $f_2({\nu^*})=0$ and $\det(f_2'({\nu^*}))\neq0$, then for
$|\e|>0$ sufficiently small there exists a $T$--periodic solution
$\x(\T,\e)$ of system \eqref{BRs1} such that $\x(0,\e)\to
\z_{\nu^*}$ as $\e\to 0$.
\end{itemize}
\end{theorem}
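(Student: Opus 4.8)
The strategy is to reduce the periodic-solution problem for the discontinuous system \eqref{BRs1} to a finite-dimensional zero-finding problem via a displacement (Poincar\'e) map, and then to apply the Lyapunov--Schmidt reduction / Brouwer-degree machinery exactly as in the smooth averaging theory, taking care at the discontinuity set $\Sigma=\{\T=0\}\cup\{\T=\phi\}$. First I would construct, for $\z$ near $\CZ$ and $|\e|$ small, the solution $\x(\T,\z,\e)$ of \eqref{BRs1} with $\x(0,\z,\e)=\z$ by concatenation: integrate the ``$+$'' field from $\T=0$ to $\T=\phi$ to get $\x^+(\phi,\z,\e)$, then integrate the ``$-$'' field from $\T=\phi$ to $\T=T$ with initial condition $\x^+(\phi,\z,\e)$. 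Since the orbit is required to be continuous across $\Sigma$ (the switching is in the independent variable $\T$, not a crossing condition on $\x$), the period-$T$ condition is simply $\x^-(T,\z,\e)-\z=0$. By standard smooth dependence on initial conditions and parameters for each piece, $\x^{\pm}$ are $\C^3$ in $(\z,\e)$, so the full displacement function $\delta(\z,\e):=\x^-(T,\z,\e)-\z$ is $\C^3$.

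Next I would expand $\delta(\z,\e)$ in powers of $\e$ up to order $2$. Writing $\x^{\pm}(\T,\z,\e)=\f^{\pm}(\T,\z)+\e\,y_1^{\pm}(\T,\z)+\e^2\, y_2^{\pm}(\T,\z)+O(\e^3)$, the coefficients $y_1^{\pm}, y_2^{\pm}$ are obtained by substituting into \eqref{pns} and matching powers of $\e$: the $O(\e)$ term solves the first variational equation \eqref{lin} with forcing $F_1^{\pm}$, giving the variation-of-constants formula for $y_1^{\pm}$ in \eqref{yi}, and the $O(\e^2)$ term solves \eqref{lin} with the forcing built from $F_2^{\pm}$, $\p_{\x}F_1^{\pm}\cdot y_1^{\pm}$ and $\tfrac12\p_{\x}^2F_0^{\pm}\cdot (y_1^{\pm})^2$, which is exactly $y_2^{\pm}$ in \eqref{yi} (the factors $2$ there are bookkeeping for the $\tfrac12$'s). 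The key point is to propagate the concatenation through the expansion: $\x(0,\z,\e)=\z$ is $\e$-independent, so $y_i^{\pm}(0,\z)=0$, and the matching at $\T=\phi$ forces the value at $\phi$ of the ``$-$'' branch to equal the value at $\phi$ of the ``$+$'' branch order by order. Because $\f(\T,\z_\nu)$ is $T$-periodic for $\z_\nu\in\CZ$ by $(H)$, one has $\f^-(T,\z_\nu)=\z_\nu$, i.e.\ the $O(1)$ part of $\delta$ vanishes on $\CZ$; evaluating the $O(\e)$ and $O(\e^2)$ parts on $\CZ$ and using the monodromy structure produces precisely $g_1$ and $g_2$ of \eqref{gi}, where $g_i(\z)=y_i^+(\phi,\z)-y_i^-(\phi-T,\z)$ encodes ``value after one period starting from $\z$''.

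Now comes the Lyapunov--Schmidt step, which is where the hypothesis on $Y^+(\phi,\nu)-Y^-(\phi-T,\nu)$ enters. On $\CZ=\{(\nu,\sigma(\nu))\}$ the linearization of the period map in the normal directions $v$ is controlled by the lower-right block $\Delta_\nu$, which is invertible by assumption, while the upper-right block being zero means the tangential equation decouples at leading order. I would split $\z=(u,v)$ and the equation $\delta(\z,\e)=0$ into its $\xi$-component (first $q$ coordinates) and $\xi^\perp$-component (last $p-q$ coordinates). Rescaling $\e$ appropriately (set $u=\nu$, $v=\sigma(\nu)+\e\,\beta+\cdots$), the implicit function theorem applied to the $\xi^\perp$-equation — solvable precisely because $\det\Delta_\nu\neq 0$ — determines $\beta$ and yields $\gamma(\nu)=-\Delta_\nu^{-1}\xi^\perp g_1(\z_\nu)$ as in \eqref{gamma}. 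Substituting back into the $\xi$-equation and collecting the $O(\e)$, resp.\ $O(\e^2)$, terms gives the bifurcation functions $f_1$, resp.\ $f_2$, of \eqref{fi}: in particular $f_1(\nu)=\xi g_1(\z_\nu)$, and when $f_1\equiv 0$ the next-order balance brings in the corrections $2\,\p_v(\xi g_1)\,\gamma+\p_v^2(\xi g_0)\,\gamma^2+2\xi g_2$. Finally, a simple zero $\nu^*$ of $f_1$ (resp.\ of $f_2$) with nonvanishing Jacobian persists under the $\e$-perturbation by the implicit function theorem (equivalently, Brouwer degree is preserved), producing the asserted $T$-periodic solution $\x(\T,\e)$ with $\x(0,\e)\to\z_{\nu^*}$.

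The main obstacle I anticipate is the careful $\e$-expansion of the \emph{concatenated} flow across $\T=\phi$ together with the monodromy bookkeeping that turns $y_i^-(\phi,\cdot)$ evaluated along the ``$-$''-branch into the $y_i^-(\phi-T,\cdot)$ appearing in \eqref{gi}; one must track how the fundamental matrices $Y^{\pm}$ and their inverses propagate the order-$k$ remainders, and verify that no extra jump terms arise precisely because solutions are continuous across $\Sigma$ (unlike Filippov crossing dynamics). Once that expansion is pinned down, the remainder is the standard finite-dimensional reduction and is essentially routine.
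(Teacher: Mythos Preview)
Your plan is correct and the Lyapunov--Schmidt reduction step matches the paper almost verbatim (the paper packages it as a black-box Lemma and cites \cite{LN} for its proof). The one genuine difference is in how the displacement map is built.

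You propose the \emph{concatenated} (forward--forward) construction: integrate the $+$ field from $0$ to $\phi$, feed the endpoint as initial data to the $-$ field at $\T=\phi$, integrate to $T$, and set $\delta(\z,\e)=\x^-(T,\z,\e)-\z$. This works, but as you yourself note, it forces you to expand a \emph{composition} of $\e$-dependent flows and then massage the result into the form $g_i(\z)=y_i^+(\phi,\z)-y_i^-(\phi-T,\z)$ of \eqref{gi}; the monodromy bookkeeping across $\T=\phi$ is exactly the obstacle you flag.

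The paper sidesteps this entirely with a \emph{forward--backward} matching: it takes $\psi^+$ and $\psi^-$ to be the flows of the $+$ and $-$ systems \emph{both started at $\z$ at $\T=0$}, uses $T$-periodicity of the vector field to regard $\psi^-$ on $[\phi-T,0]$, and defines the displacement as
\[
g(\z,\e)=\psi^+(\phi,\z,\e)-\psi^-(\phi-T,\z,\e).
\]
Because both pieces share the same initial condition at $\T=0$ and are never composed, the Taylor expansion in $\e$ is the ordinary one for each smooth flow separately, and $g_i=y_i^+(\phi,\cdot)-y_i^-(\phi-T,\cdot)$ drops out immediately with no concatenation step. The derivative $Dg_0(\z_\nu)=Y^+(\phi,\z_\nu)-Y^-(\phi-T,\z_\nu)$ is then read off directly, matching the hypothesis on the block structure. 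What you gain from the paper's formulation is precisely the elimination of the obstacle you anticipated; what your route would buy is nothing extra here, though it is the more natural picture if one were dealing with genuine Filippov crossing dynamics rather than time-triggered switching.
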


Theorem \ref{BRt1} is proved in Section \ref{secth1}. The following
result is an immediate consequence of Theorem \ref{BRt1}.

\begin{corollary}\label{cor2}
Assume the hypothesis $(H)$ and that $q=p$, in this case $\CZ=\ov
V\subset D$ is a compact bounded $p$--dimensional manifold. Then, 
statements $(a)$ and $(b)$ of Theorem \ref{BRt1} hold by taking
$f_1=g_1$ and $f_2=2g_2$.
\end{corollary}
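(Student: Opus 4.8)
The plan is to obtain Corollary \ref{cor2} directly as the specialization of Theorem \ref{BRt1} to the extreme case $q=p$, verifying that in this situation all the hypotheses of the theorem are automatically fulfilled and that the averaged functions $f_1,f_2$ in \eqref{fi} collapse to the stated expressions. First I would observe that $q=p$ forces $p-q=0$, so that $\R^{p-q}$ is the trivial vector space $\{0\}$. Consequently the map $\sigma:\ov V\to\R^{p-q}$ is the trivial map, the points of $\CZ$ in \eqref{Z} are $\z_{\nu}=(\nu,0)$, and $\CZ$ is naturally identified with $\ov V$, which by $(H)$ is a compact bounded $p$--dimensional manifold contained in $D$, all of whose orbits are $T$--periodic. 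Also, writing $\z=(u,v)\in\R^q\times\R^{p-q}$, here $v\equiv 0$ and $\z=u$.

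Next I would check the structural hypothesis on $Y^+(\phi,\nu)-Y^-(\phi-T,\nu)$. Its upper right corner, a $q\times(p-q)$ block, and its lower right corner $\Delta_{\nu}$, a $(p-q)\times(p-q)=0\times 0$ block, are both empty matrices; in particular the upper right block is vacuously the null matrix and, with the usual convention $\det(\text{empty matrix})=1$, we get $\det(\Delta_{\nu})=1\neq 0$, so $\Delta_{\nu}$ is trivially invertible. Hence the hypothesis of Theorem \ref{BRt1} holds with no additional requirement. Then I would reduce the averaged functions: since $\xi^{\perp}$ projects onto the last $p-q=0$ coordinates, $\xi^{\perp}g_1(\z_{\nu})\in\R^0$ and therefore $\gamma(\nu)=-\Delta_{\nu}^{-1}\xi^{\perp}g_1(\z_{\nu})=0$ for every $\nu\in\ov V$; and since $\xi$ projects onto the first $q=p$ coordinates, it is the identity on $\R^p$, so $\xi g_i(\z_{\nu})=g_i(\z_{\nu})$. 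Substituting $\gamma\equiv 0$ into \eqref{fi}, the first two terms in the expression for $f_2$ vanish (the $v$--derivatives occurring there being moreover vacuous), and one is left with
\[
f_1(\nu)=\xi g_1(\z_{\nu})=g_1(\nu),\qquad f_2(\nu)=2\,\xi g_2(\z_{\nu})=2\,g_2(\nu).
\]
Applying parts $(a)$ and $(b)$ of Theorem \ref{BRt1} with these identifications yields the two statements of the corollary, where $f_1'(\nu^*)$ and $f_2'(\nu^*)$ are now simply the Jacobian matrices of $g_1$ and $2g_2$ at $\nu^*$.

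There is no genuinely hard step here; the only point requiring care is the consistent bookkeeping of the degenerate case $p-q=0$ — interpreting $\R^0$, the empty matrices $\Delta_{\nu}$ and $\partial\xi g_1/\partial v$, and the vacuous $v$--derivatives in \eqref{fi} — so that every hypothesis of Theorem \ref{BRt1} is seen to hold automatically and formulas \eqref{fi}--\eqref{gamma} genuinely simplify to $f_1=g_1$, $f_2=2g_2$.
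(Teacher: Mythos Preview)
Your proposal is correct and matches the paper's approach: the paper simply declares Corollary~\ref{cor2} an ``immediate consequence'' of Theorem~\ref{BRt1}, and what you have written is precisely the verification that in the degenerate case $q=p$ (so $p-q=0$) the structural hypotheses become vacuous and formulas \eqref{fi}--\eqref{gamma} collapse to $f_1=g_1$, $f_2=2g_2$. There is nothing to add.
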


\subsection{Perturbations of higher dimensional linear systems}

Consider a $(d+2)\times(d+2)$ matrix  $M$ given by
\begin{equation*}
\label{M}
    M= \left( \begin{matrix}
        0&-1&0_{1\times d} \vspace*{-0.2cm}\\
        1&0&0_{1\times d}\vspace*{-0.2cm}\\
        0_{d\times 1}&0_{d\times 1}& \widetilde M
    \end{matrix} \right),
\end{equation*}
where $0_{i\times j}$ denotes a null $i\times j$ matrix. When  $0<m<d$ assume that  $\widetilde M$ is the diagonal matrix
diag($\mu_1$, $\mu_2$, $\ldots$, $\mu_d$) with $\mu_1=\ldots=
\mu_{m}=0$ and $\mu_{m+1} \neq 0,\ldots, \mu_d \neq 0$. If $m=0$,
then  $\widetilde{M}$ is a diagonal matrix with all entries distinct
from zero, and if $m=d$ we assume that $\widetilde{M}$ is the null
matrix.

Let $L_1=\{(x,0,z):\, x\ge 0, \, {\rm z}\in\R^d\}$ and $L_2=\{(\la
\cos\phi,\la \sin\phi,z):\,\la\ge 0,\, {\rm z}\in\R^d\}$ be two
half--hyperplanes of $\R^{d+2}$ sharing the boundary
$\{0,0,z):\,{\rm z}\in\R^d\}$. The set $\Sigma=L_1\cup L_2$ splits
$D \subset\R^{d+2}$ in $2$ disjoint open sectors, namely $C^+$ and
$C^-$ (see Figure \ref{fig1}).

\begin{figure}[h]
\begin{center}
\begin{overpic}[width=5cm]{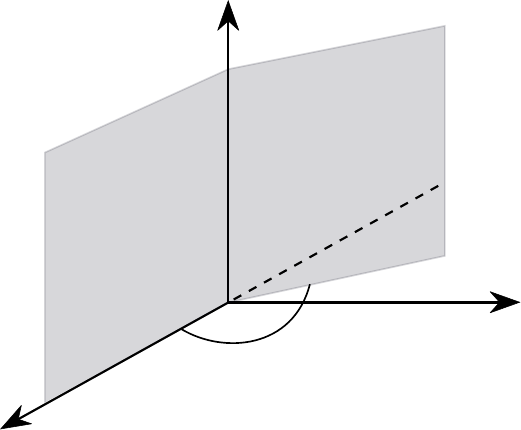}
\begin{small}
\put(55,15){$\phi$} \put(9,0){$x$} \put(96,19){$y$}
\put(47,79){${\rm z}$} \put(20,45){$L_1$} \put(70,65){$L_2$}
\put(30,66){$\Sigma$}
\end{small}
\end{overpic}
\end{center}
\caption{Set of discontinuity $\Sigma$.}
\label{fig1} 
\end{figure}

We will denote by $X_\lambda$ and $Y_\lambda$ two polynomials of
degree $n$ in the variables $x,y \in \R$ and  ${\rm z}=(z_1, \ldots,
z_d) \in \R^d$, more precisely
\begin{equation*}
\begin{array}{ll}
X_{\lambda} (x,y,{\rm z}) =& \dis\sum_{i+j+k_1+\ldots+k_d=0}^{n}
\lambda_{ijk_1\ldots k_d} \, x^i y^j z_1^{k_1} \ldots z_d^{k_d},
~\text{and} \vspace*{0.15cm}\\
Y_{\lambda} (x,y,{\rm z}) =&  \dis\sum_{i+j+k_1+\ldots+k_d=0}^{n}
\lambda_{ijk_1\ldots k_d} \, x^i y^j z_1^{k_1} \ldots z_d^{k_d},
\end{array}
\end{equation*}
for  $\lambda_{ijk_1\ldots k_d}\in\R$ and $i,j, k_1, \ldots, k_d \in
\N$. Then, take
\begin{equation}\label{pert}
X^\pm=(X_{a^\pm},X_{b^\pm}, X_{c_{1}^\pm}, \,\ldots\,,
X_{c_{d}^\pm}), \quad Y^\pm=(Y_{\al^\pm},Y_{\beta^\pm},
Y_{\gamma_{1}^\pm}, \,\ldots\,, Y_{\gamma_{d}^\pm}),
\end{equation}
and let $\X(x,y,\mathrm{z})$ and $\Y(x,y,\mathrm{z})$ be polynomial
vector fields defined by
\begin{align*}
\X(x,y,\mathrm{z}) &= X^\pm(x,y,\mathrm{z}) \quad \textrm{if} \quad
(x,y,\mathrm{z})\in C^\pm,\\
\Y(x,y,\mathrm{z})&=  Y^\pm(x,y,\mathrm{z}) \quad\,  \textrm{if}
\quad (x,y,\mathrm{z})\in C^\pm.
\end{align*}
Now, consider the discontinuous piecewise polynomial differential
systems
\begin{equation}
\label{As1} (\dot x, \dot y, \dot{\mathrm{z}})= M
\left(x,y,\mathrm{z}\right) + \e \X  \left(x,y,\mathrm{z}\right) +
\e^2 \Y\left(x,y,\mathrm{z}\right),
\end{equation}
where $x,y\in \R$  and $\mathrm{z}=(z_1,z_2,\ldots,z_d) \in \R^d$.
The dot denotes derivative with respect to the time $t$, and
$\Sigma$ denotes the set of discontinuity for system \eqref{As1}. Also, $M \left(x,y,\mathrm{z}\right)$ is an abuse of notation and denotes the matrix $M$ applied to the vector $\left(x,y,\mathrm{z}\right)$, which is defined as the product between the matrix $M$ with the column matrix associated with the vector $\left(x,y,\mathrm{z}\right)$.  This abuse of notation will be recurrent throughout the paper.

Denote by $N_i(m,n,\phi)$ the maximum number of limit cycles of
system \eqref{As1} that can be detected using averaging theory of
order  $i$ when $|\e|\neq0$ is sufficiently small.

\begin{theorem}\label{th3}
Assume $0\leq m\leq d$, $n\in\mathbb{N}$, and $\phi\in(0,2\pi)\setminus\{\pi\}$. Then, 
\begin{itemize}
\item[$(a)$] $N_1(m,n,\phi)=n^{m+1}$ and \vspace{0.2cm}
\item[$(b)$] $2n(2n-1)^m\leq N_2(m,n,\phi)\leq (2n)^{m+1}$.
\end{itemize}
\end{theorem}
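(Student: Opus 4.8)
The plan is to deduce Theorem~\ref{th3} from Theorem~\ref{BRt1} (and Corollary~\ref{cor2} when $m=d$), applied to system~\eqref{As1}. First I would put~\eqref{As1} into the form~\eqref{BRs1}: passing to cylindrical coordinates $(x,y)=(r\cos\T,r\sin\T)$ (keeping $\mathrm z$) and taking the angle $\T$ as the new independent variable, so that dividing the equations for $\dot r$ and $\dot{\mathrm z}$ by $\dot\T=1+\CO(\e)$ gives a $2\pi$-periodic system of type~\eqref{BRs1} in $\R^{d+1}$ on a bounded domain staying away from $r=0$, whose discontinuity set $\{\T=0\}\cup\{\T=\phi\}$ is the image of $\Sigma=L_1\cup L_2$. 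The unperturbed field is $F_0^{\pm}(\T,r,\mathrm z)=(0,\widetilde M\mathrm z)$, which is \emph{linear}; hence the manifold~\eqref{Z} of periodic orbits is $\CZ=\{(r,z_1,\dots,z_m,0,\dots,0)\}$, so $p=d+1$, $q=m+1$, $p-q=d-m$ and $\sigma\equiv0$. Since $F_0$ is linear, each $Y^{\pm}(\T,\z)$ equals the constant diagonal matrix $\mathrm{diag}(1,\dots,1,e^{\mu_{m+1}\T},\dots,e^{\mu_d\T})$; therefore the upper right $q\times(p-q)$ block of $Y^+(\phi)-Y^-(\phi-2\pi)$ vanishes and its lower right block is $\Delta=\mathrm{diag}\big(e^{\mu_j\phi}(1-e^{-2\pi\mu_j})\big)_{j=m+1}^d$, which is invertible because every $\mu_j\neq0$. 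Thus hypothesis~$(H)$ and the remaining hypotheses of Theorem~\ref{BRt1} hold.

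For part~(a) I would compute $f_1(\nu)=\xi g_1(\z_\nu)$ from~\eqref{gi}--\eqref{yi}. On $\CZ$ the flow $\f^{\pm}(s,\z_\nu)$ is linear in $\nu=(r,z_1,\dots,z_m)$, and after the substitution $(x,y)=(r\cos\T,r\sin\T)$ the field $F_1^{\pm}$ is a polynomial of degree at most $n$ in $(r,\mathrm z)$ with $\T$-trigonometric coefficients; so the integrations in~\eqref{yi} only produce $\phi$-dependent constants and $f_1$ is a vector of $m+1$ real polynomials of degree at most $n$ in $\nu$. By Theorem~\ref{BRt1}(a) and B\'ezout's bound, $N_1(m,n,\phi)\le n^{m+1}$. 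For the reverse inequality I would choose the coefficients of $\X$ so that $f_1$ decouples, $f_1(\nu)=\big(P_0(r),P_1(z_1),\dots,P_m(z_m)\big)$ with each $P_\ell$ of degree exactly $n$ having $n$ simple zeros in the admissible range; then all $n^{m+1}$ zeros of $f_1$ are simple because $f_1'$ is diagonal and nonsingular there. Producing $P_0$ of degree $n$ needs the integrals $\int_0^\phi\cos^{2j+1}\T\,d\T$ to be nonzero, which holds exactly for $\phi\in(0,2\pi)\setminus\{\pi\}$ (they equal $\sin\phi$ times a Wallis polynomial in $\sin^2\phi$ that is positive on $[0,1]$); the $P_\ell$ with $\ell\ge1$ are obtained by letting the perturbation of $\dot z_\ell$ depend on $z_\ell$ alone. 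This proves~(a).

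For~(b) and~(c) I would compute $f_2$ from~\eqref{fi}. Because $F_0$ is linear, the terms $\partial^2_v(\xi g_0)(\z_\nu)\,\gamma(\nu)^2$ and $(\p^2 F_0/\p\x^2)\,(y_1^{\pm})^2$ in~\eqref{fi}--\eqref{yi} vanish identically, so $f_2(\nu)=2\,\partial_v(\xi g_1)(\z_\nu)\,\gamma(\nu)+2\,\xi g_2(\z_\nu)$ with $\gamma(\nu)=-\Delta^{-1}\xi^{\perp}g_1(\z_\nu)$ and $\Delta$ constant, the only remaining quadratic contribution coming from the term $\tfrac{\p F_1}{\p\x}\,y_1$ in~\eqref{yi}. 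The expansion of $1/\dot\T$ inserts a factor $1/r$ at second order, so each component of $f_2$ is a polynomial of degree at most $2n-1$ in $\nu$ \emph{plus a simple pole at $r=0$}; after clearing the pole, $r\cdot f_2$ consists of $m+1$ polynomials of degree at most $2n$ whose common zeros off $\{r=0\}$ coincide with those of $f_2$, and Theorem~\ref{BRt1}(b) together with B\'ezout gives $N_2(m,n,\phi)\le(2n)^{m+1}$. For the lower bound in~(b) I would choose $\X$ so that $f_1\equiv0$ (which still leaves $y_1$, hence $g_2$, rich) and $\Y$ so that $r\cdot f_2$ becomes a triangular system: its $r$-entry of degree $2n$ in $r$ --- this full degree being reached through the square of a degree-$n$, $x$-dependent first order perturbation divided by $r$ --- and its $z_\ell$-entries of degree $2n-1$, coming from $\tfrac{\p F_1}{\p\x}\,y_1$ with $\dot z_\ell$ perturbed by a function of $z_\ell$. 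Arranging that each entry has the maximal number of simple real zeros in the admissible region produces $2n(2n-1)^m$ limit cycles. For~(c), where $m=1$, I would invest the whole degree budget on the single pair $(r,z_1)$, so that \emph{both} entries of $r\cdot f_2$ become genuine degree-$2n$ polynomials (the $z_1$-entry reaching degree $2n$ through the square of a degree-$n$ polynomial in $z_1$ divided by $r$) with $2n$ simple real zeros each and no common zero on $\{r=0\}$; this attains the B\'ezout bound $(2n)^2$, which together with the upper bound already proved gives the equality $N_2(1,n,\phi)=(2n)^2$.

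The hard part will be the lower-bound constructions, and especially the sharp one in~(c). Prescribing the decoupled (part~(a)) or triangular (parts~(b) and~(c)) normal forms amounts to demanding that the map sending the free coefficients of $\X$ (and $\Y$) to the coefficients of the target one-variable polynomials be onto; the quadratic dependence on the first order coefficients that enters $f_2$ through $\tfrac{\p F_1}{\p\x}\,y_1$ and through the $1/r$ term makes this delicate, and one must then place the roots so that they are real, simple and inside $r>0$, using $\phi\neq\pi$ repeatedly to keep the relevant Wallis-type $\T$-integrals nonzero. Equally delicate is the bookkeeping of degrees through the iterated integrals in~\eqref{yi}: verifying the degree-$2n$ (rather than $2n-1$) bound after clearing the pole at $r=0$ is exactly what makes the upper bound of~(b) and the exact value in~(c) consistent.
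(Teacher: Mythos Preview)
Your proposal is correct and follows essentially the same route as the paper: the paper likewise passes to cylindrical coordinates, verifies the hypotheses of Theorem~\ref{BRt1} via the explicit fundamental matrix, and reduces each statement to counting simple zeros of $f_1$ and $f_2$, obtaining the upper bounds by B\'ezout after clearing the $1/r$ pole and the lower bounds by explicit triangular examples whose nonvanishing relies on the trigonometric integrals being nonzero for $\phi\neq\pi$ (their Lemma~\ref{lemtrig}(a)). The only cosmetic difference is that for part~(a) the paper argues directly that every coefficient of each $f_{1\ell}$ is an independent free parameter (so B\'ezout is attained without decoupling), and for part~(c) the degree-$2n$ term in the $z_1$-component arises from the product $A_1^{\pm}A_3^{\pm}/r$ (with $X_{a^\pm}$ \emph{and} $X_{c_1^\pm}$ both carrying $z_1$-dependence) rather than a literal square, which forces a small extra cancellation to keep $f_{20}$ independent of $z_1$; your sketch anticipates exactly this kind of bookkeeping in its final paragraph.
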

Theorem \ref{th3} generalizes the particular case $m=d$ of
\cite{LTZ}. Comparing itens $(a)$ and $(b)$ of Theorem \ref{th3}, we can easily check that $N_2(m,n,\phi)>N_1(m,n,\phi)$ for every $0\leq m\leq d$, $n\in\mathbb{N}$, and $\phi\in(0,2\pi)\setminus\{\pi\}$. 

Notice that, the lower and upper bounds given in statement $(b)$ of Theorem
\ref{th3} coincide for $m=0$. In this case, $N_2(0,n,\phi) = 2n$. In general, the lower bound of statement $(b)$ of Theorem \ref{th3} is not optimal and can be improved in some cases (see Proposition \ref{improve}).


Theorems \ref{th3} is proved in section
\ref{phineqpi}.

If $\phi = \pi$ we note that the maximum number of limit cycles eventually
decreases as stated in the following result.

\begin{theorem} \label{th6}
Assume $0\leq m\leq d$ and $\phi =\pi$. Then, 
\begin{itemize}
\item[$(a)$] $N_1(m,n,\pi)= n^{m+1}$ and\vspace{0.2cm}
\item[$(b)$] $ N\leq N_2(m,n,\pi) \leq  (2n)^{m+1}$ where
$N=(2n-1)^{m+1}$ if $n$ is odd, and \vspace{0.1cm} \\
$N=(2n-2)(2n-1)^{m}$ if $n$ is even.
\end{itemize}
\end{theorem}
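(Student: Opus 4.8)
The plan is to run the machinery already used for Theorem~\ref{th3}: bring system~\eqref{As1} into the normal form~\eqref{BRs1}, apply Theorem~\ref{BRt1}, and then keep careful track of the extra cancellations that appear precisely when the two zones have angular width $\pi$.

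\textbf{Step 1: reduction to the normal form.} Passing to cylindrical coordinates $x=r\cos\T$, $y=r\sin\T$ and using $\T$ as independent variable is legitimate for $|\e|$ small and $r$ ranging in a compact set bounded away from $0$; the sectors $C^{\pm}$ become $\{0\le\T\le\pi\}$ and $\{\pi\le\T\le 2\pi\}$, so system~\eqref{As1} takes the form~\eqref{BRs1} with $\phi=\pi$, $T=2\pi$ and state $(r,z_1,\dots,z_d)$. The unperturbed part is $r'=0$, $z_i'=\mu_i z_i$, hence $\CZ=\{(r,z_1,\dots,z_m,0,\dots,0):\ r>0\}$, $q=m+1$, $p-q=d-m$, and hypothesis~$(H)$ holds. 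Since the two branches of $F_0$ coincide and are linear, the fundamental matrix is $Y^{\pm}(\T,\z)=e^{\T M_0}$, block diagonal, equal to the identity on the $(r,z_1,\dots,z_m)$--block and to $\operatorname{diag}(e^{\mu_{m+1}\T},\dots,e^{\mu_d\T})$ on the complementary block; therefore $Y^{+}(\pi,\nu)-Y^{-}(-\pi,\nu)$ has the block structure required by Theorem~\ref{BRt1}, with $\Delta_{\nu}=\operatorname{diag}\big(e^{\mu_j\pi}-e^{-\mu_j\pi}\big)_{j=m+1}^{d}$, invertible because every $\mu_j\neq0$. This reduction is carried out exactly as in Section~\ref{phineqpi}.

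\textbf{Step 2: statement $(a)$.} Because the $(r,z_1,\dots,z_m)$--block of $e^{\T M_0}$ is the identity, \eqref{yi} gives
\[
f_1(\nu)=\xi g_1(\z_\nu)=\int_0^{\pi}\xi F_1^{+}(s,\z_\nu)\,ds+\int_{\pi}^{2\pi}\xi F_1^{-}(s,\z_\nu)\,ds ,
\]
whose components are, in the new coordinates, trigonometric polynomials in $s$ of degree $\le n$ with coefficients polynomial in $(r,z_1,\dots,z_m)$ of degree $\le n$, the $r$--component carrying the additional factor $\cos s$ or $\sin s$ coming from $\dot r=(x\dot x+y\dot y)/r$. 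Splitting the second integral by $s\mapsto s+\pi$ and using $\cos(s+\pi)=-\cos s$, $\sin(s+\pi)=-\sin s$, each monomial of $X^{+}$ and of $X^{-}$ contributes to $f_1$ a term proportional to $\big(\la^{+}\pm\la^{-}\big)\big(\int_0^{\pi}\cos^{a}s\,\sin^{b}s\,ds\big)$ times a monomial in $(r,z_1,\dots,z_m)$; since $\la^{+}$ and $\la^{-}$ are free and for each admissible monomial at least one component of $\X$ yields a nonzero trigonometric integral, $f_1$ can be prescribed to be any polynomial map $\R^{m+1}\to\R^{m+1}$ of degree $\le n$. Bézout gives $N_1(m,n,\pi)\le n^{m+1}$, and taking $f_1$ to be the product system $\prod_{\ell}(r-r_\ell)=0$, $\prod_{\ell}(z_\rho-\zeta_{\rho,\ell})=0$ with distinct $r_\ell>0$ and distinct $\zeta_{\rho,\ell}$ produces $n^{m+1}$ simple zeros in $V$; by Theorem~\ref{BRt1}$(a)$ this yields $n^{m+1}$ limit cycles, so $N_1(m,n,\pi)=n^{m+1}$, the same value as for $\phi\neq\pi$.

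\textbf{Step 3: statement $(b)$.} Impose $f_1\equiv0$, a linear constraint on the coefficients of $\X$ that leaves those of $\Y$ free. Since $F_0$ is linear the term $\p^2_v(\xi g_0)$ in~\eqref{fi} vanishes, so
\[
f_2(\nu)=2\,\frac{\p(\xi g_1)}{\p v}(\z_\nu)\,\gamma(\nu)+2\,\xi g_2(\z_\nu),\qquad \gamma(\nu)=-\Delta_{\nu}^{-1}\xi^{\perp}g_1(\z_\nu).
\]
Expanding $\xi g_2$ and $\gamma$ in the new coordinates via~\eqref{yi}, each component of $f_2$ is a polynomial in $(r,z_1,\dots,z_m)$ of degree $\le 2n$, giving $N_2(m,n,\pi)\le(2n)^{m+1}$ by Bézout, exactly as in Section~\ref{phineqpi}. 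For the lower bound we again make $f_2$ a product system. Splitting every $\T$--integral over $[0,\pi]$ and $[\pi,2\pi]$ and applying $s\mapsto s+\pi$, the substitution annihilates, in each component of $f_2$, all monomials of one fixed parity in $(\cos\T,\sin\T)$; tracking this, each $z_\rho$--component can be prescribed to be an arbitrary polynomial of degree $\le 2n-1$, while the $r$--component, which also carries the Jacobian factor $r^{i+j}$ attached to an angular monomial of degree $i+j$, can realize polynomials of degree $\le 2n-1$ in $r$ when $n$ is odd and only of degree $\le 2n-2$ in $r$ when $n$ is even. Choosing $\Y$, together with the $f_1\equiv0$--compatible part of $\X$, so that $f_2$ is the product system $\prod_{\ell}(r-r_\ell)=0$ with $2n-1$ (resp.\ $2n-2$) factors and $\prod_{\ell}(z_\rho-\zeta_{\rho,\ell})=0$ with $2n-1$ factors each, produces $(2n-1)^{m+1}$ simple zeros for $n$ odd and $(2n-2)(2n-1)^{m}$ simple zeros for $n$ even; Theorem~\ref{BRt1}$(b)$ then yields the corresponding limit cycles, which is the asserted bound $N\le N_2(m,n,\pi)$.

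\textbf{Main obstacle.} The crux is Step~3: computing $f_2$ explicitly in cylindrical coordinates, pinning down exactly which trigonometric monomials survive the antipodal integration forced by $\phi=\pi$ (the single feature distinguishing this theorem from Theorem~\ref{th3}), and in particular explaining why the realizable degree of the $r$--component drops by one further unit when $n$ is even — this is tied to the fact that an angular monomial of degree $i+j$ appears multiplied by $r^{i+j}$, so that a parity constraint on $i+j$ becomes a parity constraint on the exponent of $r$. Once the surviving degrees are identified, exhibiting concrete polynomials $X_{\la^{\pm}}$ and $Y_{\omega^{\pm}}$ realizing a product system with the stated number of simple real zeros and checking the nonvanishing of the corresponding Jacobians is routine but laborious.
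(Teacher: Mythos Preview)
Your outline is correct and follows the same route as the paper: reduce to the standard form of Section~\ref{sub3.1}--\ref{sub3.2}, apply Theorem~\ref{BRt1}, and for $\phi=\pi$ use the parity properties of the integrals $I_{(p,q,\pi)}$, $J_{(p,q,\pi)}$ (statement $(b)$ of Lemma~\ref{lemtrig}) to see which monomials survive. The paper packages this as Propositions~\ref{prop16} and~\ref{prop18}, the direct analogues of Propositions~\ref{prop10} and~\ref{prop12}.

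Two points in Step~3 deserve more care. First, in the upper bound the components $f_{2\ell}$ are \emph{not} polynomials in $(r,z_1,\dots,z_m)$: the cylindrical Jacobian contributes a $1/r$ factor through $A_1^{\pm}$ (see \eqref{AB} and the appendix). The paper therefore argues with $r f_{2\ell}$, which \emph{is} a polynomial of degree $\le 2n$, and counts only zeros with $r>0$; your B\'ezout bound is right but the sentence ``each component of $f_2$ is a polynomial'' is not. Second, for the lower bound the paper's explicit example (take only $a^{\pm}_{i00}$ and $c^{\pm}_{\ell,0\ldots0 k_\ell 0}$ nonzero) does not yield a fully decoupled product system: one gets $f_{20}=f_{20}(r)$ but $f_{2\ell}=f_{2\ell}(r,z_\ell)$ still depends on $r$. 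The count $(2n-1)^{m+1}$ or $(2n-2)(2n-1)^m$ is obtained \emph{sequentially}: first pick a root $r^*>0$ of $f_{20}$, then observe that $f_{2\ell}(r^*,\cdot)$ is a polynomial of degree $2n-1$ in $z_\ell$ with coefficients that can be made independent. Your claim that one can prescribe $f_{2\rho}$ to be an arbitrary polynomial in $z_\rho$ alone is stronger than what is actually verified and not obviously true under the constraint $f_1\equiv0$; the sequential argument suffices and avoids this issue. Your heuristic for the extra degree drop when $n$ is even (the parity of $i+j$ forces the parity of the exponent of $r$) is precisely the mechanism behind the paper's computation of $r f_{20}(r)$.
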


Theorem \ref{th6} is proved in Section \ref{phi=pi}.

Comparing itens $(a)$ and $(b)$ of Theorem \ref{th6}, we can easily check that $N_2(m,n,\pi)\geq N_1(m,n,\pi)$ for every $0\leq m\leq d$ and $n\in\mathbb{N}$, with strictly inequality for $n\neq 1$.

When $\phi = 2\pi$, system \eqref{As1} is continuous. In this case
$\X(x,y,\mathrm{z}) = X^+(x,y,\mathrm{z})$ and $\Y(x,y,\mathrm{z})=
Y^+(x,y,\mathrm{z})$. So, we get the following result.

\begin{theorem}\label{th7}
Assume that $0\leq m\leq d$ and $\phi=2\pi$. Then, 
\begin{itemize}
\item[$(a)$] $N_1(m,n,2\pi)=n^{m}(n-1)/2$ for all $m\neq 0$, and
$$
N_1(0,n,2\pi)= \left\{
\begin{array}{l}
\dfrac{n-1}{2} \quad
\text{if $n$ is odd},\vspace{0.1cm}\\
\dfrac{n-2}{2} \quad \text{if $n$ is even}.
\end{array}\right.
$$
\item[$(b)$]  $n \leq N_2(0,n,2\pi) \leq 2n$.
\end{itemize}
\end{theorem}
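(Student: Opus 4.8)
The plan is to reduce system \eqref{As1} with $\phi=2\pi$ to the setting of Theorem \ref{BRt1} and then to read off the number and the degrees of the zeros of the averaged functions. First I would pass to the cylindrical coordinates $x=r\cos\T$, $y=r\sin\T$, $\mathrm{z}=\mathrm{z}$, and take $\T$ as the new independent variable (legitimate in a neighbourhood of the periodic orbits because $\dot\T=1+\CO(\e)$). Since $\phi=2\pi$ one has $X^+\equiv X^-$ and $Y^+\equiv Y^-$, so the resulting system is a \emph{continuous} $(d+1)$-dimensional $2\pi$-periodic system of the form \eqref{BRs1} (with $F^+\equiv F^-$), whose unperturbed part is $r'=0$, $z_\rho'=0$ for $\rho\le m$ and $z_j'=\mu_j z_j$ for $j>m$. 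Hence the manifold of periodic orbits is $\CZ=\{(r,z_1,\dots,z_m,0,\dots,0)\}$, i.e.\ $q=m+1$, $p=d+1$, and hypothesis $(H)$ holds on any bounded region with $r>0$. The variational equation \eqref{lin} has the constant diagonal fundamental matrix $Y^{\pm}(\T,\z)=\mathrm{diag}(1,\dots,1,e^{\mu_{m+1}\T},\dots,e^{\mu_d\T})$, so $Y^+(2\pi,\nu)-Y^-(0,\nu)$ has the block structure required in Theorem \ref{BRt1}, with $\Delta_{\nu}=\mathrm{diag}(e^{2\pi\mu_{m+1}}-1,\dots,e^{2\pi\mu_d}-1)$, invertible because all $\mu_j\ne0$. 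Thus Theorem \ref{BRt1} (equivalently, in this continuous case, the averaging of \cite{BL,LNT}; and for $m=d$, Corollary \ref{cor2}) applies, and $N_i(m,n,2\pi)$ is the maximal number of simple zeros of $f_i$ with $r>0$.

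Next I would compute $f_1$ explicitly. Evaluating \eqref{yi} on $\CZ$, where $\f^{\pm}(s,\z_{\nu})$ is constant in $s$ and $Y^{\pm}$ is the diagonal matrix above, the components of $f_1$ become integrals over $[0,2\pi]$ of the first-order term of the transformed field: the $r$-component comes from $(xX_{a^{+}}+yX_{b^{+}})/r$ and the $z_\rho$-component ($\rho\le m$, so $\mu_\rho=0$) from $X_{c_\rho^{+}}$. Using that $\int_0^{2\pi}\cos^i\T\,\sin^j\T\,d\T=0$ unless $i$ and $j$ are both even, one finds that the $r$-component of $f_1$ is an \emph{odd} polynomial in $r$ and each $z_\rho$-component is an \emph{even} polynomial in $r$, with the degree restrictions inherited from $\deg X_{\la}=n$. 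Putting $w=r^2$ and discarding the harmless factor $r$ in the first equation, the zero set of $f_1$ with $r>0$ is the solution set of a polynomial system in $(w,z_1,\dots,z_m)$ whose $m+1$ equations, with respect to the weights $(2,1,\dots,1)$, have weighted degrees $n-1,n,\dots,n$. The weighted B\'ezout bound then gives $(n-1)n^m/2$, which equals $\lfloor(n-1)/2\rfloor$ when $m=0$; this yields the upper bounds in $(a)$. For the matching lower bound I would prescribe the (finitely many, free) surviving trigonometric coefficients so that the averaged system decouples into independent linear equations in $w$ and in the $z_\rho$, producing the asserted number of simple zeros with $w>0$; this recovers and slightly extends \cite{LTZ}.

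Finally, for statement $(b)$ (with $m=0$, so $\CZ$ is one-dimensional and all $\mu_j\ne0$) I would impose $f_1\equiv0$, which amounts to killing the surviving trigonometric coefficients of $X_{a^{+}}$ and $X_{b^{+}}$, and then compute $f_2$ from \eqref{fi}–\eqref{yi}. Because the unperturbed field is linear in $(r,\mathrm{z})$ one has $\p^2 F_0^{\pm}/\p\x^2\equiv0$ and $\p^2\xi g_0/\p v^2\equiv0$, so \eqref{fi} reduces to $f_2=2\,(\p\xi g_1/\p v)(\z_{\nu})\,\gamma(\nu)+2\,\xi g_2(\z_{\nu})$ with $\gamma$ given by \eqref{gamma}; here $g_2$ still contains the products $F_1^{+}\,y_1^{+}$ and the term $(xX_{a^+}+yX_{b^+})(xX_{b^+}-yX_{a^+})/r^3$ coming from the $\CO(\e^2)$ part of the time rescaling, which roughly double the admissible degree in $r$. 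Tracking parities in these iterated trigonometric integrals (paying attention to the contributions of the non-averaged, $i$- or $j$-odd, monomials to $y_1^{+}$) shows that $\xi g_2$, and hence $f_2$, is a polynomial in $r$ of degree at most $2n$, whence $N_2(0,n,2\pi)\le 2n$; an explicit choice of $X_{c_\rho^{+}}$, $Y_{\al^+}$, $Y_{\be^+}$, $Y_{\gamma_\rho^+}$ is then used to force $n-1$ positive simple zeros when $n$ is odd and $n$ when $n$ is even, giving the lower bound $N$.

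I expect the main obstacle to be the lower bounds, that is, exhibiting explicit perturbation polynomials for which the averaged system attains the B\'ezout (resp.\ degree) count with all its zeros simple and lying in $r>0$, together with the bookkeeping of degrees and parities in the iterated integrals defining $y_2^{\pm}$ in \eqref{yi}; the passage to the weighted variable $w=r^2$ and the vanishing of the trigonometric integrals are the two structural facts that make everything else routine.
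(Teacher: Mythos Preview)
Your proposal is correct and follows essentially the same route as the paper: reduction to Theorem~\ref{BRt1} via cylindrical coordinates, the parity of $\int_0^{2\pi}\cos^i\T\sin^j\T\,d\T$ forcing only even powers of~$r$ in the components of $f_1$, a B\'ezout count for the upper bounds (your weighted B\'ezout with weights $(2,1,\dots,1)$ is a tidier phrasing of the paper's informal ``halving'' argument in Proposition~\ref{prop20}), and explicit decoupled examples for the lower bounds. Two small slips to correct: in~$(a)$ the decoupled example is not a system of ``independent linear equations'' but of univariate polynomials of degrees $n-1,n,\dots,n$ in the separate variables $w,z_1,\dots,z_m$; and in~$(b)$ it is $r\,f_2(r)$, not $f_2$ itself, that is the polynomial of degree at most~$2n$ (the $1/r$ in $F_{20}^+$ from the time rescaling is what produces the extra factor).
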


Theorem \ref{th7} generalizes the particular cases $m=d=0$ and
$m=d=1$  of \cite{CLT} (see Theorems 2 and 3). Moreover, statement
$(a)$ of Theorem \ref{th7} also generalizes Theorem 1 of \cite{LTZ}
when $m=d$. We prove Theorem \ref{th7} in Section \ref{phi=2pi}.

\section{Preliminary results}\label{sec:pr}

In this section we present some preliminaries results that we shall
need in Sections \ref{phineqpi}, \ref{phi=pi} and \ref{phi=2pi}. In
Section \ref{sub3.1}, we present a change of coordinates so that
system \eqref{As1} reads in the standard form \eqref{BRs1} to apply the averaging method. In Section
\ref{sub3.2}, we construct the averaging functions $f_1$ and $f_2$
for system \eqref{As1}, defined in \eqref{fi}. Finally, in
Section \ref{sub.trig}  we present some trigonometric relations
that will be used in the calculus of the zeros of the functions
$f_1$ and $f_2$.

\subsection{Standard form}\label{sub3.1}

Let $x,y\in\R$ and ${\rm z}=(z_1,\ldots,z_d)\in\R^d$. Using the
change of variables
\begin{equation}
\label{cyl} x=r\cos\T \quad \text{and} \quad y=r\sin\T ,
\end{equation}
with $r\in \R_+$ and  $\T \in \s^1\equiv  \R/(2\pi \Z)$, system
\eqref{As1} becomes
\begin{equation}
\label{As2} \left(\dot \T,\dot r,\dot{{\rm z}}
\right)=(1,0,\widetilde M{\rm z} ) +\e A(\T,r,{\rm z})+  \e^2
B(\T,r,{\rm z}),
\end{equation}
where $A, B:  \s^1 \times \R_+ \times \R^d \to \R^{d+2}$ are
piecewise smooth functions given by
$$
A=\left\{\begin{array}{l}
A^+ \quad \textrm{if}\quad 0\leq\T\leq\phi, \vspace*{-0.2cm}\\
A^- \quad \textrm{if} \quad\phi\leq\T\leq 2\pi,
\end{array}\right.  \quad \textrm{and} \quad
B=\left\{\begin{array}{l}
B^+ \quad \textrm{if}\quad  0\leq\T\leq\phi, \vspace*{-0.2cm}\\
B^- \quad \textrm{if} \quad\phi\leq\T\leq 2\pi,
\end{array}\right.
$$
where
\begin{equation*}
\begin{array}{ll}
A^\pm(\T,r,{\rm z})=& (A^\pm_1(\T,r,{\rm z}), \ldots, A^\pm_{d+2}(\T,r,{\rm z})), \\
B^\pm(\T,r,{\rm z})=& (B^\pm_1(\T,r,{\rm z}),  \ldots,
B^\pm_{d+2}(\T,r,{\rm z})),
\end{array}
\end{equation*}
with
\begin{equation}
\label{AB}
\begin{array}{ll}
A^\pm_1 = & \dfrac{1}{r} \big( X_{b^\pm} \left(r\cos\T,r\sin\T,{\rm z} \right)
\cos\T  - X_{a^\pm} \left(r\cos\T, r\sin\T, {\rm z}\right)\sin\T \big), \vspace*{0.15cm}\\
B^\pm_1 = & \dfrac{1}{r} \big( Y_{\beta^\pm}
\left(r\cos\T,r\sin\T,{\rm z} \right)
\cos\T  - Y_{\al^\pm} \left(r\cos\T, r\sin\T, {\rm z} \right)\sin\T \big), \\
A^\pm_2  =& X_{a^\pm}\left(r\cos\T,r\sin\T,z\right) \cos\T +  X_{b^\pm} \left(
r\cos\T, r\sin\T,{\rm z} \right)\sin\T , \\
B^\pm_2 = & Y_{\alpha^\pm}\left(r\cos\T,r\sin\T,{\rm z} \right)
\cos\T  +
Y_{\beta^\pm} \left(r\cos\T, r\sin\T, {\rm z}\right)\sin\T , \\
A^\pm_{\ell+2}   = & X_{c_{\ell}^\pm}(r\cos\T, r\sin\T,{\rm z}), \\
B^\pm_{\ell+2}  =& Y_{\gamma_\ell^\pm}(r\cos\T, r\sin\T,{\rm z}),
\end{array}
\end{equation}
for $1 \leq \ell \leq d$.  Clearly the discontinuity $\Sigma$ is now
given by
$$
\Sigma= \{(0,r,{\rm z}): r\in \R_+,{\rm z}\in\R^d\} \cup \{
(\phi,r,{\rm z}):r\in\R_+, {\rm z}\in\R^d\}.
$$

Taking the angle $\T$ as the new time, system \eqref{As2} reads
\begin{equation}
\label{eq.sys1}
\begin{array}{l}
r'=\dfrac{\dot r}{\dot \T}=\dfrac{\e A_2(\T,r,{\rm z}) + \e^2
B_2(\T,r,{\rm z})}
{1+\e A_1(\T,r,{\rm z}) + \e^2 B_1(\T,r,{\rm z})},\vspace{.3cm} \\
z'_\ell= \dfrac{\dot{z}_\ell}{\dot{\T}} = \dfrac{ \mu_\ell z_\ell +
\e A_{\ell+2}(\T,r,{\rm z}) +\e^2 B_{\ell+2}(\T,r,{\rm z})} {1+\e
A_1(\T,r,{\rm z}) + \e^2 B_1(\T,r,{\rm z})},
\end{array}
\end{equation}
for $1 \leq \ell \leq d$.  Note that now the prime denotes
derivative with respect to the independent variable $\T$.

Expanding system \eqref{eq.sys1} in Taylor series around $\e=0$,  it
can be written as system \eqref{BRs1} by taking ${\bf x}=(r,{\rm z})
\in D \subset \R_+ \times \R^{d}$ and
\begin{equation}\label{F}
F_{j}^\pm(\T,r,{\rm z}) = (F_{j0}^\pm(\T,r,{\rm z}), \ldots,
F_{jd}^\pm(\T,r,{\rm z})  ), \quad \text{for} \quad j=0,1,2,
\end{equation}
where
\begin{equation}
\label{FF}
\begin{array}{ll}
F_{0\ell}^\pm(\T,r,{\rm z}) =& 0, \\
F_{0\l}^\pm(\T,r,{\rm z}) = &\mu_\l z_\l, \\
F_{1\ell}^\pm(\T,r,{\rm z}) =& A^\pm_{\ell+2}(\T,r,{\rm z}) ,\\
F_{1\l}^\pm(\T,r,{\rm z}) =& A^\pm_{\l +2}(\T,r,{\rm z}) - \mu_
\l z_{\l} A^\pm_1(\T,r,{\rm z})  ,\\
F_{2\ell}^\pm(\T,r,{\rm z}) =&  B^\pm_{\ell+2}(\T,r,{\rm z})
- A^\pm_1(\T,r,{\rm z}) A^\pm_{\ell+2}(\T,r,{\rm z}) ,\\
F_{2\l}^\pm(\T,r,{\rm z}) =& B^\pm_{\l +2}(\T,r,{\rm z})+  \mu_\l z_
{\l } \big(A^\pm_1(\T,r,{\rm z})\big)^2 \\
&- A^\pm_1(\T,r,{\rm z}) A^\pm_{\l +2}(\T,r,{\rm z})- \mu_\l
z_{\l}B^\pm_1(\T,r,{\rm z})  ,
\end{array}
\end{equation}
for $0 \leq \ell \leq m$ and $m+1 \leq \l \leq d$.

When $m=d$ the functions $F^\pm_{j\l}$, for $j=0,1,2$, are not
considered.

\subsection{Construction of the averaging functions}\label{sub3.2}

Now, we shall use the notations introduced in subSection \ref{sub12}.
Since the unperturbed system \eqref{ups} is continuous,  we have
$\varphi^+(\T,{\z})=\varphi^-(\T,{\z})$. Therefore, when $0 \leq m
<d$ the solution of system \eqref{ups} is given by
$$
\varphi(\T,{\z})=(r, z_1, \ldots, z_{m}, e^{ \mu_{m+1}\T} z_{m+1}, \ldots,
e^{\mu_d \T} z_d),
$$
for $\z = (r, {\rm z}) =(r,z_1, \ldots, z_d)$. Note that if
$\z_\nu=(r,z_1,\ldots,z_m,0,\ldots,0)$  then $\varphi(\T,{\z_\nu}) =
\z_\nu$ for every $\T\in\s^1$.  Then, taking an open bounded subset
$V\subset \R^{m+1}$ and the zero function $\sigma: \overline V \to
\R^{d-m}$, the manifold $\CZ$, defined in \eqref{Z}, becomes
$$
\CZ= \lbrace \z_{\nu} =(\nu,0) \in \R^{d+1}: \nu=(r,z_1,\ldots,z_m)
\in  \overline V\rbrace.
$$

For $\z \in D$ a fundamental matrix of system \eqref{lin} is
$$
Y(\T,\z)= \begin{pmatrix}
{\rm Id}_{1+m} &  0 \vspace*{-0.2cm}\\
0&  \Delta
\end{pmatrix},
$$
where ${\rm Id}_{1+m}$ is the $(1+m)\times(1+m)$ identity matrix,
and $\Delta$ is the diagonal matrix diag$(e^{\mu_{m+1} \theta},
\ldots,e^{\mu_{d} \theta})$. Since $Y(\T,\z)$ does not depend of
$\z$  we denote  $Y(\T,\z)=Y(\theta)$. Then, we have
$$
Y(\phi)-Y(\phi - 2\pi) = \left(
\begin{array}{ll}
0 & 0 \vspace*{-0.2cm} \\
0 & \Delta_\nu
\end{array} \right),
$$
where 
\begin{equation} \label{deltanu}
\Delta_\nu = \text{diag}\big(e^{\mu_{m+1}\phi}(1-
e^{-\mu_{m+1}2\pi}), \ldots, e^{\mu_{d}\phi}(1-
e^{-\mu_{d}2\pi})\big).
\end{equation}

According to the notation introduced in Theorem \ref{BRt1} we have
$p=d+1$ and $p-q=d-m$, with $q=m+1$. Since $\CZ$ has dimension
$m+1$, we consider the projections $\xi: \R^{m+1} \times \R^{d-m}
\to \R^{m+1}$ and $\xi^{\bot}: \R^{m+1} \times \R^{d-m} \to
\R^{d-m}$, with $u=(r,z_1, \ldots, z_m) \in \R^{m+1}$ and
$v=(z_{m+1}, \ldots, z_d) \in \R^{d-m}$.

{From} \eqref{yi}  and \eqref{FF} we  have $y_1(\theta,\z)= \big(
y_{10}(\theta,\z), \ldots, y_{1d}(\theta,\z) \big)$ where
\begin{equation}
\label{yii}
\begin{array}{ll}
y_{1\ell}^\pm(\theta,\z)=& \dis \int_0^\T  A_{\ell+2}^\pm (s,\varphi(s,\z))
ds,\vspace*{0.15cm}\\
y_{1\l}^\pm(\theta,\z)=& \dis \int_0^\T  e^{\mu_\l(\T-s)} \big(
A_{\l+2}^\pm (s,\varphi(s,\z))  -  \mu_\l z_{\l
}A_1^\pm(s,\varphi(s,\z)) \big)ds,
\end{array}
\end{equation}
for $0 \leq \ell \leq m$ and $m+1 \leq \l \leq d$. 

Moreover, from
\eqref{gi} we have $g_1(\z_\nu)= \big( g_{10}(\z_\nu), \ldots, g_{1d}(\z_\nu)
\big)$ with
\begin{equation}
\label{gl}
\begin{array}{ll}
g_{1\ell}(\z_\nu)= & \dis \int_0^\phi A_{\ell+2}^+(s,\varphi(s,\z_\nu))\,ds
+ \int_\phi^{2\pi} A_{\ell+2}^-(s,\varphi(s,\z_\nu))\,ds,\vspace*{0.15cm}\\
g_{1\l}(\z_\nu)= &  \dis  \int_0^\phi e^{\mu_\l(\phi -s)}
A_{\l+2}^+(s,\varphi(s,\z_\nu)) \, ds 
+  \int_\phi^{2\pi} e^{\mu_\l(\phi - 2\pi -s)} 
A_{\l+2}^-(s,\varphi(s,\z_\nu)) \, ds
\end{array}
\end{equation}
for $0 \leq \ell \leq m$ and $m+1 \leq \l \leq d$. 

Therefore, the
bifurcation function $f_1: \overline{V}\to \R^{m+1}$, defined in
\eqref{fi},  is given by
\begin{equation}
\label{f1} f_1(\nu)=  \xi g_1(\z_\nu)= \big( f_{10}(\nu) ,\ldots,
f_{1m}(\nu) \big),
\end{equation}
with $f_{1\ell}(\nu)=g_{1\ell}(\z_\nu)$, where $g_{1\ell}$ is given
in \eqref{gl} for $0 \leq \ell \leq m$.

Now, we compute the bifurcation  function $f_2$ defined also in
\eqref{fi}.

Since $g_0$ is linear (see \eqref{yi} and \eqref{gi}) we have
$\dis\frac{\partial^2 \xi g_0}{\partial v^2} (\z_\nu) = 0$.

Moreover, as $\xi^\perp g_1(\z_\nu) = \big( g_{1\, m+1}(\z_\nu), \ldots,
g_{1\, d} (\z_\nu)\big)$, it follows from \eqref{gamma}, \eqref{deltanu} and \eqref{gl}
that  
$$ 
\gamma(\nu)=  \big(\gamma_{m+1}(\nu), \ldots,
\gamma_{d}(\nu) \big),$$
where
\begin{equation}
\label{gammaw}
\begin{array}{l}
\gamma_{\l}(\nu) =   \dfrac{-1}{1 - e^{-\mu_\l 2\pi}} \bigg(
\dint_0^\phi e^{- \mu_\omega s} A_{\l +2}^+  (s,\z_\nu)ds  + \dint_\phi^{2\pi}
e^{-\mu_\omega(2\pi+s)} A_{\l +2}^-(s,\z_\nu)ds\bigg),
\end{array}
\end{equation}
for $m+1\leq \l \leq d$. Furthermore, for $v=(z_{m+1}, \ldots, z_d)$
we have
$$ \dfrac{\partial \xi g_1}{\partial v}
(\z_\nu) \gamma(\nu ) = \big( \widetilde{G}_{10}(\nu) , \ldots,
\widetilde{G}_{1m} (\nu)\big),$$ with
\begin{equation}
\label{G1l} \widetilde{G}_{1\ell}  (\nu)= \dis \sum_{\l=m+1}^{d}
\frac{\partial  g_{1\ell}}{\partial z_{\l}} (\z_\nu)
\gamma_{\l}(\nu),
\end{equation}
where  $g_{1\ell}$ is given in \eqref{gl} for $0 \leq \ell \leq m$.
Additionally from \eqref{gi} and \eqref{yi} we obtain
$$
\xi g_2(\z_\nu)= \xi  (y^+_2(\phi,\z_\nu)) -\xi  (y^-_2 (\phi-
2\pi,\z_\nu)),
$$
where
$$
\xi y^\pm_2(\theta,\z_\nu) =  2 \int_0^\T  \xi
\big(F_2^\pm(s,\z_\nu) \big) +  \xi \bigg(\dis \frac{\p F_1^\pm}{\p
{\bf x}} (s,\z_\nu) y_1^\pm (s,\z_\nu) \bigg) ds,
$$
because  $F_0^\pm$ is linear.

On the other hand
$$
\xi F_2^\pm(s,\z_\nu)= \big({F}_{20}^\pm (s,\z_\nu), \ldots,
{F}_{2m}^\pm (s,\z_\nu) \big),~\text{and}
$$
$$
\xi \bigg( \dis\frac{\p F_1^\pm}{\p {\bf x}} (s,\z_\nu)
y_1^\pm(s,\z_\nu) \bigg) = \big( \widetilde{F}^\pm_{10}(s,\z_\nu) ,
\ldots,  \widetilde{F}^\pm_{1m} (s,\z_\nu) \big),
$$
being
\begin{equation}
\label{Ftil} \widetilde{F}^\pm_{1\ell}(s,\z_\nu) = \dis \frac{\p
F^\pm_{1\ell}}{\p r}(s,\z_\nu)y^\pm_{10}(s,\z_\nu) +     \ldots +
\frac{\p F^\pm_{1\ell}}{\p z_d}(s,\z_\nu)y^\pm_{1d}(s,\z_\nu),
\end{equation}
for $F^\pm_{1\ell}$ and $F^\pm_{2\ell}$ defined in \eqref{FF} for $0
\leq \ell \leq m$. Hence
\begin{equation}\label{f2}
f_2(\nu)= 2\, \dfrac{\p \xi g_1}{\p v}(\z_{\nu})\gamma(\nu) + 2\,
\xi g_2(\z_{\nu})  = \big( f_{20}(\nu), \ldots, f_{2m}(\nu)  \big),
\end{equation}
where
\begin{equation}
\label{f2l1}
\begin{array}{ll}
f_{2\ell}(\nu) =&    2\, \dis\widetilde{G}_{1\ell}(\nu) +\dis  4\int_{0}^{\phi}
\big(F^+_{2\ell}(s,\z_\nu)  + \widetilde{F}^+_{1\ell}(s,\z_\nu) \big)ds
\vspace*{0.15cm}\\
& + ~4\dis\int_{\phi}^{2\pi} \big(F_{2\ell}^-(s,\z_\nu) +
\widetilde{F}^-_{1\ell}(s,\z_\nu)\big) ds,
\end{array}
\end{equation}
for $0\leq \ell \leq m$. See the explicit expression of all
functions that appear in \eqref{f2l1} in the Appendix.

If  $m=d$, then  the functions $\widetilde{G}_{1\ell}(\nu)$ are not
considered because $f_2 =2 g_2$ (see Corollary \ref{cor2}).

\subsection{Some trigonometric integrals}\label{sub.trig}

In order to study the zeros of the averaging functions $f_1$ and
$f_2$, we need to know some results about trigonometric integrals.
Then, we shall state Lemma \ref{lemtrig}. The proof of this lemma
will be omitted here, but it can easily be proven using some
trigonometric relations found in Chapter 2 of \cite{gr}.

For $p,q \in \N$ and $\phi \in (0,2\pi]$ consider the  functions
\begin{equation} \label{IJ}
I_{(p,q,\phi)} = \dint_0^\phi \cos^{p} s\sin^q s  \,ds,\quad
J_{(p,q,\phi)} = \dint_\phi^{2\pi} \cos^{p} s \sin^q s \,ds.
\end{equation}

\begin{lemma}\label{lemtrig}
Let $I_{(p,q,\phi)}$ and $J_{(p,q,\phi)}$ be the functions defined
in \eqref{IJ} for $\phi \in (0,2\pi]$. Then, the following statements
hold.
\begin{itemize}
\item [$(a)$] If $\phi \neq \pi$ and  $\phi \neq 2 \pi$ then $I_{(p,q,
\phi)}$,  $J_{(p,q,\phi)}$, $\dis\int_0^\phi \cos^i s \sin^j s \,
I_{(p,q,\phi)} \, ds$, and  $\dis\int_\phi^{2\pi} \cos^i s \sin^j s
\, I_{(p,q,\phi)} \, ds$ are non--zero;\vspace*{0.3cm}

\item [$(b)$] If $\phi =\pi$ then $I_{(p,q,\pi)}=0$ or $J_{(p,q,\pi)} =
0$  if and only if $p$ is odd.

Moreover
$$
\dis\int_0^\pi \cos^i s \sin^j s \,I_{(p,q, s)} \,ds =0~~~ \text{or}~~
\dis\int_\pi^{2\pi} \cos^i s \sin^j s\,  I_{(p,q,s)} \,ds=0
$$
if and only if one of the following statements hold:
\begin{itemize}
\item [$(i)$] $i$, $j$, $p$ and $q$ are odd;
\item [$(ii)$] $i$, $p$ and $q$ are odd, and $j$ is even;
\item [$(iii)$] $i$ and $p$ are odd, and $q$ and $j$ are  even;
\item [$(iv)$] $i$, $p$ and $j$ are odd, and $q$ is even.
\end{itemize}
\vspace*{0.3cm}
\item [$(c)$] If $\phi =2\pi$ then $I_{(p,q,2\pi) } \neq  0$ if and
only if $p$ and $q$ are simultaneously even.
\end{itemize}
\end{lemma}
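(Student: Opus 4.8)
The proof combines four ingredients: classical closed forms for elementary trigonometric integrals (Chapter~2 of \cite{gr}), reflections of the interval of integration, positivity of manifestly nonnegative integrands, and one Fubini-type identity. The starting point is the power-reduction formula: expanding $\cos^p s\sin^q s=\big((e^{is}+e^{-is})/2\big)^p\big((e^{is}-e^{-is})/(2i)\big)^q$ one gets $\cos^p s\sin^q s=\sum_k(\alpha_k\cos ks+\beta_k\sin ks)$, where every frequency $k$ satisfies $k\equiv p+q\pmod2$, the sum consists of pure cosines together with the constant $\alpha_0$ when $q$ is even and of pure sines when $q$ is odd, and $\alpha_0\neq0$ exactly when $p$ and $q$ are both even. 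Statement $(c)$ follows at once, since $I_{(p,q,2\pi)}=2\pi\alpha_0$: if $p,q$ are both even the integrand is $\geq0$ and not identically zero, so $\alpha_0>0$; if $q$ is odd the substitution $s\mapsto-s$, and if $p$ is odd the substitution $s\mapsto\pi-s$, reverses the sign of $\cos^p s\sin^q s$ while fixing $[0,2\pi]$ modulo $2\pi$, forcing $\alpha_0=0$.

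For the single integrals in $(b)$ the substitution $s\mapsto\pi-s$ gives $I_{(p,q,\pi)}=(-1)^p I_{(p,q,\pi)}$, hence $I_{(p,q,\pi)}=0$ when $p$ is odd, while for $p$ even the integrand is $\geq0$ on $(0,\pi)$ and not identically zero, so $I_{(p,q,\pi)}>0$; and $J_{(p,q,\pi)}=I_{(p,q,2\pi)}-I_{(p,q,\pi)}$ equals $-I_{(p,q,\pi)}$ when $p$ is odd (by $(c)$) and is positive when $p$ is even. For $(a)$ I would prove the stronger \emph{auxiliary fact}: for all $p,q\in\N$, $I_{(p,q,s)}>0$ for every $s\in(0,\pi)$, and $I_{(p,q,s)}$ has a constant nonzero sign on $(\pi,2\pi)$. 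Indeed $\frac{d}{ds}I_{(p,q,s)}=\cos^p s\sin^q s$ has, on $(0,\pi)$, the sign of $\cos s$ when $p$ is odd and is $\geq0$ when $p$ is even; with $I_{(p,q,0)}=0$ and $I_{(p,q,\pi)}=0$ for $p$ odd, monotonicity on $(0,\tfrac\pi2)$ and on $(\tfrac\pi2,\pi)$ yields $I_{(p,q,s)}>0$ on $(0,\pi)$, and on $(\pi,2\pi)$ the substitution $t=s-\pi$ gives $I_{(p,q,s)}=I_{(p,q,\pi)}+(-1)^{p+q}I_{(p,q,s-\pi)}$, which keeps a constant sign there. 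This auxiliary fact is precisely the non-vanishing of $I_{(p,q,\phi)}$ on $(0,2\pi)\setminus\{\pi\}$; then $J_{(p,q,\phi)}=I_{(p,q,2\pi)}-I_{(p,q,\phi)}$ is nonzero (by $(c)$ if $p$ or $q$ is odd, by strict monotonicity if both are even), and the remaining two quantities in $(a)$ are nonzero because they equal the products $I_{(p,q,\phi)}\,I_{(i,j,\phi)}$ and $I_{(p,q,\phi)}\,J_{(i,j,\phi)}$.

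It remains to treat the nested integrals in $(b)$. Put $\mathcal I=\int_0^\pi\cos^i s\sin^j s\,I_{(p,q,s)}\,ds$ and $\mathcal J=\int_\pi^{2\pi}\cos^i s\sin^j s\,I_{(p,q,s)}\,ds$. Viewing $\mathcal I$ and $\widehat{\mathcal I}:=\int_0^\pi\cos^p s\sin^q s\,I_{(i,j,s)}\,ds$ as integrals of the symmetric kernel $\cos^i s\sin^j s\,\cos^p t\sin^q t$ over the two triangles of $[0,\pi]^2$, we obtain $\mathcal I+\widehat{\mathcal I}=I_{(i,j,\pi)}\,I_{(p,q,\pi)}$; and substituting $s=\pi+u$ and using $I_{(p,q,\pi+u)}=I_{(p,q,\pi)}+(-1)^{p+q}I_{(p,q,u)}$ gives $\mathcal J=(-1)^{i+j}I_{(p,q,\pi)}I_{(i,j,\pi)}+(-1)^{i+j+p+q}\mathcal I$. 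Now I split on the parities of $i$ and $p$. If $i$ is even, $\cos^i s\sin^j s$ has a constant sign on $(0,\pi)$ and another on $(\pi,2\pi)$, so by the auxiliary fact both $\mathcal I\neq0$ and $\mathcal J\neq0$. If $i$ and $p$ are both odd, then $I_{(p,q,s)}=P(\sin s)$ is a polynomial with $P(0)=0$ and $\cos^i s\,ds=(1-\sin^2 s)^{(i-1)/2}\,d(\sin s)$, so $\mathcal I=\int_{\sin 0}^{\sin\pi}(1-u^2)^{(i-1)/2}u^j P(u)\,du=0$ by the fundamental theorem of calculus (the endpoints coincide). If $i$ is odd and $p$ is even, then $I_{(i,j,\pi)}=0$, so the first identity gives $\mathcal I=-\widehat{\mathcal I}$; but $\cos^p s\geq0$, $\sin^q s>0$ and $I_{(i,j,s)}>0$ on $(0,\pi)$, hence $\widehat{\mathcal I}>0$, so $\mathcal I\neq0$, and then $\mathcal J=(-1)^{i+j+p+q}\mathcal I\neq0$. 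Therefore $\mathcal I=0$ or $\mathcal J=0$ holds precisely when $i$ and $p$ are both odd, and this is exactly the union of the alternatives $(i)$--$(iv)$, since those four list the four parity choices for $(q,j)$ under the standing hypothesis that $i$ and $p$ are odd. The one point requiring care is the mixed-parity case $i$ odd, $p$ even: there neither a reflection nor a direct positivity estimate applies to $\mathcal I$ itself, and one must first trade $\mathcal I$ for its symmetric partner $\widehat{\mathcal I}$ through the Fubini identity; one must also verify, as above, that the four configurations $(i)$--$(iv)$ together capture exactly the condition ``$i$ odd and $p$ odd'', so that no vanishing is overlooked.
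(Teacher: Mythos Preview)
Your argument is correct. The paper does not actually prove this lemma: it only remarks that the result ``can easily be proven using some trigonometric relations found in Chapter~2 of \cite{gr}'' and omits all details. So there is nothing to compare your approach against; you have filled in what the authors left out.

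A few comments on your write-up. Your auxiliary fact (positivity of $I_{(p,q,s)}$ on $(0,\pi)$ and constant sign on $(\pi,2\pi)$) is the right organizing principle, and your case split on the parity of $p$ combined with the shift $s\mapsto s-\pi$ handles it cleanly. The Fubini identity $\mathcal I+\widehat{\mathcal I}=I_{(i,j,\pi)}I_{(p,q,\pi)}$ is exactly the right trick for the asymmetric case $i$ odd, $p$ even, where neither a reflection nor direct positivity applies to $\mathcal I$ itself; trading $\mathcal I$ for $\widehat{\mathcal I}$ restores a positive integrand. Your observation that $I_{(p,q,s)}$ is a polynomial in $\sin s$ when $p$ is odd (via the antiderivative substitution $u=\sin t$) makes the vanishing in cases $(i)$--$(iv)$ transparent. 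Finally, you are right that the four alternatives $(i)$--$(iv)$ are simply the four parity choices for $(j,q)$ under ``$i$ and $p$ both odd''; collapsing them to that single condition is a useful clarification of the statement.

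One small remark: in part~$(a)$ as stated, the integrand carries the constant $I_{(p,q,\phi)}$, not $I_{(p,q,s)}$, so the last two quantities really do factor as $I_{(p,q,\phi)}I_{(i,j,\phi)}$ and $I_{(p,q,\phi)}J_{(i,j,\phi)}$, exactly as you say. The nested version with $I_{(p,q,s)}$ inside would in fact be false for general $\phi\in(0,2\pi)\setminus\{\pi\}$ (for instance $\int_0^\phi s\cos s\,ds=\phi\sin\phi+\cos\phi-1$ vanishes somewhere in $(\pi/2,\pi)$), so your literal reading of the statement is the correct one.
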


\section{Proof of Theorem \ref{BRt1}}\label{secth1}

The proof of Theorem \ref{BRt1} is based on the next lemma which is
a particular case of the {\it Lyapunov-Schmidt reduction} for a
finite dimensional function (see for instance \cite{C}).

\begin{lemma}\label{LS}
Assuming $q\leq p$ are positive integers, let $D$ and $V$ be open
bounded subsets of $\R^p$ and $\R^q$, respectively. Let $g:D\times
(-\e_0,\e_0)\rightarrow\R^p$ and $\sigma:\ov V\rightarrow \R^{p-q}$
be $\C^3$ functions such that $g(\z,\e)=g_0(\z)+\e g_1(\z)+ \e^2
g_2(\z)+\CO(\e^3)$ and $\CZ=\{\z_{\nu}=(\nu,\sigma(\nu)):\,
\nu\in\ov V\}\subset D$. We denote by $\G_{\nu}$ the upper right
corner $q\times (p-q)$ matrix of $D\,g_0(\z_{\nu})$, and by
$\Delta_{\nu}$ the lower right corner $(p-q)\times (p-q)$ matrix of
$D\,g_0(\z_{\nu})$. Assume that for each $\z_{\nu}\in \CZ$,
$\det(\Delta_{\nu})\neq0$ and $g_0(\z_{\nu})=0$. We consider the
functions $f_1, f_2:\ov V\rightarrow \R^q$ defined in \eqref{fi}.
Then, the following statements hold.
\begin{itemize}
\item[$(a)$] If there exists ${\nu^*}\in V$ with $f_1({\nu^*})=0$ and
$\det(D\,f_1({\nu^*}))\neq0$, then there exists $\nu_{\e}$ such that
$g(\z_{\nu_{\e}},\e)=0$ and $\z_{\nu_{\e}}\to \z_{\nu^*}$ when
$\e\to0$.

\item[$(b)$] Assume that $f_1=0$.  If there exists ${\nu^*} \in V$
with $f_2({\nu^*})=0$ and $\det(D\,f_2({\nu^*})) \neq0$, then there
exists $\nu_{\e}$ such that $g(\z_{\nu_{\e}}, \e)=0$ and
$\z_{\nu_{\e}}\to \z_{\nu^*}$ when $\e\to0$.
\end{itemize}
\end{lemma}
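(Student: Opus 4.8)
\textbf{Proof proposal for Lemma \ref{LS}.}

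The plan is to reduce the problem of finding zeros of $g(\cdot,\e)$ near $\CZ$ to finding zeros of a lower-dimensional function on $\ov V$, via the classical Lyapunov--Schmidt procedure, and then to identify the bifurcation functions arising at first and second order in $\e$ with the functions $f_1$ and $f_2$ defined in \eqref{fi}. Write $\z=(u,v)\in\R^q\times\R^{p-q}$ and split $g=(\xi g,\xi^{\perp}g)$ into its first $q$ and last $p-q$ components. The hypotheses $g_0(\z_\nu)=0$ and $\det(\Delta_\nu)\neq 0$ say exactly that, along the manifold $\CZ$, the unperturbed equation $\xi^\perp g_0=0$ is solved with an invertible derivative $\Delta_\nu=\partial_v(\xi^\perp g_0)(\z_\nu)$ in the $v$-direction.

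First I would solve the "auxiliary" equation $\xi^\perp g(u,v,\e)=0$ for $v$ as a function of $u$ and $\e$. Since $\xi^\perp g_0(\nu,\sigma(\nu))=0$ and the partial derivative $\partial_v(\xi^\perp g_0)(\z_\nu)=\Delta_\nu$ is invertible for every $\nu\in\ov V$, the implicit function theorem (applied near each point of the compact set $\CZ$, then patched) yields a $\C^3$ function $v=v(u,\e)$, defined for $u$ in a neighbourhood of the $u$-projection of $\ov V$ and $|\e|$ small, with $v(u,0)=\sigma(u)$ on $\ov V$ and $\xi^\perp g(u,v(u,\e),\e)\equiv 0$. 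Expanding $v(u,\e)=\sigma(u)+\e\,v_1(u)+\e^2 v_2(u)+\CO(\e^3)$ and substituting into $\xi^\perp g=0$, I collect powers of $\e$: the $\e^1$-term gives $\Delta_\nu v_1(\nu)=-\xi^\perp g_1(\z_\nu)$, hence $v_1(\nu)=-\Delta_\nu^{-1}\xi^\perp g_1(\z_\nu)=\gamma(\nu)$ as in \eqref{gamma}, and (the point I expect to be the bulk of the computation) the $\e^2$-term determines $v_2$ in terms of $\gamma$, the second $v$-derivatives of $\xi^\perp g_0$, and $\xi^\perp g_1,\xi^\perp g_2$.

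Next I substitute $v=v(u,\e)$ into the remaining $q$ equations and define the bifurcation function $h(u,\e)=\xi g(u,v(u,\e),\e)$. Because $\xi g_0(\z_\nu)=0$ on $\CZ$ as well (it is part of $g_0(\z_\nu)=0$), one has $h(u,0)\equiv 0$ on $\ov V$, so $h(u,\e)=\e\, h_1(u)+\e^2 h_2(u)+\CO(\e^3)$. A Taylor expansion of $\xi g$ in $v$ around $\sigma(u)$, together with the expansion of $v(u,\e)$, gives $h_1(\nu)=\xi g_1(\z_\nu)=f_1(\nu)$ and, again by a direct but routine expansion, $h_2(\nu)=\tfrac{\partial\xi g_1}{\partial v}(\z_\nu)\gamma(\nu)+\tfrac12\tfrac{\partial^2\xi g_0}{\partial v^2}(\z_\nu)\gamma(\nu)^2+\xi g_2(\z_\nu)=\tfrac12 f_2(\nu)$, matching \eqref{fi} up to the harmless constant factor $2$. (Here I use that $\CZ$ is a graph, $\z_\nu=(\nu,\sigma(\nu))$, so "first $q$ coordinates of $\z$" and $\nu$ agree.) Finally, to prove $(a)$: if $f_1(\nu^*)=0$ and $\det(Df_1(\nu^*))\neq 0$, then $\tilde h(u,\e):=h(u,\e)/\e$ extends $\C^2$-ly to $\e=0$ with $\tilde h(\nu^*,0)=f_1(\nu^*)=0$ and $D_u\tilde h(\nu^*,0)=Df_1(\nu^*)$ invertible; the implicit function theorem gives $u=u_\e\to\nu^*$ with $\tilde h(u_\e,\e)=0$, hence $h(u_\e,\e)=0$, hence $g(\z_{\nu_\e},\e)=0$ with $\z_{\nu_\e}=(u_\e,v(u_\e,\e))\to\z_{\nu^*}$. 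For $(b)$, the assumption $f_1\equiv 0$ forces $h_1\equiv 0$, so $h(u,\e)/\e^2$ extends continuously to $\e=0$ with value $h_2=\tfrac12 f_2$, and the same implicit function theorem argument applied to $h(u,\e)/\e^2$ at $\nu^*$ (using $\det(Df_2(\nu^*))\neq 0$) produces the desired zero. The main obstacle is purely bookkeeping: carrying the two-step substitution (solve for $v$, then plug into $\xi g$) through order $\e^2$ and verifying that the resulting coefficient is exactly $f_2/2$; once the expansions of $v(u,\e)$ and of $\xi g$ are organized by powers of $\e$, everything reduces to the two implicit function theorem applications above.
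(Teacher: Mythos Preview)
The paper does not actually prove Lemma~\ref{LS}: it simply states that ``the proof of this lemma can be found in \cite{LN}.'' Your Lyapunov--Schmidt argument (solve $\xi^\perp g=0$ for $v=v(u,\e)$ via the implicit function theorem, substitute into $\xi g$, expand in $\e$, and apply the implicit function theorem once more to $h(u,\e)/\e$ or $h(u,\e)/\e^2$) is precisely the standard approach and is almost certainly what \cite{LN} contains, so in spirit you are doing the same thing.

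There is, however, one genuine gap in your bookkeeping. When you expand $h(u,\e)=\xi g(u,v(u,\e),\e)$ to first order in $\e$ you get
\[
h_1(\nu)=\frac{\partial(\xi g_0)}{\partial v}(\z_\nu)\,v_1(\nu)+\xi g_1(\z_\nu)=\Gamma_\nu\,\gamma(\nu)+\xi g_1(\z_\nu),
\]
not simply $\xi g_1(\z_\nu)$. Your identification $h_1=f_1$ (and likewise the clean form of $h_2=f_2/2$) therefore requires the additional hypothesis $\Gamma_\nu=0$, i.e.\ that the upper right $q\times(p-q)$ block of $Dg_0(\z_\nu)$ vanishes. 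The lemma as copied here \emph{defines} $\Gamma_\nu$ but never explicitly assumes it is zero; nevertheless, this hypothesis is exactly what is verified and used in the paper's application (Theorem~\ref{BRt1}), and it is presumably present in the original statement in \cite{LN}. You should either add $\Gamma_\nu=0$ to the hypotheses you are working under, or carry the extra $\Gamma_\nu\gamma(\nu)$ term through your expansion; without one of these, the step ``$h_1(\nu)=\xi g_1(\z_\nu)$'' is not justified.
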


The proof of this lemma can be found in \cite{LN}.

Note that in Lemma \ref{LS} the functions $g_i$ for $i=0,1,2$ which
appears in the expression of \eqref{fi} and \eqref{gamma} are the
ones of the function
\begin{equation}\label{x1}
g(z,\e)=g_0(z)+\e g_1(z)+ \e^2 g_2(z)+\CO(\e^3),
\end{equation}
instead of the functions which appear in \eqref{gi}.

\begin{proof}[Proof of Theorem \ref{BRt1}]
Let $\psi(\T,\z,\e)$ be a periodic solution of system \eqref{BRs1}
such that $\psi(0,\z,\e)=\z$. Similarly let $\psi^{\pm}(\T,\z,\e)$
be the solutions of the systems $\x'=F^{\pm}(\T,\x,\e)$ such that
$\psi^{\pm}(0,\z,\e)=\z$. So
\[
\psi(\T,\z,\e)=\left\{\begin{array}{l}
\psi^+(\T,\z,\e) \quad \textrm{if}\quad 0\leq\T\leq\phi,\vspace*{-0.15cm}\\
\psi^-(\T,\z,\e) \quad \textrm{if}\quad \phi\leq\T\leq T.
\end{array}\right.
\]
Since the vector field \eqref{BRs1} is $T$--periodic, it may also
read
\[
\psi(\T,\z,\e)=\left\{\begin{array}{l}
\psi^+(\T,\z,\e) \quad \textrm{if}\quad 0\leq\T\leq\phi, \vspace*{-0.15cm}\\
\psi^-(\T,\z,\e) \quad \textrm{if}\quad \phi-T\leq\T\leq 0.
\end{array}\right.
\]

Now, we consider the function
$g(\z,\e)=\psi^+(\phi,\z,\e)-\psi^-(\phi-T,\z,\e)$. It is easy to
see that the solution $\psi(\T,\z,\e)$ is $T$--periodic in $\T$ if
and only if $g(\z,\e)=0$. So, from hypothesis $(H)$ we have that
$g(\z_{\nu,\e})=0$ for every $\z_{\nu,\e}\in\CZ$.

Using Taylor series to expand the functions $\psi^{\pm}(\T,\z,\e)$
in powers of $\e$ we obtain
\begin{equation}\label{psi}
\psi^{\pm}(\T,\z,\e)=y_0^{\pm}(\T,\z)+\e
y_1^{\pm}(\T,\z)+\e^2\dfrac{y_2^{\pm}(\T,\z)}{2}+\CO(\e^2),
\end{equation}
where $y_i(\T,\z)$ is given in \eqref{yi}. We shall omit the
computations for obtaining \eqref{psi}, nevertheless they can be
found in \cite{LNT}. Therefore, $g(\z,\e)=g_0(\z)+\e g_1(\z)+\e^2
g_2(\z)+\CO(\e^2)$, where $g_i(\z)=y^+_i(\phi,\z)-y^-_i(\phi-T,\z)$
for $i=0,1,2$. Moreover
\[
D g_0(\z)=\dfrac{\p \f^+}{\p \z}(\phi,\z)-\dfrac{\p \f^-}{\p
\z}(\phi-T,\z)=Y^+(\phi,\z)-Y^-(\phi-T,\z).
\]
So, from hypothesis of  Theorem \ref{BRt1} we have that the matrix $Dg_0(\z)$ has in
the upper right corner the zero $q\times(d-q)$ matrix, and in the
lower right corner has the $(p-q)\times(p-q)$ matrix $\Delta_{\nu}$
with $\det(\Delta_{\nu})\neq0$.


We conclude the proof of this theorem by applying Lemma \ref{LS} to the function $g(\z,\e)$ defined in
\eqref{x1}.
\end{proof}

\section{Proof of Theorem \ref{th3}}\label{phineqpi}

In order to prove Theorem \ref{th3}  we shall study
the zeros of the averaging functions $f_1$ and $f_2$, given in
\eqref{f1} and \eqref{f2}, respectively, when $\phi \in
(0,2\pi)\setminus\{\pi\}$.

\begin{remark}
For sake of simplicity we shall denote  by $\lambda_{ijk_1 \ldots
k_m0}$ the coefficient of $x^i y^jz_1^{k_1} \ldots z_m^{k_m} $, and
by $\lambda_{ij0}$ the coefficient of $x^i y^j$ of system
\eqref{As1}, when $\lambda=a^\pm,b^\pm, c_\ell^\pm$ for all $1 \leq
\ell \leq m$.
\end{remark}

From statement $(a)$ of Lemma \ref{lemtrig} we have $f_1(\nu)=
(f_{10}(\nu),\ldots, f_{1m}(\nu))$ where
\begin{equation}\label{f1l}
\begin{array}{ll}
f_{10}(\nu) =&  \dis\sum_{i+j+k_1+\ldots+k_m=0}^{n} r^{i+j}z_1^{k_1}
\ldots z_m^{k_m} \bigg( a^+_{ijk_1\ldots k_m0 } I_{(i+1,j,\phi)} \\
& +  \,b^+_{ijk_1\ldots k_m 0} \dis I_{(i,j+1,\phi)}  + \,
a^-_{ijk_1\ldots k_m0 } J_{(i+1,j,\phi)} +
b^-_{ijk_1\ldots k_m 0}\dis  J_{(i,j+1,\phi)} \bigg),\\
f_{1\ell}(\nu) = & \dis \sum_{i+j+k_1+\ldots+k_m=0}^{n}
r^{i+j}z_1^{k_1} \ldots z_m^{k_m}  \bigg( c^+_{\ell,ijk_1\ldots k_m
0 } I_{(i,j,\phi)} + c^-_{\ell,ijk_1\ldots k_m 0} J_{(i,j,\phi)}
\bigg),
\end{array}
\end{equation}
with  $\nu=(r,z_1, \ldots, z_m)$ and $1 \leq \ell \leq m $.

\begin{proposition}
\label{prop10} Assume $0 \leq m \leq d$ and $\phi \neq \pi$. Then
$f_1$ has at most $n^{m+1}$ simple zeros and this number can be
reached.
\end{proposition}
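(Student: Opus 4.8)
The plan is to analyze the system of polynomial equations $f_1(\nu)=0$ directly, exploiting the explicit form of the components in \eqref{f1l} together with the non-vanishing of the trigonometric integrals guaranteed by statement $(a)$ of Lemma \ref{lemtrig}. First I would observe that each component $f_{1\ell}(\nu)$ with $1\le\ell\le m$ is a polynomial in $\nu=(r,z_1,\ldots,z_m)$ in which every monomial $r^{i+j}z_1^{k_1}\cdots z_m^{k_m}$ has total degree $i+j+k_1+\cdots+k_m\le n$, and crucially, the coefficient attached to each such monomial is of the form $c^+_{\ell,ijk_1\ldots k_m0}I_{(i,j,\phi)}+c^-_{\ell,ijk_1\ldots k_m0}J_{(i,j,\phi)}$, which by Lemma \ref{lemtrig}$(a)$ is a free real parameter that can be assigned any value (choosing, say, $c^-=0$ and $c^+$ arbitrary, since $I_{(i,j,\phi)}\ne0$). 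The same holds for $f_{10}$. Hence the map sending the perturbation coefficients to the coefficients of the polynomials $f_{10},\ldots,f_{1m}$ is onto: $f_1$ can be taken to be \emph{any} $(m+1)$-tuple of polynomials of degree $\le n$ in $m+1$ variables.

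Next I would settle the upper bound. Since $f_1$ is a polynomial map $\R^{m+1}\to\R^{m+1}$ whose components each have degree at most $n$, B\'ezout's theorem (in its affine form, or applied in projective space) gives that the number of isolated zeros is at most $n^{m+1}$, and a simple zero in the sense of $\det(f_1'(\nu^*))\ne0$ is in particular isolated. This bounds $N_1(m,n,\phi)$ from above by $n^{m+1}$ via statement $(a)$ of Theorem \ref{BRt1}, once one recalls that only simple zeros of $f_1$ produce limit cycles through that theorem, so it suffices to count those.

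For the lower bound I would construct an explicit choice realizing $n^{m+1}$ simple zeros. The cleanest way is to take the polynomials in triangular (decoupled) form: choose $f_{10}(\nu)$ to depend only on $r$, say $f_{10}=\prod_{s=1}^{n}(r-\rho_s)$ with distinct positive $\rho_s$, then $f_{11}$ to depend only on $z_1$, e.g. $f_{11}=\prod_{s=1}^{n}(z_1-\tau_{1,s})$, and in general $f_{1\ell}=\prod_{s=1}^n(z_\ell-\tau_{\ell,s})$ for $1\le\ell\le m$, each a degree-$n$ polynomial in a single coordinate with $n$ distinct roots. This is admissible by the surjectivity established in the first step. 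The zero set of $f_1$ is then the product grid of size $n^{m+1}$, every point is isolated, and the Jacobian $f_1'$ is diagonal there with nonzero diagonal entries $\prod_{t\ne s}(\rho_s-\rho_t)$, etc., so $\det(f_1'(\nu^*))\ne0$ at each. Choosing all these roots inside a suitable open bounded $V\subset\R^{m+1}$ (e.g. taking $\rho_s$ positive so that $r>0$) we land in the region where Theorem \ref{BRt1}$(a)$ applies.

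The main obstacle I anticipate is not conceptual but bookkeeping: one must verify carefully that prescribing the coefficients of $f_{10},\ldots,f_{1m}$ really does correspond to an \emph{independent} and \emph{unconstrained} choice of the original polynomial coefficients $a^\pm,b^\pm,c^\pm_\ell$ of system \eqref{As1}. Each coefficient of $f_{1\ell}$ involves distinct $c^\pm_{\ell,ijk_1\ldots k_m0}$ for distinct multi-indices, so there is no overlap; the only thing to check is that no forced cancellation occurs, which is exactly what Lemma \ref{lemtrig}$(a)$ rules out (all the relevant $I_{(\cdot,\cdot,\phi)}$ are nonzero for $\phi\ne\pi,2\pi$). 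A minor secondary point is ensuring the constructed zeros lie in $V\subset D\subset\R_+\times\R^d$, i.e.\ $r>0$, which is handled by the freedom in choosing the roots $\rho_s$.
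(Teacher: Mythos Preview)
Your proposal is correct and follows essentially the same route as the paper: B\'ezout's theorem for the upper bound, and the freedom to prescribe the polynomial coefficients of each $f_{1\ell}$ (granted by Lemma~\ref{lemtrig}(a)) for the lower bound. The paper's own proof is in fact considerably terser---it simply asserts that each $f_{1\ell}$ is a ``complete polynomial of degree $n$'' with independent coefficients and invokes B\'ezout, without writing down an explicit realizing example; your decoupled construction $f_{1\ell}=\prod_s(z_\ell-\tau_{\ell,s})$ makes that last step concrete and is a welcome clarification.

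One small point of bookkeeping: the coefficient of the monomial $r^{s}z_1^{k_1}\cdots z_m^{k_m}$ in $f_{1\ell}$ is not a single term but the sum $\sum_{i+j=s}\big(c^+_{\ell,ijk_1\ldots k_m0}I_{(i,j,\phi)}+c^-_{\ell,ijk_1\ldots k_m0}J_{(i,j,\phi)}\big)$, since several $(i,j)$ contribute to the same power of $r$. This does not affect your argument---the sum is still a free parameter because one can set all but one of the $c^\pm$ to zero---but it is worth phrasing accurately.
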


\begin{proof}
For each $0 \leq \ell \leq m$ and $\nu=(r,z_1,\ldots,z_m)$,
$f_{1\ell}(\nu)$ is a complete polynomial of degree $n$.  Recall
that a {\it complete polynomial of degree $k$} means a polynomial
that appears all its monomials. By Bezout Theorem (see \cite{Ful}),
$f_1(\nu)$ can be at most $n^{m+1}$ simple zeros. Since all the
coefficients of $f_1(\nu)$ are independent, we can choose them in
order that $f_1(\nu)$ has exactly $n^{m+1}$ zeros with $r>0$, and
$\det f'_1(\nu^*)\neq 0$ for each zero $\nu^*$ of $f_1(\nu)$ (that
is, $\nu^*$ is a simple zero).
\end{proof}

\begin{proposition}\label{prop12}
Take $ 0 \leq m \leq d$ and $\phi \neq \pi$. If  $f_1 \equiv 0$ then
$f_2$ has at most $(2n)^{m+1}$ simple  zeros, and a lower bound for
the maximum number of simple zeros is $(2n)(2n-1)^{m}$.
\end{proposition}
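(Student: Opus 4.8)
The plan is to compute the averaged function $f_2=(f_{20},\ldots,f_{2m})$ explicitly from formulas \eqref{f2l1}, \eqref{FF}, \eqref{gammaw}, \eqref{G1l}, \eqref{Ftil} and \eqref{yii}, and then identify it with a polynomial map in $\nu=(r,z_1,\ldots,z_m)$ whose components have controlled degree. The first step is to use the hypothesis $f_1\equiv 0$ together with statement $(a)$ of Lemma \ref{lemtrig}: since $\phi\neq\pi,2\pi$, every trigonometric integral $I_{(p,q,\phi)}$, $J_{(p,q,\phi)}$ and the iterated ones are nonzero, so $f_1\equiv 0$ forces a large family of linear relations among the coefficients $a^\pm,b^\pm,c^\pm_\ell$. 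I would record these relations and substitute them back into the expression for $f_2$, exactly as is done in the Appendix. The upshot I expect is that each component $f_{2\ell}(\nu)$ is, after the substitution, a polynomial in $(r,z_1,\ldots,z_m)$ of degree at most $2n$: the quadratic-in-$\e$ terms $B^\pm_{\ell+2}$ contribute degree $n$, while the products $A^\pm_1 A^\pm_{\ell+2}$, the terms $\widetilde F^\pm_{1\ell}$ coming from $\partial F_1/\partial\mathbf{x}\cdot y_1$, and $\widetilde G_{1\ell}$ coming from $\partial\xi g_1/\partial v\cdot\gamma$ all have degree at most $2n$ (each factor of degree $\le n$, and the integration in $\theta$ does not raise the polynomial degree in $(r,\mathbf z)$).

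For the upper bound $(2n)^{m+1}$: once $f_2$ is exhibited as a polynomial map $\ov V\subset\R^{m+1}\to\R^{m+1}$ with $\deg f_{2\ell}\le 2n$ for each $\ell$, Bézout's theorem (as cited, \cite{Ful}) gives at most $(2n)^{m+1}$ isolated zeros, hence at most that many simple zeros. For the lower bound $(2n)(2n-1)^m$: here I would argue that, despite the constraints imposed by $f_1\equiv 0$, one still has enough free coefficients to realize a prescribed zero set. The idea is to choose the perturbation so that $f_{20}$ is a \emph{complete} polynomial of degree $2n$ in $(r,z_1,\ldots,z_m)$ and each $f_{2\ell}$ for $1\le\ell\le m$ is a complete polynomial of degree $2n-1$ in the same variables, with all coefficients independent; then Bézout's bound for this specific system is $2n\cdot(2n-1)^m$, and genericity of the coefficients lets us attain it with all zeros simple and lying in $r>0$. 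To see that this degree pattern is achievable I would trace through \eqref{f2l1}: the terms $\widetilde F^\pm_{1\ell}$ and $\widetilde G_{1\ell}$ are the ones that could push $f_{2\ell}$, $\ell\ge 1$, up to degree $2n$; I would show these either vanish or can be absorbed once $f_1\equiv 0$, leaving $f_{2\ell}$ of degree $2n-1$ coming from $B^\pm_{\ell+2}$, while $f_{20}$ genuinely reaches degree $2n$ through the product term $A^\pm_1 A^\pm_{\ell+2}$ and $B^\pm_2$.

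The main obstacle I anticipate is twofold. First, the bookkeeping: writing $f_2$ in closed form requires carefully expanding \eqref{Ftil} and \eqref{gammaw}, keeping track of which monomials in $(r,\mathbf z)$ survive after imposing $f_1\equiv 0$, and verifying that no component secretly has degree exceeding the claimed bound — this is where an explicit Appendix computation is indispensable. Second, and more delicate, is the \emph{independence} claim needed for the lower bound: one must check that the map from the admissible coefficient space (the subvariety cut out by $f_1\equiv 0$) to the coefficient vectors of $(f_{20},\ldots,f_{2m})$ is surjective onto the space of tuples with the prescribed degree pattern, so that a generic choice realizes $2n\cdot(2n-1)^m$ simple zeros. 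I would handle this by exhibiting, for each target monomial coefficient of each $f_{2\ell}$, an explicit free parameter among the $a^\pm,b^\pm,c^\pm_\ell,\alpha^\pm,\beta^\pm,\gamma^\pm_\ell$ that controls it and no lower-degree coefficient of the other components in an obstructing way — in practice one uses the second-order coefficients $\alpha^\pm,\beta^\pm,\gamma^\pm_\ell$, which enter only through $B^\pm$ and are untouched by the first-order constraint $f_1\equiv 0$, to freely prescribe $f_2$.
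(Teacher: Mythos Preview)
Your upper-bound argument is essentially right but misses one detail: $f_{2\ell}$ is \emph{not} a polynomial in $(r,z_1,\dots,z_m)$, because $A_1^\pm$ carries a factor $1/r$, so the product $A_1^\pm A_{\ell+2}^\pm$ inside $F_{2\ell}^\pm$ gives $f_{2\ell}$ a pole at $r=0$. The paper fixes this by observing that $r\,f_{2\ell}(\nu)$ \emph{is} a polynomial of degree $2n$, and since $r>0$ the zero sets agree; B\'ezout then gives $(2n)^{m+1}$.

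Your lower-bound strategy, however, has a genuine gap. First, your degree bookkeeping is off: $\widetilde G_{1\ell}$ and $\int\widetilde F_{1\ell}^\pm$ have degree $2n-1$, while the degree-$2n$ contribution to $r f_{2\ell}$ (for every $\ell$, not just $\ell=0$) comes from the product $A_1^\pm A_{\ell+2}^\pm$ inside $F_{2\ell}^\pm$; and $B^\pm_{\ell+2}$ contributes only degree $n$, not $2n-1$. So your plan to force $f_{2\ell}$ ($\ell\ge1$) down to degree $2n-1$ by controlling $\widetilde F_{1\ell},\widetilde G_{1\ell}$ targets the wrong terms. Second, and more seriously, your surjectivity route---using the second-order coefficients $\alpha^\pm,\beta^\pm,\gamma_\ell^\pm$ to ``freely prescribe $f_2$''---cannot work: those coefficients enter only through $B^\pm$, hence only affect the degree-$\le n$ part of each $f_{2\ell}$, leaving all monomials of degree between $n+1$ and $2n$ untouched. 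The paper sidesteps this entirely by building a \emph{decoupled} example: take only $a^\pm_{i00}\neq0$ and $c^\pm_{\ell,0\ldots0 k_\ell 0}\neq0$, all other first-order coefficients zero. Then $f_{20}$ depends on $r$ alone (with $r f_{20}(r)$ a complete degree-$2n$ polynomial in $r$) and each $f_{2\ell}$ depends only on $(r,z_\ell)$; fixing a root $r^*$ of $f_{20}$, the function $f_{2\ell}(r^*,z_\ell)$ is a degree-$(2n-1)$ polynomial in $z_\ell$ with independent coefficients. The count $2n\cdot(2n-1)^m$ then follows by simple factorization rather than by a global surjectivity argument.
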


\begin{proof}
Assume that $f_1 \equiv 0$. From \eqref{f1l} it follows that
\begin{equation}
\label{ac}
\begin{array}{ll}
\dis \sum_{i+j=s} & a^+_{ijk_1\ldots k_m0} \,I_{(i+1,j,\phi)} +
b^+_{ijk_1\ldots k_m 0} \,I_{(i,j+1,\phi)}  \\ & + \,a^-_{ijk_1
\ldots k_m0} \,J_{(i+1,j,\phi)}   +  b^-_{ijk_1\ldots k_m0} \,
J_{(i,j+1,\phi)}   =0,  \\
\dis\sum_{i+j=s} & c^+_{\ell,ijk_1\ldots k_m0} \,I_{(i,j,\phi)}+
\,c^-_{\ell,ijk_1\ldots k_m0} \,J_{(i,j,\phi)} =0,
\end{array}
\end{equation}
for   $1\leq \ell \leq m$,  $0 \leq s \leq n$, $0 \leq k_\ell \leq
n$ with $0 \leq k_1 + \ldots + k_m \leq n-s$.

Moreover, $f_2(\nu)=(f_{20}(\nu), \ldots, f_{2m}(\nu))$ with
$\nu=(r,z_1,\ldots,z_m)$. In particular, if $m=0$ then $f_2(\nu) =
f_{20}(r)$.  Considering the expression for $f_{2\ell}(\nu)$, given
in \eqref{f2l1}  for  $0 \leq \ell \leq m$, we conclude that
$\widetilde{G}_{1\ell}(\nu)$ and  $\dint_0^\phi
\widetilde{F}_{1\ell}^+(s,\z_\nu)\, ds +\dint_\phi^{2\pi}
\widetilde{F}_{1\ell}^-(s,\z_\nu)\, ds$ are complete polynomials of
degree $2n-1$ in the variables $(r, z_1,\ldots,z_m)$, and
$$
\int_0^\phi F^+_{2\ell}(s,\z_\nu)\,ds + \int_\phi^{2\pi}
F^-_{2\ell}(s,\z_\nu)\,ds=\frac{1}{r} \sum_{k=0}^{2n}Q_k (z_1,
\ldots, z_m)\,r^{k},
$$
where $\z_{\nu}=(r,z_1,\ldots,z_m,0,\ldots,0)\in\R^{d+1}$,  $Q_k
(z_1, \ldots, z_m)$ is a complete polynomial  of degree $2n-k$ in
the variables $(z_1, \ldots,z_m)$ if $m \neq 0$, and $Q_k (z_1,
\ldots, z_m)$ is constant if $m=0$. The above equality is evident if
we take into account statement $(a)$ of Lemma \ref{lemtrig} and
conditions \eqref{ac}. Therefore, each $r f_{2\ell}(\nu)$ is a
complete polynomial of degree $2n$ in the variables
$(r,z_1,\ldots,z_m)$.  Since $r>0$, it is known that $r
f_{2\ell}(\nu)=0$ if and only if $f_{2\ell}(\nu)=0$ for each $0 \leq
\ell \leq m$. Then, by Bezout Theorem,  $f_{2}(\nu)$ has at most
$(2n)^{m+1}$ simple zeros for all $0 \leq m \leq d$.

In order to show that the maximum number is greater than or equal to
$(2n)(2n-1)^m$ we provide a particular example. So, take
$a^\pm_{i00}\neq 0$, $c^\pm_{\ell,00\ldots 0 k_\ell 0} \neq 0$, and
we take zero all the other coefficients for  $ 1\leq \ell \leq m$.
From \eqref{f2l1} we obtain $f_{20}(\nu)=f_{20}(r)$ and
$f_{2\ell}(\nu)=f_{2\ell}(r,z_\ell)$, where
$$
\begin{array}{ll}
f_{20}(r) =&  \dis\frac{4}{r} \sum_{i=0}^{n} \sum_{p=0}^{n} r^{i+p}
\bigg( a^+_{i00}a^+_{p00} I_{(i+p+1,1,\phi)} + a^-_{i00}a^-_{p00} J_{(i+p+1,1,\phi)}  \\
&+\, i\,a^+_{i00} a^+_{p00} \dis \int_{0}^{\phi} \cos^{i+1}s
\,I_{(p+1,0,s)} ds  + \, i\,a^-_{i00} a^-_{p00} \dis
\int_{\phi}^{2\pi} \cos^{i+1} s \,I_{(p+1,0,s)}ds \bigg),
\end{array}
$$

$$
\begin{array}{ll}
f_{2\ell}(r,z_\ell)=& \dis\frac{4}{r} \sum_{i=0}^{n}\sum_{k_l=0}^{n}
r^i z_\ell^{k_\ell} \big( a^+_{i00} c^+_{\ell,0\ldots 0k_\ell 0}
I_{(i,1,\phi)} + a^-_{i00} c^-_{\ell,0\ldots0 k_\ell 0}  J_{(i,1,\phi)}\big)\\
&+ 4{\dis \sum_{ k_\ell=1}^{n} \,\sum_{ L_\ell=0}^{n}} z_\ell^{k_\ell+
L_\ell -1} \bigg(  \frac{\phi^2}{2} k_\ell\, c^+_{\ell,0\ldots 0k_\ell 0}
\,c^+_{\ell,0\ldots 0L_\ell 0}  \\
&+   \frac{(2\pi)^2-\phi^2}{2} k_\ell \,c^-_{\ell,0\ldots0 k_\ell0}
\, c^-_{\ell,0\ldots 0 L_\ell 0}  \bigg),
\end{array}
$$
where $a^+_{i00} I_{(i+1,0,\phi)}= -a^-_{i00}J_{(i+1,0,\phi)} $ and
$c^+_{\ell,00\ldots 0 k_\ell 0} I_{(0,0,\phi)} =-c^-_{\ell,00\ldots
0 k_\ell 0} J_{(0,0,\phi)}$ for $1 \leq \ell \leq m$ (see
\eqref{ac}).

{From} statement $(a)$ of Lemma \ref{lemtrig}, $r f_{20} (r)$ is a
complete polynomial of degree $2n$ in the variable $r$,  whose
coefficients are independent.  Furthermore, if  $f_{20}(r^*)=0$ with
$r^*>0$, then $f_{2\ell}(r^*,z_\ell)$ is a polynomial of degree
$2n-1$ in the variable  $z_\ell$,  and all their coefficients are
independent for $1 \leq \ell \leq m$. Therefore, By Bezout Theorem,
$f_2(\nu)$ has at most $(2n)(2n-1)^{m}$ simple zeros, and this
number can be reached due to the independence of coefficients.
\end{proof}

\begin{proof}[Proof of Theorem \ref{th3}]
We apply Theorem \ref{BRt1} to the function $f_1$ of Proposition
\ref{prop10} and we conclude statement $(a)$. Statement $(b)$ is
proved applying Theorem \ref{BRt1} to the functions $f_1$ and $f_2$
given in Proposition \ref{prop12}. 
\end{proof}

\subsection{Improving the lower bound}\label{improve}
As mentioned in the introduction, the lower bound of statement $(b)$ of Theorem \ref{th3} is not optimal and can be improved. From Theorem \ref{BRt1} we need to solve the equation $f_2(\nu)=0,$ assuming $f_1 \equiv 0$. This can be a hard task due to the complexity of $f_2$. In what follows, we provide a simpler polynomial system for which their simple zeros imply the existence of simple zeros of  $f_2$. 

 From \eqref{f2} we have $f_2(\nu)
=(f_{20}(\nu), \ldots, f_{2m}(\nu))$. In \eqref{f2l1} we can take
$\widetilde{G}_{1\ell}(\nu)=0$ and, since $1/r$ appears as a common factor in the expression of $A^\pm_{1}$ \eqref{AB}, we define $\dis  \widetilde{A}^\pm_{1}= r {A^\pm_1}.$ Finally,  for $1 \leq \ell \leq m,$ we assume that $ A^\pm_{\ell+2}=\delta \widetilde{A}^\pm_{\ell+2} $   and $ B^\pm_{\ell+2}=\delta \widetilde{B}^\pm_{\ell+2} $  for
$\delta>0$ sufficiently small. Notice that, the assumption is equivalent to ask that the coefficients of the perturbation \eqref{pert} for $1 \leq \ell \leq m$ are of order $\delta$.

Now, for $1 \leq \ell \leq m,$ we define
\begin{equation}\label{pq}
\begin{array}{ll}
P_{\ell} (\nu) =& \dis  \int_0^\phi   \widetilde{B}^+_{\ell+2}(s,\z_\nu)
\,ds + \int_\phi^{2\pi}  \widetilde{B}^-_{\ell+2}(s,\z_\nu) \,ds,\vspace*{0.2cm}\\
Q_\ell(\nu) =& \dis \int_0^\phi \widetilde{A}_1^+(s,\z_\nu)
\widetilde{A}^+_{\ell+2}(s,\z_\nu) \,ds + \dis \int_\phi ^{2\pi}
\widetilde{A}_1^-(s,\z_\nu) \widetilde{A}^-_{\ell+2}(s,\z_\nu) \,ds.
\end{array}
\end{equation}

Thus, from \eqref{AB}, \eqref{FF}, \eqref{yii} and \eqref{Ftil}  we have
$\dis \int \widetilde{F}_{1\ell}^\pm (s,\z_\nu) ds= \CO_2(\delta)$ and,
therefore,
\begin{equation*}
\begin{array}{ll}
\dis \frac{r}{4 \delta} f_{2\ell} (\nu)  = & \dis r P_{\ell}(\nu)  -
Q_\ell(\nu)    + \CO(\delta),\quad \text{for}\hspace*{0.2cm} 1 \leq  \ell \leq m.
\end{array}
\end{equation*}
Hence, taking $\delta>0$ sufficiently small, we obtain the following proposition.

\begin{proposition}
If the polynomial system
\begin{equation}\label{ps}
f_{20}(\nu)=0\,\, \text{ and }\,\, r\, P_{\ell}(\nu)-Q_{\ell}(\nu)=0,\,\, \text{ for }\,\, 1\leq\ell\leq m,
\end{equation}
has $N$ isolated solutions, then $N_2(m,n,\phi)\geq N$.
\end{proposition}

\section{Proof of Theorem \ref{th6}}\label{phi=pi}

In this section we  study the zeros of the functions $f_1$ and
$f_2$, given in \eqref{f1} and \eqref{f2}, respectively, when
$\phi=\pi$. Then, we conclude Theorem \ref{th6}  applying Theorem
\ref{BRs1}.

{From} statement $(b)$ of Lemma \ref{lemtrig} we have
$f_1(\nu)=(f_{10}(\nu),\ldots, f_{1\ell}(\nu))$ where
\begin{equation}\label{f11l}
\begin{array}{ll}
f_{10}(\nu) =&    \dis\sum_{i \,odd,\,P=0}^{n} r^{i+j}z_1^{k_1}\ldots
z_m^{k_m}     \bigg(    a^+_{ijk_1\ldots k_m0 } I_{(i+1,j,\pi)} +
a^-_{ijk_1\ldots k_m0 } J_{(i+1,j,\pi)} \bigg)\\
&+ \dis\sum_{i\,even,\,P=0}^{n} r^{i+j}z_1^{k_1}\ldots z_m^{k_m}
\bigg(  b^+_{ijk_1\ldots k_m 0} \dis I_{(i,j+1,\pi)}
+\, b^-_{ijk_1\ldots k_m 0}\dis  J_{(i,j+1,\pi)} \bigg),\\
f_{1\ell}(\nu) = & \dis \sum_{i \,even,\,P=0}^{n} r^{i+j}z_1^{k_1}
\ldots z_m^{k_m}   \bigg( c^+_{\ell,ijk_1\ldots k_m 0 }
I_{(i,j,\pi)} + c^-_{\ell,ijk_1\ldots k_m 0} J_{(i,j,\pi)} \bigg),
\end{array}
\end{equation}
where $\nu = (r,z_1, \ldots, z_m)$, $1 \leq \ell \leq m $ and $P=
i+j +k_1 + \ldots + k_m$.

\begin{proposition}\label{prop16}
Take $0 \leq m \leq d$ and $\phi =\pi$. Then, $f_1$ has at most
$n^{m+1}$ simple zeros and this number can be reached.
\end{proposition}

\begin{proof}
This proof is analogously to the proof of Proposition \ref{prop10},
noticing that for each $0 \leq \ell \leq m$, $f_{1\ell}(\nu)$ is a
complete polynomial of degree $n$ in the variables $(r, z_1, \ldots,
z_m)$ and all their coefficients are independent.
\end{proof}

\begin{proposition}\label{prop18}
Assume $ 0 \leq  m \leq d$ and $\phi =\pi$. If  $f_1 \equiv 0$ then
$f_2$ has at most $(2n)^{m+1}$ simple zeros, and the lower bound for
the number of simple zeros is $(2n-1)^{m+1}$ if $n$ is odd, and
$(2n-2)(2n-1)^{m}$ if $n$ is even.
\end{proposition}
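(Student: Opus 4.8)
The plan is to mimic the structure of the proof of Proposition~\ref{prop12} but keeping careful track of the parity restrictions imposed by statement~$(b)$ of Lemma~\ref{lemtrig}. First I would impose $f_1\equiv 0$ and translate this into vanishing conditions on the relevant trigonometric integrals, exactly as in~\eqref{ac}, but now read off from~\eqref{f11l}: the conditions involve $I_{(i+1,j,\pi)},J_{(i+1,j,\pi)}$ for $i$ odd and $I_{(i,j+1,\pi)},J_{(i,j+1,\pi)},I_{(i,j,\pi)},J_{(i,j,\pi)}$ for $i$ even, so that the coefficients $a^\pm,b^\pm,c_\ell^\pm$ are constrained only in those parity ranges. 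Then I would substitute into the formula~\eqref{f2l1} for $f_{2\ell}(\nu)$ and argue, using statement~$(b)$ of Lemma~\ref{lemtrig} to discard the monomials whose trigonometric coefficient vanishes, that each $r\,f_{2\ell}(\nu)$ is a polynomial in $(r,z_1,\dots,z_m)$ of degree at most $2n$. Bezout's Theorem then yields the upper bound $(2n)^{m+1}$, just as before; this half is essentially identical to Proposition~\ref{prop12} and should be routine.

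For the lower bound the work is to exhibit an explicit choice of perturbation whose averaged function $f_2$ realizes the claimed count, and here the parity of $n$ enters. Following the template of the lower-bound example in Proposition~\ref{prop12}, I would switch on only the coefficients $a^\pm_{i00}$ and $c^\pm_{\ell,0\dots 0k_\ell 0}$ (and possibly $b^\pm$ analogues where needed to satisfy the $f_1\equiv0$ constraints), setting everything else to zero, so that $f_{20}(\nu)=f_{20}(r)$ depends only on $r$ and $f_{2\ell}(\nu)=f_{2\ell}(r,z_\ell)$ decouples. The key point is that when $\phi=\pi$, statement~$(b)$ of Lemma~\ref{lemtrig} forces $I_{(p,q,\pi)}=J_{(p,q,\pi)}=0$ whenever $p$ is odd; this kills the top-degree monomial of $r f_{20}(r)$ precisely when $n$ has the ``wrong'' parity, dropping its effective degree from $2n$ to $2n-1$, and similarly drops the degree of each $r^\ast f_{2\ell}(r^\ast,z_\ell)$ from $2n$ to $2n-1$. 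Counting: when $n$ is odd one keeps degree $2n-1$ in $r$ and degree $2n-1$ in each $z_\ell$, giving $(2n-1)^{m+1}$; when $n$ is even one loses an extra unit in $r$ (degree $2n-2$) while the $z_\ell$-degrees stay at $2n-1$, giving $(2n-2)(2n-1)^m$. I would then check, exactly as in Proposition~\ref{prop12} and Corollary~\ref{cor13}, that the surviving coefficients of $f_2$ are mutually independent (the $c^\pm$'s are tied to the $a^\pm$'s or to fresh $b^\pm$'s that do not appear in $f_{20}$), so they can be chosen to make all the zeros simple and positive in $r$.

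The main obstacle I anticipate is the careful bookkeeping of which monomials survive and what their exact degrees are once the $\phi=\pi$ parity constraints are combined with the $f_1\equiv0$ relations~\eqref{ac}: the parity cases $(i)$--$(iv)$ of Lemma~\ref{lemtrig}$(b)$ for the nested integrals $\int_0^\pi\cos^i s\sin^j s\, I_{(p,q,s)}\,ds$ must be matched against the indices actually occurring in $\widetilde F_{1\ell}^\pm$ and in the $Q_k$ pieces of $\int F_{2\ell}^\pm$. One must verify that in the explicit example the relevant coefficients $d_{ip}$, $D_{k_1L_1}$, $E_{k_1L_1}$ (or their $m$-variable analogues) do \emph{not} all vanish for parity reasons at the top degree, i.e. that the degree loss is exactly as claimed and not larger. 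This is a finite check that splits according to $n\bmod 2$, and once it is done the Bezout count and the independence-of-coefficients argument proceed verbatim as in the $\phi\neq\pi$ case. Finally, the resulting $f_2$ is fed into Theorem~\ref{BRt1}$(b)$ to conclude the statement of the proposition.
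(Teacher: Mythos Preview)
Your proposal is correct and matches the paper's proof essentially step by step: the same Bezout upper bound, the same explicit lower-bound example with only $a^\pm_{i00}$ and $c^\pm_{\ell,0\ldots0 k_\ell 0}$ nonzero, and the same independence-of-coefficients argument. One minor correction to your degree accounting: the degree $2n-1$ of $f_{2\ell}(r^*,z_\ell)$ in $z_\ell$ is already present for generic $\phi$ (see the proof of Proposition~\ref{prop12}) and is structural to the chosen example rather than a parity effect; the $\phi=\pi$ parity restriction of Lemma~\ref{lemtrig}$(b)$ acts only on $r f_{20}(r)$, reducing its degree to $2n-1$ when $n$ is odd and to $2n-2$ when $n$ is even, which is exactly what the paper verifies.
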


\begin{proof}
Assume that $f_1 \equiv 0$. From \eqref{f11l} it follows that
\begin{equation}
\label{ac2}
\begin{array}{ll}
\dis \sum_{i\,odd,\,i+j=s} & a^+_{ijk_1\ldots k_m0} \,I_{(i+1,j,\pi)} +
\,a^-_{ijk_1\ldots k_m0} \,J_{(i+1,j,\pi)} \\
+ \dis \sum_{i\,even,\,i+j=s}  &  b^+_{ijk_1\ldots k_m 0} \,I_{(i,j+1,\pi)}
+  b^-_{ijk_1\ldots k_m0} \,J_{(i,j+1,\pi)} =0,  \\
\dis\sum_{i\,even,\,i+j=s} & c^+_{\ell,ijk_1\ldots k_m0}
\,I_{(i,j,\pi)}+ \,c^-_{\ell,ijk_1\ldots k_m0} \,J_{(i,j,\pi)} =0,
\end{array}
\end{equation}
for   $1\leq \ell \leq m$,  $\,0 \leq s \leq n$, $\,0 \leq k_\ell
\leq n$ with $0 \leq k_1 + \ldots + k_m \leq n-s$.

Moreover, $f_2(\nu)=(f_{20}(\nu), \ldots, f_{2m}(\nu))$ with $\nu
=(r,z_1, \ldots, z_m)$. If $m=0$ then $f_2(\nu)=f_{20}(r)$.
Analogously to the proof of Proposition \ref{prop12} we conclude
that $f_2(\nu)$ has at most $(2n)^{m+1}$ simple zeros for all $0
\leq m \leq d$.

Now, we provide a particular example to exhibit the lower bound for
the maximum number of simple zeros. So, take $a^\pm_{i00}\neq 0$,
$c^\pm_{\ell,00\ldots 0 k_\ell 0} \neq 0$, and take zero all the
other coefficients for  $ 1\leq \ell \leq m$. From \eqref{f2l1} we
obtain $f_{20}(\nu)=f_{20}(r)$ and
$f_{2\ell}(\nu)=f_{2\ell}(r,z_\ell)$, where
$$
\begin{array}{ll}
f_{20}(r) =& \dis\frac{4}{r} \left(   \dis \sum_{i\,even,\,i=0}^{n}
\,\sum_{p\,odd,\,p=0}^{n} r^{i+p} \big( a^+_{i00}a^+_{p00}
I_{(i+p+1,1,\pi)}
+\, a^-_{i00}a^-_{p00} J_{(i+p+1,1,\pi)} \right. \vspace*{0.15cm}\\
& +\, i\,a^+_{i00} a^+_{p00} \dis \int_{0}^{\pi} \cos^{i+1}s \,I_{(p+1,0,s)} ds
+\, i\,a^-_{i00} a^-_{p00} \dis \int_{\pi}^{2\pi} \cos^{i+1} s \,I_{(p+1,0,s)}ds
\big) \vspace*{0.15cm}\\
&+ \dis\sum_{i \,odd,\,i=0}^{n} \,\sum_{p\,even,\,p=0}^{n} r^{i+p}
\big( a^+_{i00}a^+_{p00} I_{(i+p+1,1,\pi)} +\, a^-_{i00}a^-_{p00}
J_{(i+p+1,1,\pi)}\, \vspace*{0.15cm}\\
&+\, i\,a^+_{i00} a^+_{p00} \dis \int_{0}^{\pi} \cos^{i+1}s \,I_{(p+1,0,s)} ds
+\, i\,a^-_{i00} a^-_{p00} \dis \int_{\pi}^{2\pi} \cos^{i+1} s \,I_{(p+1,0,s)}ds
\big) \vspace*{0.15cm}\\
&+ \dis \sum_{i\,odd,\,i=0}^{n} \,\sum_{p\,odd,\,p=0}^{n} r^{i+p}
\big( a^+_{i00}a^+_{p00} I_{(i+p+1,1,\pi)} +\, a^-_{i00}a^-_{p00}
J_{(i+p+1,1,\pi)} \,\vspace*{0.15cm}\\
&  \left. + i\,a^+_{i00} a^+_{p00} \dis \int_{0}^{\pi} \cos^{i+1}s
\,I_{(p+1,0,s)} ds   + i a^-_{i00} a^-_{p00} \dis \int_{\pi}^{2\pi}
\cos^{i+1} s I_{(p+1,0,s)}ds \big) \right),
\end{array}
$$
$$
\begin{array}{ll}
f_{2\ell}(r,z_\ell)=& \dis\dfrac{4}{r} \sum_{i=0}^{n}\sum_{k_l=0}^{n}
r^i z_\ell^{k_\ell} \big( a^+_{i00} c^+_{\ell,0\ldots 0k_\ell 0}
I_{(i,1,\phi)} + a^-_{i00} c^-_{\ell,0\ldots0 k_\ell 0}  J_{(i,1,\phi)}
\big) \vspace*{0.15cm}\\
&+{\dis \sum_{ k_\ell=1}^{n} \sum_{ L_\ell=0}^{n}}
z_\ell^{k_\ell+L_\ell -1} k_\ell\bigg(  \frac{\phi^2}{2}
c^+_{\ell,0\ldots 0k_\ell 0} c^+_{\ell,0\ldots 0L_\ell 0} +
\frac{(2\pi)^2-\phi^2}{2} c^-_{\ell,0\ldots0 k_\ell0 }
c^-_{\ell,0\ldots 0 L_\ell 0}  \bigg),
\end{array}
$$
for $1 \leq \ell \leq m$, where $a^+_{i00} I_{(i+1,0,\pi)}=
-a^-_{i00}J_{(i+1,0,\pi)}$ if $i$ is odd  and $c^+_{\ell,00\ldots 0
k_\ell 0} I_{(0,0,\pi)} =-c^-_{\ell,00\ldots 0 k_\ell 0}
J_{(0,0,\pi)}$ (see \eqref{ac2}). Therefore, from statement $(b)$ of
Lemma \ref{lemtrig},  $r f_{20}(r)$ is a complete polynomial in the
variable $r$ of degree $2n-1$ if $n$ is odd, and $2n-2$ if $n$ is
even, and its coefficients are independent.  Furthermore, if
$f_{20}(r^*)=0$ with $r^*>0$,  then $f_{2\ell}(r^*,z_\ell)$ is a
polynomial of degree $2n-1$ in  the variable $z_\ell$ for each $1
\leq \ell \leq m$. Then, the number of simple zeros with $r>0$ of
$f_2(\nu)$ can be $(2n-1)^{m+1}$ if $n$ is odd, and
$(2n-2)(2n-1)^{m}$ if $n$ is even. By the independence of all
coefficients these numbers can be reached.
\end{proof}

\begin{proof}[Proof of Theorem \ref{th6}]
From Theorem \ref{BRt1} and Proposition \ref{prop16},  statement
$(a)$  holds, and applying Theorem \ref{BRt1} to the functions $f_1$
and $f_2$ given in Proposition \ref{prop18} we conclude statement
$(b)$.
\end{proof}

\section{Proof of Theorem \ref{th7}}\label{phi=2pi}

When $\phi=2\pi$ system \eqref{As1} is continuous. Then, considering
the cylindrical coordinates given in \eqref{cyl}, and taking $\T$ as
the new time, system \eqref{As1} can be written as system
\eqref{BRs1} that is,
$$
{\bf x}' = F^+(\T,{\bf x},\e), \quad \text{for}\, 0 \leq \T \leq
2\pi,
$$
where
$$
F^+(\T,{\bf x},\e) = F_0^+(\T,{\bf x}) + \e F_1^+(\T,{\bf x}) + \e^2
F_2^+(\T,{\bf x}) + \e^3 R^+ (\T,{\bf x},\e),
$$
for ${\bf x}=(r,{\rm z})$ and ${\rm z}=(z_1, \ldots, z_d)$, with
$F_j^+$  given in \eqref{F} and \eqref{FF} for $j=0,1,2$.

{From} statement $(c)$ of Lemma \ref{lemtrig}   we have
$f_1(\nu)=(f_{10}(\nu),  \ldots, f_{1m}(\nu)$ with
\begin{equation}
\label{f1l.c}
\begin{array}{lll}
f_{10}(\nu) &=&  \dis\sum_{ i \,odd,\,j\,even,\,P=0}^{n} r^{i+j}z_1^{k_1}
\ldots z_m^{k_m} \, a^+_{ijk_1\ldots k_m 0} I_{(i+1,j,2\pi)} \\
&& +\dis\sum_{ i \,even,\,j\,odd,\,P=0}^{n} r^{i+j}z_1^{k_1}\ldots z_m^{k_m}
\, b^+_{ijk_1\ldots k_m0 } \dis I_{(i,j+1,2\pi)},\\
f_{1\ell}(\nu) &= & \dis \sum_{i,\,j \,even,\,P=0}^{n}
r^{i+j}z_1^{k_1} \ldots z_m^{k_m} \, c^+_{\ell,ijk_1\ldots k_m 0}
I_{(i,j,2\pi )} ,
\end{array}
\end{equation}
where $\nu=(r,z_1, \ldots, z_m)$,  $1\leq \ell \leq m$  and
$P=i+j+k_1+\ldots+k_m$.

\begin{proposition}\label{prop20}
Assume $0 \leq m \leq d$ and $\phi =2 \pi$. If $m \neq 0$ then $f_1$
has at most $n^{m}(n-1)/2$ simple zeros and this number can be reached.
If $m =0$ then $f_1$ has at most $(n-1)/2$ simple zeros if $n$ is
odd, and $(n-2)/2$ if $n$ is even, and these numbers can be reached.
\end{proposition}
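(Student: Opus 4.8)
The plan is to read off the explicit form of $f_1$ from \eqref{f1l.c} using Lemma \ref{lemtrig}$(c)$, to separate the radial variable $r$ from the block $\mathrm{z}=(z_1,\dots,z_m)$ via the substitution $\rho=r^{2}$, and then to combine Bezout's Theorem with an explicit realization. By Lemma \ref{lemtrig}$(c)$ the integral $I_{(p,q,2\pi)}$ is non-zero precisely when $p$ and $q$ are both even; hence in \eqref{f1l.c} every surviving monomial of $f_{10}$ carries an \emph{odd} power of $r$, every surviving monomial of each $f_{1\ell}$ $(1\le\ell\le m)$ carries an \emph{even} power of $r$, and the surviving coefficients stay mutually independent, each being a non-zero multiple of one of the independent coefficients $a^{\pm}$, $b^{\pm}$, $c^{\pm}$. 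Thus I would write
\[
f_{10}(\nu)=r\,\widehat f_{0}(\rho,\mathrm{z}),\qquad f_{1\ell}(\nu)=\widehat f_{\ell}(\rho,\mathrm{z})\quad(1\le\ell\le m),\qquad \rho=r^{2},
\]
where $\widehat f_{0}$ is an arbitrary polynomial of degree $n-1$ and $\widehat f_{1},\dots,\widehat f_{m}$ are arbitrary polynomials of degree $n$ in $(\rho,\mathrm{z})$ with independent coefficients. Since $f_{10}$ is odd and the $f_{1\ell}$ are even in $r$, a zero $(r,\mathrm{z})$ of $f_1$ with $r\ne0$ corresponds to a solution $(\rho,\mathrm{z})$ of $\widehat f_{0}=\widehat f_{1}=\dots=\widehat f_{m}=0$ with $\rho>0$, which returns the two zeros $(\pm\sqrt{\rho},\mathrm{z})$.

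For $m=0$ this settles the claim at once. Here $f_1=f_{10}=r\,\widehat f_{0}(\rho)$ with $\widehat f_{0}$ an arbitrary polynomial of degree $\lfloor(n-1)/2\rfloor$ in $\rho$ (that is, $(n-1)/2$ if $n$ is odd and $(n-2)/2$ if $n$ is even). The zeros of $f_1$ with $r>0$ are exactly the positive roots of $\widehat f_{0}$, hence at most $\deg\widehat f_{0}$; choosing $\widehat f_{0}$ with that many distinct positive roots and noting that at a positive root $r^{*}$ of $f_{10}$ one has $f_{10}'(r^{*})=2(r^{*})^{2}\,\widehat f_{0}'((r^{*})^{2})\ne0$, all such zeros are simple and the bound is attained.

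For $m\ne0$ the upper bound is immediate: each $f_{1\ell}$ $(0\le\ell\le m)$ has total degree at most $n$ in $(r,z_1,\dots,z_m)$, so by Bezout's Theorem \cite{Ful} the system $f_1=0$ has at most $n^{m+1}$ isolated, hence at most $n^{m+1}$ simple, zeros. For the realization I would split the zeros of $f_1$ according to whether $r=0$. On $\{r=0\}$ they are the zeros of the complete degree-$n$ system $\widehat f_{1}(0,\mathrm{z})=\dots=\widehat f_{m}(0,\mathrm{z})=0$ in the $m$ variables $\mathrm{z}$; its coefficients being independent, it can be arranged to have $n^{m}$ distinct real simple zeros $\mathrm{z}^{(1)},\dots,\mathrm{z}^{(n^{m})}$ with $\widehat f_{0}(0,\mathrm{z}^{(a)})\ne0$ for every $a$, and then each $(0,\mathrm{z}^{(a)})$ is a simple zero of $f_1$, since the Jacobian of $f_1$ there is block-diagonal with blocks $\widehat f_{0}(0,\mathrm{z}^{(a)})$ and the nonsingular Jacobian of the $\mathrm{z}$-system. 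Off $\{r=0\}$ the zeros of $f_1$ are, by the correspondence above, twice the number of solutions of $\widehat f=(\widehat f_{0},\dots,\widehat f_{m})=0$ with $\rho>0$; taking into account that $\rho=r^{2}$ carries weight $2$, the natural count of these is $(n-1)n^{m}/2$. If one can choose the coefficients so that $\widehat f=0$ has exactly $(n-1)n^{m}/2$ solutions, all real, nondegenerate and with $\rho>0$, while the $\mathrm{z}$-system behaves as above, then $f_1$ acquires $n^{m}+2\cdot(n-1)n^{m}/2=n^{m+1}$ real zeros, which saturates the Bezout bound and forces all of them to be simple.

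The crux — and what separates $\phi=2\pi$ from Proposition \ref{prop10} — is precisely this realization. When $\phi\ne\pi,2\pi$ all the $f_{1\ell}$ are \emph{complete} polynomials of degree $n$ with independent coefficients and Bezout's bound is reached effortlessly; for $\phi=2\pi$ the parity constraint in $r$ kills many monomials, so one must exhibit a concrete admissible choice of the surviving (independent) coefficients realizing $(n-1)n^{m}/2$ real, nondegenerate, positive-$\rho$ solutions of $\widehat f=0$ together with $n^{m}$ real simple zeros of the $\mathrm{z}$-system. Controlling reality, positivity of $\rho$ and non-degeneracy simultaneously — for instance by prescribing the zeros of the $\widehat f_{\ell}$ with a triangular structure in $z_1,\dots,z_m$, and checking that the surplus in the ordinary Bezout count of $\widehat f$ is absorbed by solutions at infinity — is where the real work lies; the Bezout upper bound and the case $m=0$ are routine. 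This same realization yields $(n-1)n^{m}/2$ zeros with $r>0$, which is the number $N_1(m,n,2\pi)$ appearing in Theorem \ref{th7}$(a)$.
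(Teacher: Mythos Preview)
Your treatment of $m=0$ is correct and essentially the paper's: factor out $r$, pass to $\rho=r^{2}$, and count positive roots of a univariate polynomial of degree $\lfloor(n-1)/2\rfloor$.

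For $m\ne0$, however, you leave the realization undone. You say yourself that arranging $(n-1)n^{m}/2$ real, nondegenerate, positive--$\rho$ solutions of $\widehat f=0$ simultaneously with $n^{m}$ simple zeros of the $\mathrm z$--system ``is where the real work lies'' --- and you do not carry out that work. Without an explicit admissible choice of coefficients the clause ``this number can be reached'' remains unproved, so this is a genuine gap. The paper does not attempt your global Bezout--saturation strategy at all; instead it \emph{decouples the variables} by a particular choice of the (independent) surviving coefficients. For $n$ even it keeps only $a^{+}_{10k_10},\,b^{+}_{01k_10},\,c^{+}_{1,ij0},\,c^{+}_{\ell,00k_\ell 0}$ nonzero, so that $\widetilde f_{10}$ depends only on $z_1$ (complete, degree $n-1$), $f_{11}$ depends only on $r$ (even, degree $n$, hence $n/2$ positive roots), and each $f_{1\ell}$ with $\ell\ge2$ depends only on $z_\ell$ (complete, degree $n$); the zero set is a direct product and the count $(n-1)\cdot\frac{n}{2}\cdot n^{m-1}=n^{m}(n-1)/2$ is immediate. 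For $n$ odd the roles of $r$ and $z_1$ are swapped: $\widetilde f_{10}$ becomes an even polynomial in $r$ of degree $n-1$ (giving $(n-1)/2$ positive roots) and every $f_{1\ell}$ depends only on $z_\ell$. No triangular structure, no control of solutions at infinity, and no delicate simultaneous positivity argument is needed.

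A secondary point you should be aware of: the printed statement ($n^{m+1}$ for $m\ne0$) does not match what the paper's own proof actually establishes, namely $n^{m}(n-1)/2$ simple zeros with $r>0$, which is precisely the number fed into Theorem~\ref{th7}$(a)$. Your attempt to manufacture $n^{m+1}$ zeros by also counting the locus $r=0$ and the reflections $r\mapsto -r$ lies outside the domain $r\in\R_{+}$ imposed by the cylindrical change of variables~\eqref{cyl}; those zeros do not correspond to periodic orbits. The relevant target is the $r>0$ count $(n-1)n^{m}/2$, and for that the paper's decoupled construction gives a complete and elementary argument that you should supply in place of the sketch.
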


\begin{proof}
We have  $f_{10}(\nu) = r \widetilde{f}_{10}(\nu)$ with
$$
\widetilde{f}_{10}(\nu) = h_1  +r^2 h_3+ r^4 h_5 +r^6 h_7+\ldots  +
\left\{\begin{array}{l}
 r^{n-1} h_{n} \quad \quad {\rm if ~n ~is ~odd},\vspace*{-0.3cm}\\
r^{n-2} h_{n-1}\quad {\rm if ~n ~is ~even},
\end{array} \right.
$$
where
$$
\begin{array}{ll}
h_k =& \dis \sum_{k_1 + \ldots + k_m=0}^{n-k} z_1^{k_1} \ldots z_m^{k_m}
\bigg( \sum_{i\,odd,\,j\,even,\, i+j=k}  a^+_{ijk_1 \ldots k_m0} \,I_{(i+1,j,2\pi)} \\
&\dis + \sum_{i\,even,\,j\,odd,\,i+j=k} b^+_{ijk_1 \ldots k_m0}
\,I_{(i,j+1,2\pi)} \bigg).
\end{array}
$$

If $m\neq 0$ then  $\widetilde{f}_{10}(\nu)$ and $f_{1\ell}(\nu)$
are polynomials in the variables $(r, z_1, \ldots, z_{m})$ of degree
$n-1$ and $n$, respectively, for $ 1 \leq \ell \leq m$. From Bezout
Theorem the maximum number of simple zeros  of $f_1(\nu)$ is
$n^m(n-1)$. Since the exponents of $r$ in the function
$\widetilde{f}_{10}(\nu)$ are always even numbers,  the maximum
number of simple zeros of $f_1(\nu)$ is $n^m(n-1)/2$. In what
follows we provide a  particular example to prove that this number
is reached.

First if $n$ is even we take  $a^+_{10k_10}\neq0$, $b^+_{01k_10}\neq
0$, $c^+_{1,ij0}\neq 0$, $c^+_{\ell, 00k_{\ell}0}   \neq 0$ and
take zero all the other coefficients in other that
$\widetilde{f}_{10}(\nu)=\widetilde{f}_{10}(z_1)$, $f_{11}(\nu
)=f_{11}(r )$, and $f_{1\ell}(\nu)=f_{11}(z_\ell)$, where
\begin{align*}
\widetilde{f}_{10}(z_1)=&\sum_{k_1=0}^{n-1} z_1^{k_1} \left(
a^+_{10k_10} I_{(2,0,2\pi)} +   b^+_{01k_10} I_{(0,2,2\pi)}
\right),\\
f_{11}(r ) =& \sum_{i,j\,even,\, i+j=0
}^{n} r^{i+j} \,c^+_{1,ij0}  \,I_{(i,j,2\pi)},\\
f_{1\ell}(z_\ell) =&\sum_{k_{\ell}=0}^{n} z_\ell^{k_\ell}\,
\,c^+_{\ell, 00k_{\ell}0}\, I_{(0,0,2\pi)},
\end{align*}
for $2\leq \ell \leq m$. Thus, $\widetilde{f}_{10}(z_1)$ is a
complete polynomial of degree $n-1$ in the variable $z_1$,
$f_{11}(r)$ is an even polynomial of degree $n$  in the variable
$r$, and $f_{1\ell}(z_\ell)$ is a complete polynomial of degree $n$
in the variable $z_\ell$ for all $2 \leq \ell \leq m$. Since  the
exponents of  $r$ in $f_{11}(r)$ is even, then $f_1(\nu)$ can have
$n^m(n-1)/2 $ simple zeros with $r>0$.

On the other hand, if $n$ is odd we take $a^+_{ij0}\neq0$,
$b^+_{ij0} \neq0$, $c^+_{\ell,00k_\ell0}\neq 0$ and we  take zero
all the other coefficients and then we obtain
$\widetilde{f}_{10}(\nu)=\widetilde{f}_{10}(r)$ and
$f_{1\ell}(\nu)=f_{1\ell}(z_\ell)$, where
$$
\begin{array}{ll}
\widetilde{f}_{10}(r) =& h_1 + r h_2+ r^2 h_3  + \ldots + r^{n-1} h_{n},\\
f_{1\ell}(\nu)= &\dis \sum_{k_{\ell}=0}^{n} {z_{\ell}}^{k_{\ell}} \,
c_{{\ell}, 00 k_{\ell}0}\, I_{(0,0,2\pi)},
\end{array}
$$
for $1 \leq \ell \leq m$. Then, $\widetilde{f}_{10}(r)$ is a
polynomial of degree $n-1$ in the variable $r$, whose exponents are
always even.  In a similar way $f_{1\ell}(z_\ell)$ is a polynomial
of degree $n$ in the variable $z_\ell$ for $1\leq \ell\leq m$.
Therefore, $f_1(\nu)$ can have $n^m(n-1)/2$ simple  zeros with
$r>0$.

If  $m=0$ then $\nu=r$ and $f_1(\nu) = r\widetilde{f}_{10}(r)$. So
the number of simple zeros can be $n-1$ if $n$ is odd, and $n-2$ if
$n$ is even. Since the exponent of $r$ in $\widetilde{f}_{10}$ is
even, the maximum number of simple zeros with $r>0$ of $f_1(\nu)$ is
$(n-1)/2$ if $n$ is odd, and $(n-2)/2$ if $n$ is even.

Now, we exhibit a particular example where the maximum number of
simple zeros of $f_1(\nu)$ can be reached. Take $a^+_{ij0}\neq0$,
$b^+_{ij0} \neq 0$ and we take zero all the other coefficients so
that $\widetilde{f}_{10}(r)$ is an even polynomial in  the variable
$r$ of degree $n-1$ if $n$ is odd, and $n-2$ is $n$ if even. So, the
number of simple zeros of $f_1(\nu)$ with $r>0$ can be $(n-1)/2$ if
$n$ is odd, and $(n-2)/2$ if $n$ is even.

In both particular cases, $m\neq 0$ and $m=0$, the coefficients of
$f_1(\nu)$ are independent. Therefore, the  maximum number of simple
zeros with $r>0$ of $f_1(\nu)$  can be reached.
\end{proof}

Now, we emphasize that the averaging function $f_2$ of the continuous
system \eqref{As1}, for $\phi=2\pi$, is given by $f_2(\nu)= \big(
f_{20}(\nu), \ldots, f_{2m}(\nu)  \big)$ being
\begin{equation}
\label{f2lc}
\begin{array}{ll}
f_{2\ell}(\nu)=&   2\dis\widetilde{G}_{1\ell}(\nu) +
4\int_{0}^{2\pi}  \big( F^+_{2\ell}(s,\z_\nu)  +
\widetilde{F}^+_{1\ell}(s,\z_\nu) \big)\,ds, \vspace*{0.2cm}
\end{array}
\end{equation}
for $ 0 \leq \ell \leq m$,  $F_{2\ell}^+$, $\widetilde{G}_{1\ell}$
and $\widetilde{F}_{1\ell}^+$  given in  \eqref{FF}, \eqref{G1l} and
\eqref{Ftil}, respectively.

\begin{proposition}\label{prop21}
Assume $m=0$ and $\phi=2\pi$. If $f_1\equiv 0$ then $f_2$ has at
most $2n$ simple zeros. Moreover, the lower bound for the number of simple
zeros is $n$.
\end{proposition}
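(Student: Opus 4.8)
The plan is the following. Since $m=0$ and $\phi=2\pi$, system \eqref{As1} is continuous, the manifold $\CZ$ is one dimensional, $\nu=r$, and the averaged function reduces to the single scalar function $f_2(\nu)=f_{20}(r)$ of the variable $r>0$, given by \eqref{f2lc} with $\ell=0$. Likewise $f_1(\nu)=f_{10}(r)$, and, by \eqref{f1l.c} together with statement $(c)$ of Lemma \ref{lemtrig}, the assumption $f_1\equiv 0$ amounts to the vanishing of (or to linear relations among) the planar coefficients $a^+_{ij0\ldots 0}$ with $i$ odd and $j$ even and the planar coefficients $b^+_{ij0\ldots 0}$ with $i$ even and $j$ odd. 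I would begin by recording this, since it says precisely which coefficients of $\X$ survive in $f_{20}$.

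For the upper bound I would only count degrees in $r$. From \eqref{AB} and \eqref{FF}, $A^+_1(\cdot,\z_\nu)$ equals $r^{-1}$ times a polynomial of degree $\le n$, while $A^+_2,B^+_2$ are polynomials of degree $\le n$, the functions $y^+_{1\ell}(\cdot,\z_\nu)$ in \eqref{yii} are polynomials of degree $\le n$, and $\gamma_1(\nu)$ in \eqref{gammaw} is a polynomial of degree $\le n$; hence $F^+_{20}$, $\widetilde F^+_{10}$ and $\widetilde G_{10}$ are Laurent polynomials in $r$ with exponents between $-1$ and $2n-1$. Consequently $r\,f_{20}(r)$ is a polynomial in $r$ of degree at most $2n$, so $f_{20}$ has at most $2n$ zeros in $r>0$, and for generic coefficients they are simple; this gives the bound $2n$.

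For the lower bound I would build an explicit perturbation. Set to zero every planar coefficient of $X_{a^\pm}$ and $X_{b^\pm}$: then $f_1\equiv 0$ holds automatically while all the other coefficients remain free, and moreover $A^+_1(\cdot,\z_\nu)=A^+_2(\cdot,\z_\nu)=0$, so that $F^+_{20}(\cdot,\z_\nu)=B^+_2(\cdot,\z_\nu)$ and the $\p/\p r$ contribution to $\widetilde F^+_{10}$ disappears. Keeping only the coupling with one transverse coordinate $z_1$ (so that $d\ge 1$), $\tfrac12 f_{20}(r)$ then reduces to $\int_0^{2\pi}B^+_2(s,\z_\nu)\,ds$ plus the $z_1$--part of $\int_0^{2\pi}\widetilde F^+_{10}(s,\z_\nu)\,ds+\widetilde G_{10}(\nu)$. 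By statement $(c)$ of Lemma \ref{lemtrig} the first integral is $r$ times a polynomial in $r^2$, whose coefficients are prescribed freely and independently by those of $Y_{\al^+}$ and $Y_{\be^+}$. The remaining part is bilinear in the coefficients $c^+_{1,i'j'0\ldots 0}$ of $X_{c_1^+}$ and in the coefficients of $X_{a^+},X_{b^+}$ that are linear in $z_1$; the point is that $y^+_{11}$, $\p g_{10}/\p z_1$ and $\gamma_1$ all carry the exponential factor $e^{\mu_1 s}$, so the parity obstruction of Lemma \ref{lemtrig} no longer applies, and the integrals that occur are $\int_0^{2\pi}\cos^a s\,\sin^b s\,e^{\mu_1 s}\,ds$ and finite products of such quantities, which are nonzero for every $\mu_1$ outside a finite exceptional set. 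Fixing such a value of $\mu_1$, these coefficients can be tuned so that $f_{20}(r)$ becomes a complete polynomial in $r$ of degree $n$ if $n$ is even and $n-1$ if $n$ is odd, with independent coefficients, and hence can be taken with exactly that many distinct simple zeros in $r>0$. This is the asserted lower bound.

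The routine but delicate step is the last one: pinning down which coefficients are genuinely independent once $f_1\equiv 0$ has been imposed, and verifying that the bilinear map from the transverse--coupling coefficients to the coefficients of $f_{20}(r)$ is onto a space of polynomials of the required degree — equivalently, that the exponential--trigonometric integrals above do not all degenerate for a good choice of $\mu_1$. This is carried out with the explicit formulas of the Appendix and the non--vanishing part of Lemma \ref{lemtrig}; the gap between this lower bound and the upper bound $2n$ is left open, as the statement indicates.
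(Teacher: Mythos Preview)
Your upper bound is fine and matches the paper's: $r f_{20}(r)$ is a polynomial of degree at most $2n$, hence at most $2n$ simple zeros.

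The lower bound is where your approach diverges from the paper's, and where it has a gap. The paper does \emph{not} kill the planar coefficients of $X_{a^+},X_{b^+}$; on the contrary, it keeps precisely $a^+_{ij0},\,b^+_{ij0},\,\alpha^+_{ij0},\,\beta^+_{ij0}$ and sets everything else (in particular all $c^+_\ell$ and all $z$--dependent coefficients) to zero. With that choice $\widetilde G_{10}$ and the $z_\omega$--parts of $\widetilde F^+_{10}$ disappear, and the key structural fact is that, subject to the relations \eqref{ab=0} imposed by $f_1\equiv 0$,
\[
r\int_0^{2\pi}F^+_{20}(s,\z_\nu)\,ds
\]
is an \emph{even} polynomial in $r$ of degree $2n$ when $n$ is even and $2n-2$ when $n$ is odd, with independent coefficients. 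Passing to $u=r^2$ yields exactly $n$ (resp.\ $n-1$) positive simple zeros. No transverse direction is used, so the argument works for every $d\ge 0$.

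Your construction instead relies on the transverse direction $z_1$ and therefore needs $d\ge 1$, which already leaves the case $d=0$ uncovered. More seriously, the sentence ``$f_{20}(r)$ becomes a complete polynomial in $r$ of degree $n$ if $n$ is even and $n-1$ if $n$ is odd'' is not justified by your set-up. With the planar $a,b$ coefficients set to zero, $\int_0^{2\pi}B^+_2\,ds$ contributes only odd powers $r,r^3,\dots$ up to $r^{n}$ ($n$ odd) or $r^{n-1}$ ($n$ even), while the $z_1$--coupling term $\widetilde G_{10}+\int_0^{2\pi}\widetilde F^+_{10}\,ds$ is \emph{bilinear} in the families $\{a^+_{ij1},b^+_{ij1}\}$ and $\{c^+_{1,pq0}\}$ and has degree up to $2n-1$ in $r$, not $n$ or $n-1$. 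Nothing in your argument forces the degree to depend on the parity of $n$ the way you claim; in fact, if one keeps all these coupling coefficients, the parity split you wrote is at best reversed, and if one restricts them to reduce the degree, the surjectivity (``complete polynomial with independent coefficients'') from a bilinear map has to be proved, not asserted. The paper's evenness argument avoids all of this and gives the stated parity dependence directly.
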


\begin{proof}
If  $m=0$ then $\nu=r$ and $f_1(\nu)=f_{10}(r)$. Assume that $f_1
\equiv 0$. From \eqref{f1l.c} we obtain
\begin{equation}
\label{ab=0}
\begin{array}{l}
\dis\sum_{ i \,odd,\,j\,even, \,P =s}^{n} a^+_{ij0} I_{(i+1,j,2\pi)}
+\dis\sum_{ i \,even,\,j\,odd,\, P=s }^{n}  b^+_{ij0 } \dis
I_{(i,j+1,2\pi)}=0,
\end{array}
\end{equation}
where $P=i+j$ and $0 \leq s\leq n$.

Furthermore, by \eqref{f2lc} we have $f_2(\nu)=f_{20}(r)$. Therefore,
from statement $(c)$ of Lemma \ref{lemtrig} and \eqref{ab=0}, we
conclude that $\widetilde{G}_{10}(\nu)$ and $\dint_0^{2\pi}
\widetilde{F}_{10}(s,\z_{\nu})\,ds$  are complete polynomials of
degree $2n-1$ in the variable $r$, and
\begin{equation*}
\int_0^{2\pi} F^+_{20}(s,\z_{\nu}) \,ds = \sum_{s=0}^{N_1} R_s \,
r^{2 s+1}  +\frac{1}{r}\sum_{k=0}^{n} Q_k \, r^{2k},
\end{equation*}
where $\z_{\nu}=(r,0,\ldots,0)\in\R^{d+1}$, $R_s$ and $Q_k$ are constants,  $N_1= \dfrac{n-2}{2}$ if $n$ is even, and $N_1 = \dfrac{n-1}{2}$ if $n$ is odd. Therefore, $r
\dint_0^{2\pi} F^+_{20}(s,\z_{\nu}) \,ds$ is an even polynomial in
the variable $r$. Since $r>0$ it follows that  $r f_2(\nu)=0$ if and
only if $f_2(\nu)=0$. By Bezout Theorem the maximum number of simple
zeros of $f_2(\nu)$ is $2n$.

In order to exhibit the lower bound for the number of simple zeros
of $f_2(\nu)$, we provide a particular example.  Then, take
$a^\pm_{ij0} \neq 0$,  $b^\pm_{ij0} \neq 0$, $\alpha^\pm_{ij0} \neq
0$, and $\beta^\pm_{ij0} \neq 0$ and we take zero all the other
coefficients in such a way that  $f_{20}(r) =  4
\dint_0^{2\pi}F^+_{20}(r,\z_{\nu}) d\theta$. Therefore, $r f_{20}
(r)$ is a polynomial in $r$ of degree $2n$. Since $r f_{20} (r)$ is an even polynomial in
$r$, then the number of simple zeros of $f_2(\nu)$ with $r>0$ can be
$n$, and this number can
be reached due to the independence of all coefficients.
\end{proof}

\begin{proof}[Proof of Theorem \ref{th7}.]
Applying Theorem \ref{BRs1} to the function $f_1$ given in
Proposition \ref{prop20}, statement $(a)$ holds. We apply Theorem
\ref{BRs1} to the function $f_2$ given in Proposition \ref{prop21}
and we conclude statement $(b)$.
\end{proof}

\section*{Appendix}

In this appendix we shall exhibit some general expression of
functions that appears in subSection \ref{sub3.2}.

We will denote by $\lambda_{ijk_1 \ldots k_m 1_\omega}$  the
coefficient of $x^i y^j z_1^{k_1} \ldots z_m^{k_m} z_\omega$, and by
$\lambda_{ijk_1 \ldots k_m 0}$ the coefficient of $x^i y^j z_1^{k_1}
\ldots z_m^{k_m}$ in system \eqref{As1} when  $\lambda = a^\pm,
b^\pm, \alpha^\pm, \beta^\pm, c^{\pm}_{\ell}, \gamma^{\pm}_{\ell}$
for all  $ 0 \leq \ell \leq m$ and $m+1 \leq \omega \leq d$. Recall
that $\nu=(r,z_1,\ldots,z_m)$ and
$\z_{\nu}=(r,z_1,\ldots,z_m,0,\ldots,0)\in \R^{d+1}$.

For the next expressions, take $P=    i+j+k_1+\ldots +k_m$ and $Q=p+q+ L_1 + \ldots + L_m$.

{From} \eqref{AB} and \eqref{gl} we obtain
$$
\begin{array}{ll}
\dis\frac{\partial g_{10}}{\partial z_\l }(\z_\nu) =&\dis
\sum_{P=0}^{n-1} r^{i+j}z_1^{k_1} \ldots z_m^{k_m} \dis \bigg(
a^+_{ijk_1\ldots k_m 1_\l} \int_0^\phi e^{\mu_\omega s}\cos^{i+1} s\,
\sin^j s\,ds \vspace*{0.15cm}\\
&+ \,b^+_{ijk_1 \ldots k_m 1_\l} \, \dint_0^\phi e^{\mu_\omega s}\cos^i
s\,\sin^{j+1} s\,ds \vspace*{0.15cm}\\
& \dis + \,a^-_{ijk_1\ldots k_m 1_\l}  \int_\phi^{2\pi} e^{\mu_\omega s}
\cos^{i+1} s\,\sin^j s\,ds  \vspace*{0.15cm}\\
&+\, b^-_{ijk_1 \ldots k_m 1_\l} \, \dint_\phi^{2\pi} e^{\mu_\omega s}
\cos^{i} s\,\sin^{j+1} s\,ds  \bigg), \vspace*{0.15cm}\\
\dis\frac{\partial g_{1\ell}}{\partial z_\l}(\z_\nu) =&
\dis\sum_{P=0}^{n-1} r^{i+j}z_1^{k_1} \ldots z_m^{k_m}  \dis \bigg(
c^+_{l,ijk_1\ldots k_m 1_\l}
\int_0^\phi e^{\mu_\omega s} {\cos^i s}\, {\sin^j s} \, ds  \vspace*{0.15cm}\\
&+  \,c^-_{\ell,ijk_1\ldots k_m 1_\l} \dint_\phi^{2\pi}
e^{\mu_\omega s} {\cos^i s}\, {\sin^j s}\, ds \bigg),
\end{array}
$$
for  $1 \leq \ell \leq  m$ and $m+1 \leq \l \leq  d$.

{From} \eqref{gammaw}  we get
$$
\begin{array}{ll}
\gamma_\l(\nu) =& \dis \dfrac{-1}{1- e^{-\mu_{\omega}2\pi}} \sum_{P=0}^{n}
r^{i+j} z_1^{k_1} \ldots z_m^{k_m} \bigg(     c^+_{\l,ijk_1\ldots k_m0}
\, \dis \int_0^{\phi} e^{-\mu_\omega s} \cos^i s\,\sin^j s\,ds \vspace*{0.15cm} \\
&+ ~ c^-_{\l,ijk_1\ldots k_m0} \, \dis\int_{\phi}^{2\pi}
e^{-\mu_\omega(2\pi +s)} \cos^i s\,\sin^js\,ds \bigg),
\end{array}
$$
for $ m+1 \leq \l \leq d$.

{From} the above equalities and \eqref{G1l} we obtain for $1 \leq
\ell \leq m$ that
$$
\begin{array}{l}
\widetilde{G}_{10}(\nu) = \dis \sum_{\l=m+1}^{d} \Bigg[
\sum_{P=0}^{n-1} r^{i+j}z_1^{k_1} \ldots z_m^{k_m} \bigg(
a^+_{ijk_1\ldots k_m 1_\l}
\dint_0^\phi e^{\mu_\omega s}\cos^{i+1} s\,\sin^j s\,ds \vspace*{0.15cm}\\
+ ~b^+_{ijk_1 \ldots k_m 1_\l} \, \dint_0^\phi e^{\mu_\omega s}\cos^i
s\,\sin^{j+1} s\,ds  \dis + \,a^-_{ijk_1\ldots k_m 1_\l}  \dint_\phi^{2\pi}
e^{\mu_\omega s}\cos^{i+1} s\,\sin^j s\,ds \vspace*{0.15cm}\\
+ ~ b^-_{ijk_1 \ldots k_m 1_\l} \, \dint_\phi^{2\pi} e^{\mu_\omega
s}\cos^{i} s\,\sin^{j+1} s\,ds  \bigg) \Bigg]  \Bigg[
\dis\dfrac{-1}{1- e^{-\mu_{\omega}2\pi}} \sum_{Q=0}^{n} r^{i+j} z_1^{k_1}
\ldots z_m^{k_m}  \vspace*{0.15cm}\\
\bigg(c^+_{\l,ijL_1\ldots L_m0}  \, \dis \int_0^{\phi} e^{-\mu_\omega s} \cos^i
s\,\sin^js\,ds + \, c^-_{\l,ijL_1\ldots L_m0} \,
\dis\dint_{\phi}^{2\pi} \,e^{-\mu_\omega(2\pi+s)} \cos^i s\,\sin^js\,ds \bigg)
\Bigg],
\end{array}
$$
$$
\begin{array}{l}
\widetilde{G}_{1\ell}(\nu) = \dis \sum_{\l=m+1}^{d} \Bigg[
\sum_{P=0}^{n-1} r^{i+j}z_1^{k_1} \ldots z_m^{k_m}  \dis \bigg(
c^+_{\ell,ijk_1\ldots k_m 1_\l}
\int_0^\phi e^{\mu_\omega s} {\cos^i s}\, {\sin^j s} \, ds  \vspace*{0.15cm}\\
+  \,c^-_{\ell,ijk_1\ldots k_m 1_\l} \dint_\phi^{2\pi} e^{\mu_\omega
s} {\cos^i s}\, {\sin^j s}\, ds \bigg) \Bigg]  \Bigg[
\dis\frac{-1}{1- e^{-\mu_{\omega}2\pi}} \sum_{Q=0}^{n} r^{i+j} z_1^{L_1}
\ldots z_m^{L_m}  \vspace*{0.15cm}\\
\bigg(     c^+_{\l,ijL_1\ldots L_m0}  \, \dis \int_0^{\phi} e^{-\mu_\omega s}
\cos^i s\,\sin^js\,ds 
+  \,c^-_{\l,ijL_1\ldots L_m0}  \,
\dis\int_{\phi}^{2\pi} e^{-\mu_\omega(2\pi +s)} \cos^i s\,\sin^js\,ds \bigg) \Bigg].
\end{array}
$$

Now, from \eqref{FF} we compute
\begin{align*}
\dis\frac{\p F^\pm_{10}}{\p r} (s,\varphi(s,\z_\nu))= &
\frac{1}{r} \sum_{P=0}^{n}  (i+j) \,r^{i+j} z_1^{k_1} \ldots z_m^{k_m} \\
&\bigg( a^\pm_{ijk_1...k_m0}{\cos^{i+1} s} \,{\sin^j  s}+
b^\pm_{ijk_1...k_m0}
{\cos^i s} \,{\sin^{j+1} s} \bigg),    \\
\dis\frac{\p F^\pm_{10}}{\p z_\rho} (s,\varphi(s,\z_\nu))= &
\sum_{P=0}^{n}  k_\rho\, r^{i+j} z_1^{k_1} \ldots z_\rho^{k_\rho -1}
\ldots z_m^{k_m} \\
&\bigg( a^\pm_{ijk_1...k_m0} {\cos^{i+1} s }\,{\sin^j s} + b^\pm_{
ijk_1...k_m0} {\cos^{i} s }\,{\sin^{j+1}  s}  \bigg) ,\\
\dis\frac{\p F^\pm_{10}}{\p z_\l} (s,\varphi(s,\z_\nu))= &
\sum_{P=0}^{n-1}  r^{i+j} z_1^{k_1} \ldots z_m^{k_m} \\
&\bigg( a^\pm_{ijk_1...k_m 1_\l} {\cos^{i+1} s }\,{\sin^j s} +
b^\pm_{ijk_1...k_m1_\l} {\cos^{i} s}\,{\sin^{j+1}s}   \bigg),
\end{align*}
\begin{align*}
\dis \frac{\p F_{1\ell}^\pm}{\p r}(s,\varphi(s,\z_\nu)) =&
\frac{1}{r} \sum_{P = 0}^{n} (i+j) \,
r^{i+j}  z_1^{k_1} \ldots z_m^{k_m} \, c^\pm_{\ell, ijk_1...k_m0}
{\cos^i s}\, {\sin^j s} , \\
\dis \frac{\p F_{1\ell}^\pm}{\p z_\rho}(s,\varphi(s,\z_\nu)) =
&\sum_{P = 0}^{n} \,k_\rho\,  r^{i+j} z_1^{k_1} \ldots z_p^{k_\rho -1}
\ldots z_m^{k_m} \,c^\pm_{\ell, ijk_1...k_m0} {\cos^i  s}\,{\sin^j s} ,\\
\dis \frac{\p F_{1\ell}^\pm}{\p z_\l}(s,\varphi(s,\z_\nu)) = &
\sum_{P=0}^{n-1}  r^{i+j} z_1^{k_1}  \ldots  z_m^{k_m} \,
c^\pm_{\ell,ijk_1...k_m 1_\l} {\cos^i s} \,{\sin^j s} ,
\end{align*}
for  $1 \leq \ell \leq m$, $1 \leq \rho \leq  m$ and $m+1 \leq \l
\leq d$.

Note that when $m=d$  we do not consider the functions $\dis
\frac{\p F_{10}^\pm}{\p z_\l}$ and $\dis \frac{\p F_{1\ell}^\pm}{\p
z_\l}$.

Now, from \eqref{AB} and \eqref{yii}  we get
\begin{equation*}
\begin{array}{lll}
y^\pm_{10}(s,\z_\nu) & = & \dis \sum_{P=0}^n r^{i+j}  z_1^{k_1}
\ldots z_m^{k_m} \,\big( a^\pm_{ijk_1\ldots k_m 0} I_{(i+1,j,s)}  +
\,b^\pm_{ijk_1\ldots k_m 0}
I_{(i,j+1,s)} \big),\\
y^\pm_{1\rho}(\T,\z_\nu) & = & \dis \sum_{P=0}^{n} \,r^{i+j} z_1^{k_1}
\ldots z_m^{k_m} \,c^\pm_{\rho,ijk_1 \ldots k_m 0} I_{(i,j,s)}, \\
y^\pm_{1\omega}(\T,\z_\nu) & = & \dis  \sum_{P=0}^{n} \, r^{i+j}
z_1^{k_1} \ldots z_m^{k_m} \,c^\pm_{\l,ijk_1 \ldots k_m 0} \,
\int_{0}^{s}  e^{\mu_\l(s-\tau)}  \cos^i \tau\,\sin^js\,d\tau ,
\end{array}
\end{equation*}
for $ 1 \leq \rho \leq m$ and $ m+1 \leq \omega \leq d$. Therefore
$$
\begin{array}{l}
\dis\int \dis\frac{\p F^\pm_{10}}{\p r} (s,\varphi(s,\z_\nu))
y^\pm_{10}(s,\z_\nu)\,ds= \dis \dfrac{1}{r}
\sum_{P=0}^{n}\,\sum_{Q=0}^{n}\,
(i+j)\, r^{i+j+p+q} \,z_1^{k_1+L_1} \ldots z_m^{k_m+L_m}  \vspace*{0.15cm}\\
\dis \bigg( a^\pm_{ijk_1...k_m0} a^\pm_{pqL_1\ldots L_m 0} \int
{\cos^{i+1} s} \,{\sin^j  s}\,
I_{(p+1,q,s)}ds\vspace*{0.15cm}\\
\dis +\, b^\pm_{ijk_1...k_m0} a^\pm_{pqL_1\ldots L_m 0} \int {\cos^i
s} \,{\sin^{j+1} s} \,
I_{(p+1,q,s)}ds\vspace*{0.15cm}\\
\dis + \,  a^\pm_{ijk_1...k_m0} b^\pm_{pqL_1\ldots L_m 0}\int {\cos^{i+1} s}
\,{\sin^j  s}\,I_{(p,q+1,s)}ds \vspace*{0.15cm} \\
\dis +\, b^\pm_{ijk_1...k_m0}  b^\pm_{pqL_1\ldots L_m 0}  \int
\cos^is\,\sin^{j+1}s\, I_{(p,q+1,s)}ds \bigg) ,
\end{array}
$$
$$
\begin{array}{l}
\dis \int \frac{\p F^\pm_{10}}{\p z_{\rho}} (s,\varphi(s,\z_\nu))
y^\pm_{1\rho}(s,\z_\nu) ds= \dis \sum_{P=0}^{n}\sum_{Q=0}^{n}
k_\rho r^{i+j+p+q} z_1^{k_1+L_1} \ldots z_\rho^{k_\rho + L_\rho-1}
\ldots z_m^{k_m+L_m} \vspace*{0.15cm}\\
\dis \bigg( a^\pm_{ijk_1...k_m0} \,c^\pm_{\rho,pqL_1 \ldots L_m 0}
\int {\cos^{i+1} s }\,{\sin^j s} \,I_{(p,q,s)} \,ds \vspace*{0.15cm}\\
+\, b^\pm_{ijk_1...k_m0} \, c^\pm_{\rho,pqL_1 \ldots L_m 0} \int
{\cos^{i} s }\,{\sin^{j+1}s\,I_{(p,q,s)}} \,ds \bigg)  ,
\end{array}
$$
$$
\begin{array}{l}
\dis \int \frac{\p F^\pm_{10}}{\p z_{\omega}} (s,\varphi(s,\z_\nu))
y^\pm_{1\omega}(s,\z_\nu) ds=
\dis \sum_{P=0}^{n-1} \,\sum_{Q=0}^{n} r^{i+j+p+q} z_1^{k_1+L_1}\,
z_2^{k_2+L_2} z_3^{k_3+L_3} ... \, z_m^{k_m+L_m} \vspace*{0.15cm}\\
\dis \Bigg( a^\pm_{ijk_1...k_m1_\l} c^\pm_{\l,pqL_1 \ldots L_m 0}
\int {\cos^{i+1} s }\,{\sin^j s} \,  \bigg( \int_0^s e^{\mu_\l(s-\tau)}
\cos^p\tau \sin^q \tau d\tau \bigg) ds \vspace*{0.15cm} \\
\dis +\,b^\pm_{ijk_1...k_m1_\l} c^\pm_{\l,pqL_1 \ldots L_m 0} \int
{\cos^{i} s}\,{\sin^{j+1}s}  \, \bigg( \int_0^s e^{\mu_\l(s-\tau)} \cos^p\tau
\sin^q \tau d\tau \bigg) ds \Bigg) ,
\end{array}
$$

$$
\begin{array}{l}
\dis \int \frac{\p F_{1\ell}^\pm}{\p r}(s,\varphi(s,\z_\nu))
y^\pm_{10}(s,\z_\nu) ds= \dis \frac{1}{r} \sum_{P = 0}^{n} \sum_{Q =
0}^{n} (i+j) \,
r^{i+j+p+q}  z_1^{k_1+L_1} z_2^{k_2+L_2} ... \, z_m^{k_m+L_m} \vspace*{0.15cm}\\
\dis \bigg( c^\pm_{\ell, ijk_1...k_m0} a^\pm_{pqL_1\ldots L_m 0}
\int  {\cos^i s}\, {\sin^j s} \,
I_{(p+1,q,s)} \,ds \vspace*{0.15cm}\\
\dis +  \, c^\pm_{\ell, ijk_1...k_m0} b^\pm_{pqL_1\ldots L_m 0} \int
{\cos^i s}\, {\sin^j s} \, I_{(p,q+1,s)} \,ds \bigg),
\end{array}
$$
$$
\begin{array}{l}
\dis \int \frac{\p F_{1\ell}^\pm}{\p z_{\rho}}(s,\varphi(s,\z_\nu))
y^\pm_{1\rho}(s,\z_\nu) ds= \dis \sum_{P = 0}^{n} \,\sum_{Q = 0}^{n} \,
k_\rho \,  r^{i+j+p+q} z_1^{k_1+L_1} ...\,  z_\rho^{k_\rho +L_\rho -1}
...\, z_m^{k_m+L_m} \vspace*{0.15cm}\\
\dis c^\pm_{\ell, ijk_1...k_m0}  \,c^\pm_{\rho,pqL_1 \ldots L_m 0}
\int {\cos^i  s}\,{\sin^j s} \,I_{(p,q,s)} ds,
\end{array}
$$
$$
\begin{array}{l}
\dis \int \frac{\p F_{1\ell}^\pm}{\p z_\omega}(s,\varphi(s,\z_\nu))
y^\pm_{1\omega}(s,\z_\nu) ds= \sum_{P =0}^{n-1}  \,\sum_{Q=0}^{n}
r^{i+j+p+q} z_1^{k_1+L_1} \, z_2^{k_2+L_2} z_3^{k_3+L_3} ... \,
z_m^{k_m+L_m}  \vspace*{0.15cm}\\ \dis c^\pm_{\ell,ijk_1...k_m
1_\omega} c^\pm_{\omega,pqL_1 \ldots L_m 0} \,\int {\cos^i s}
\,{\sin^j s}  \bigg( \int_0^s e^{\mu_\l(s-\tau)} \cos^p \tau \sin^q \tau
d\tau \bigg) ds ,
\end{array}
$$
for $1 \leq \ell \leq m$, $1 \leq \rho \leq m$ and $m+1  \leq \omega
\leq d$.

Moreover, from \eqref{FF} we get
$$
\begin{array}{l}
\dis \int_0^\phi F_{20}^+(s,\z_\nu)\,ds =  \sum_{P=0}^{n}\al^+_{ijk_1
\ldots k_m0} \,r^{i+j} z_1^{k_1}\ldots z_m^{k_m}  \,I_{(i+1,j,\phi)}
\vspace*{0.15cm}\\
+  \dis\sum_{P=0}^{n} \beta^+_{ijk_1\ldots k_m0} \,r^{i+j} z_1^{k_1}
\ldots z_m^{k_m}  \,I_{(i,j+1,\phi)} \\
-\dis\frac{1}{r}\sum_{ P=0}^{n}\,\sum_{Q=0}^{n} a^+_{ijk_1\ldots k_m0}
b^+_{pqL_1\ldots L_m0} \,r^{i+j+p+q}\, z_1^{k_1+L_1} \ldots z_m^{k_m+L_m}
\,I_{(i+p+2,j+q,\phi)}\vspace*{0.15cm}\\
+\dis\frac{1}{r}\sum_{P=0}^{n} \,\sum_{Q=0}^{n} a^+_{ijk_1\ldots k_m0}
a^+_{pqL_1\ldots L_m0} \,r^{i+j+p+q} \,z_1^{k_1+L_1} \ldots z_m^{k_m+L_m}
\,I_{(i+p+1,j+q+1,\phi)}\vspace*{0.15cm}\\
-\dis\frac{1}{r}\sum_{P=0}^{n} \,\sum_{Q=0}^{n} b^+_{ijk_1\ldots k_m0}
b^+_{pqL_1\ldots L_m0} \,r^{i+j+p+q} \,z_1^{k_1+L_1} \ldots z_m^{k_m+L_m}
\,I_{(i+p+1,j+q+1,\phi)}\vspace*{0.15cm}\\
+ \dis\frac{1}{r}\sum_{P=0}^{n}\,\sum_{Q=0}^{n} a^+_{ijk_1\ldots
k_m0} b^+_{pqL_1\ldots L_m0} \,r^{i+j+p+q} \,z_1^{k_1+L_1} \ldots
z_m^{k_m+L_m}  \,I_{(i+p,j+q+2,\phi)},
\end{array}
$$
$$
\begin{array}{l}
\dis\int_\phi^{2\pi} F_{20}^-(s,\z_\nu) \,ds=
\dis\sum_{P=0}^{n}\al^-_{ijk_1\ldots k_m0} \,r^{i+j}  z_1^{k_1}
\ldots z_m^{k_m} \,J_{(i+1,j,\phi)} \vspace*{0.15cm}\\
+  \dis\sum_{P=0}^{n} \beta^-_{ijk_1\ldots k_m0} \,r^{i+j} z_1^{k_1}
\ldots z_m^{k_m}\,J_{(i,j+1,\phi)} \vspace*{0.15cm}\\
-\dis \frac{1}{r}\sum_{ P=0}^{n}\,\sum_{Q=0}^{n}a^-_{ijk_1\ldots k_m0}
b^-_{pqL_1\ldots L_m0} \,r^{i+j+p+q} \,z_1^{k_1+L_1} \ldots z_m^{k_m+L_m}
\,J_{(i+p+2,j+q,\phi)}\vspace*{0.15cm}\\
+\dis \frac{1}{r}\sum_{P=0}^{n} \,\sum_{Q=0}^{n} a^-_{ijk_1\ldots k_m0}
a^-_{pqL_1\ldots L_m0} \,r^{i+j+p+q} \,z_1^{k_1+L_1} \ldots z_m^{k_m+L_m}
\,J_{(i+p+1,j+q+1,\phi)}\vspace*{0.15cm}\\
-\dis\frac{1}{r}\sum_{P=0}^{n} \,\sum_{Q=0}^{n} b^-_{ijk_1\ldots k_m0}
b^-_{pqL_1\ldots L_m0} \,r^{i+j+p+q} \,z_1^{k_1+L_1} \ldots z_m^{k_m+L_m}
\,J_{(i+p+1,j+q+1,\phi)}\vspace*{0.15cm}\\
+\dis \frac{1}{r}\sum_{P=0}^{n} \,\sum_{Q=0}^{n} a^-_{ijk_1\ldots
k_m0} b^-_{pqL_1\ldots L_m0} \,r^{i+j+p+q} \,z_1^{k_1+L_1} \ldots
z_m^{k_m+L_m}  \,J_{(i+p,j+q+2,\phi)},
\end{array}
$$
\begin{align*}
&\int_0^\phi F^+_{2\ell}(s,\z_\nu) \,ds = \sum_{P=0}^{n}\gamma^+_{\ell,
ijk_1\ldots k_m} r^{i+j} z_1^{k_1}\ldots z_m^{k_m}  \,I_{(i,j,\phi)}
\vspace*{0.2cm}\\
&-\frac{1}{r}\sum_{ P=0}^{n} \,\sum_{Q=0}^{n} b^+_{ijk_1\ldots k_m0}
c^+_{\ell,pqL_1\ldots L_m0} r^{i+j+p+q} \,z_1^{k_1+L_1} \ldots z_m^{k_m+L_m}
\,I_{(i+p+1,j+q,\phi)} \vspace*{0.2cm}\\
&+\frac{1}{r}\sum_{P=0}^{n} \,\sum_{Q=0}^{n} a^+_{ijk_1\ldots k_m0}
c^+_{\ell,pqL_1\ldots L_m0} r^{i+j+p+q} \,z_1^{k_1+L_1} \ldots
z_m^{k_m+L_m}  \,I_{(i+p,j+q+1,\phi)},
\end{align*}
\begin{align*}
&\int_\phi^{2\pi} F^-_{2\ell}(s,\z_\nu) \,ds = \sum_{P=0}^{n}\gamma^-_{\ell,
ijk_1\ldots k_m0} r^{i+j} z_1^{k_1}\ldots z_m^{k_m}  \,J_{(i,j,\phi)}
\vspace*{0.2cm}\\
&-\frac{1}{r}\sum_{ P=0}^{n} \,\sum_{ Q=0}^{n} b^-_{ijk_1\ldots k_m0}
c^-_{\ell,pqL_1\ldots L_m0} r^{i+j+p+q}\, z_1^{k_1+L_1} \ldots z_m^{k_m+L_m}
\,J_{(i+p+1,j+q,\phi)} \vspace*{0.2cm}\\
&+\frac{1}{r}\sum_{P=0}^{n} \,\sum_{ Q=0} a^-_{ijk_1\ldots k_m0}
c^-_{\ell,pqL_1\ldots L_m0} r^{i+j+p+q}\, z_1^{k_1+L_1} \ldots
z_m^{k_m+L_m} \,J_{(i+p,j+q+1,\phi)},
\end{align*}
for $1 \leq \ell \leq  m$.

On the other hand when the perturbation is continuous that is, $\phi
=2\pi$, we have
\begin{align*}
& \widetilde{G}_{10}(\nu) = \sum_{\l=m+1}^{d} \Bigg[
\sum_{P=0}^{n-1} r^{i+j}z_1^{k_1} \ldots z_m^{k_m} \,
\vspace*{0.2cm}\\
&\bigg( a^+_{ijk_1\ldots k_m1_{\l}} \int_0^{2\pi}  e^{\mu_\omega s}
\cos^{i+1} s\,\sin^j s \,ds  +  \,b^+_{ijk_1\ldots k_m 1_{\l  }}
\int_0^{2\pi} e^{\mu_\omega s} \cos^i s \sin^{j+1} s \,ds
\big )  \Bigg]  \vspace*{0.2cm}\\
& \Bigg[ \frac{-1}{1-e^{- \mu_\omega 2\pi}} \sum_{Q=0}^{n} r^{i+j}
z_1^{L_1} \ldots z_m^{L_m}~ c^+_{\l,ijL_1\ldots L_m0} \int_0^{2\pi}
e^{-\mu_\omega s} \cos^i s\,\sin^j s\,ds \Bigg],
\end{align*}
and
\begin{align*}
&\int_0^{2\pi} F^+_{20}(s,\z_\nu)\,ds= \sum_{P=0, \,i\,odd, \,j\,even}^{n}
\al^+_{ijk_1\ldots k_m0} r^{i+j} z_1^{k_1} ... \, z_m^{k_m}  \,I_{(i+1,j,2\pi)}\\
&+  \sum_{P=0,\,i\,even, \,j \,odd}^{n} \beta^+_{ijk_1\ldots k_m0}
r^{i+j} z_1^{k_1} ... \, z_m^{k_m}  \,I_{(i,j+1,2\pi)}
\\
&-\frac{1}{r}\sum_{i,j\,even,P=0}^{n}\,\sum_{p,q\,even, Q=0}^{n}
a^+_{ijk_1\ldots k_m0} b^+_{pqL_1\ldots L_m0} r^{i+j+p+q}
z_1^{k_1+L_1} ... \, z_m^{k_m+L_m} I_{(i+p+2,j+q,2\pi)}\\
&-\frac{1}{r}\sum_{ i,j\,odd,P=0}^{n}\,\sum_{p,q\,odd,Q=0}^{n} a^+_{ijk_1
\ldots k_m0} b^+_{pqL_1\ldots L_m0} \,r^{i+j+p+q} z_1^{k_1+L_1} ... \,
z_m^{k_m+L_m} I_{(i+p+2,j+q,2\pi)}\\
&-\frac{1}{r}\sum_{ i\, even,j\,odd,P=0}^{n}\,\sum_{p\, even,q\,odd,Q=0}^{n} a^+_{ijk_1
	\ldots k_m0} b^+_{pqL_1\ldots L_m0} \,r^{i+j+p+q} z_1^{k_1+L_1} ... \,
z_m^{k_m+L_m} I_{(i+p+2,j+q,2\pi)}\\
&-\frac{1}{r}\sum_{ i\, odd,j\,even,P=0}^{n}\,\sum_{p\,odd,q\,even,Q=0}^{n} a^+_{ijk_1
	\ldots k_m0} b^+_{pqL_1\ldots L_m0} \,r^{i+j+p+q} z_1^{k_1+L_1} ... \,
z_m^{k_m+L_m} I_{(i+p+2,j+q,2\pi)}\\
&+\frac{1}{r}\sum_{i,j\,odd, P=0}^{n} \,\sum_{p,q\,even, Q=0}^{n} a^+_{ijk_1
\ldots k_m0} a^+_{pqL_1\ldots L_m0} \,r^{i+j+p+q} z_1^{k_1+L_1} ... \,
z_m^{k_m+L_m} I_{(i+p+1,j+q+1,2\pi)}\\
&+\frac{1}{r}\sum_{i,j\,even, P=0}^{n} \,\sum_{p,q\,odd, Q=0}^{n} a^+_{ijk_1
\ldots k_m0} a^+_{pqL_1\ldots L_m0} \,r^{i+j+p+q} z_1^{k_1+L_1} ... \, z_m^{k_m+L_m}
I_{(i+p+1,j+q+1,2\pi)}\\
&+\frac{1}{r}\sum_{i\, even,j \,odd, P=0}^{n} \,\sum_{p\, odd,q\,even, Q=0}^{n} a^+_{ij
k_1\ldots k_m0} a^+_{pqL_1\ldots L_m0} \,r^{i+j+p+q} z_1^{k_1+L_1} ... \,
z_m^{k_m+L_m}  I_{(i+p+1,j+q+1,2\pi)}\\
&+\frac{1}{r}\sum_{i\, odd,j\,even, P=0}^{n} \,\sum_{p\,even,q\,odd, Q=0}^{n} a^+_{ijk_1
\ldots k_m0} a^+_{pqL_1\ldots L_m0} \,r^{i+j+p+q} z_1^{k_1+L_1} ... \, z_m^{k_m+L_m}
I_{(i+p+1,j+q+1,2\pi)}
\end{align*}
\begin{align*}
&-\frac{1}{r}\sum_{i,j\,odd, P=0}^{n} \,\sum_{p,q\,even, Q=0}^{n} b^+_{ijk_1
	\ldots k_m0} b^+_{pqL_1\ldots L_m0} \,r^{i+j+p+q} z_1^{k_1+L_1} ... \,
z_m^{k_m+L_m} I_{(i+p+1,j+q+1,2\pi)}\\
&-\frac{1}{r}\sum_{i,j\,even, P=0}^{n} \,\sum_{p,q\,odd, Q=0}^{n} b^+_{ijk_1
	\ldots k_m0} b^+_{pqL_1\ldots L_m0} \,r^{i+j+p+q} z_1^{k_1+L_1} ... \, z_m^{k_m+L_m}
I_{(i+p+1,j+q+1,2\pi)}\\
&-\frac{1}{r}\sum_{i\, even,j \,odd, P=0}^{n} \,\sum_{p\, odd,q\,even, Q=0}^{n} b^+_{ij
	k_1\ldots k_m0} b^+_{pqL_1\ldots L_m0} \,r^{i+j+p+q} z_1^{k_1+L_1} ... \,
z_m^{k_m+L_m}  I_{(i+p+1,j+q+1,2\pi)}\\
&-\frac{1}{r}\sum_{i\, odd,j\,even, P=0}^{n} \,\sum_{p\,even,q\,odd, Q=0}^{n} b^+_{ijk_1
	\ldots k_m0} b^+_{pqL_1\ldots L_m0} \,r^{i+j+p+q} z_1^{k_1+L_1} ... \, z_m^{k_m+L_m}
I_{(i+p+1,j+q+1,2\pi)}\\
&+\frac{1}{r}\sum_{i,j\,even, P=0}^{n}\,\sum_{p,q\,even, Q=0}^{n} a^+_{ijk_1\ldots
k_m0} b^+_{pqL_1\ldots L_m0} \,r^{i+j+p+q} z_1^{k_1+L_1} ... \, z_m^{k_m+L_m}
I_{(i+p,j+q+2,2\pi)}\\
&+\frac{1}{r}\sum_{i,j \,odd, P=0}^{n}\,\sum_{p,q\,odd, Q=0}^{n}
a^+_{ijk_1\ldots k_m0} b^+_{pqL_1\ldots L_m0} \,r^{i+j+p+q}
z_1^{k_1+L_1} ... \, z_m^{k_m+L_m}  I_{(i+p,j+q+2,2\pi)}\\
&+\frac{1}{r}\sum_{i\,even,j\,odd, P=0}^{n}\,\sum_{p\,even,q\,odd, Q=0}^{n} a^+_{ijk_1\ldots
	k_m0} b^+_{pqL_1\ldots L_m0} \,r^{i+j+p+q} z_1^{k_1+L_1} ... \, z_m^{k_m+L_m}
I_{(i+p,j+q+2,2\pi)}\\
&+\frac{1}{r}\sum_{i\,odd,j \,even, P=0}^{n}\,\sum_{p\,odd,q\,even, Q=0}^{n}
a^+_{ijk_1\ldots k_m0} b^+_{pqL_1\ldots L_m0} \,r^{i+j+p+q}
z_1^{k_1+L_1} ... \, z_m^{k_m+L_m}  I_{(i+p,j+q+2,2\pi)}.
\end{align*}

\section*{Acknowledgements}

We thank to the referees for their helpful comments and suggestions.

JL  is partially supported by the MINECO/FEDER grants
MTM2016-77278-P and MTM2013-40998-P, and an AGAUR grant
2009SGR-0410. 
DDN is partially supported by FAPESP grant 2018/16430-8 and by CNPq grant 306649/2018-7.
IOZ is partially supported by a
FAPESP grant 2013/21078-8. 
DDN and IOZ are also partially supported by CNPq grant 438975/2018-9. 

\bibliographystyle{abbrv}
\bibliography{references.bib}

\end{document}